\title{Existence of renormalized solutions to elliptic equation \\ in Musielak-Orlicz space}
\author[1]{Piotr Gwiazda\thanks{email address: p.gwiazda@mimuw.edu.pl}}
\author[1]{Iwona Skrzypczak\thanks{email address: iskrzypczak@mimuw.edu.pl}}
 \author[2]{Anna Zatorska--Goldstein\thanks{email address: azator@mimuw.edu.pl,\\  The research of P.G. has been supported by the NCN grant  no. 2014/13/B/ST1/03094. The research of A.Z.-G. has been supported by the NCN grant  no. 2012/05/E/ST1/03232 (years 2013 - 2017).  This work was partially supported by the Simons - Foundation grant 346300 and the Polish Government MNiSW 2015-2019 matching fund.}}
\affil[1]{\small
Institute of  Mathematics, Polish Academy of Sciences, \newline
ul. \'{S}niadeckich 8, 00-656 Warsaw, Poland
}
\affil[2]{\small
Institute of Applied Mathematics and Mechanics,
University of Warsaw, \newline
ul. Banacha 2, 02-097 Warsaw, Poland
}
\newcommand{\barint}{
         \rule[.036in]{.12in}{.009in}\kern-.16in
          \displaystyle\int  }
\def\R{{\mathbb{R}}}
\def\r{{\mathbb{R}}}
\def\N{{\mathbb{N}}}
\def\rn{{\mathbb{R}^{N}}}
\newcommand{\oN}{{\omega_{N}}}
\newcommand{\va} {\vec{a}}
\newcommand{\dep} {\delta}
\def\rp{{\mathbb{R}_{+}}}
\def\cA{{\cal{A}}}
\def\ve{{\varepsilon}}
\def\vr{{\varrho}}
\def\dm{{\underline{m}}}
\def\Mjd{{ M_j^\delta}}
\def\Mss{{(\Mjd(\xi))^{**}}}
\def\Msdx{{(\Mjd(\xi_\delta(x) ))^{**}}}
\def\Msd{{(\Mjd)^{**}}}
\def\ask{{{\cal A }_{s,k}}}
\def\asl{{{\cal A }_{s,l+1}}}
\def\al{{{\cal A }_{l+1}}}
\def\iO{{\int_{\Omega}}}
\def\iQd{{\int_{Q_j^\delta\cap\Omega} }}
\def\Qd{{Q_j^\delta}}
\def\tQd{{\widetilde{Q}_j^\delta}}
\def\iQdn{{\int_{Q_j^\delta\cap\{x:\xi_\delta(x)\neq 0\}} }}
\newtheorem{theo}{\bf Theorem}[section]
\newtheorem{lem}{\bf Lemma}[section]
\newtheorem{rem}{\bf Remark}[section]
\newtheorem{defi}{\bf Definition}[section]
\newtheorem{ex}{\bf Example}[section]
\newtheorem{prop}{\bf Proposition}[section]
\newcommand{\wt}{\widetilde}
\newcommand{\vp}{\varphi}
\newcommand{\dv}{\mathrm{div}}
\date{}
\begin{document}
\maketitle \sloppy

\thispagestyle{empty}


\parindent 1em

\begin{abstract}

We prove existence of renormalized solutions to   general nonlinear elliptic equation in~Musielak-Orlicz space avoiding growth restrictions. Namely, we consider \begin{equation*}
-\dv A(x,\nabla u)= f\in L^1(\Omega),
\end{equation*}
on a Lipschitz bounded domain in $\rn$. The growth of the monotone vector field $A$ is controlled by a generalized nonhomogeneous and anisotropic $N$-function  $M $. The approach does not require any particular type of growth condition of $M$ or its conjugate $M^*$ (neither $\Delta_2$, nor $\nabla_2$). The condition we impose is log-H\"older continuity of $M$, which results in good approximation properties of the space. The proof of the main results uses truncation ideas, the Young measures methods and monotonicity arguments.

\end{abstract}

\smallskip

  {\small {\bf Key words and phrases:}  elliptic problems,  existence of solutions, Musielak-Orlicz spaces, renormalized solutions}

{\small{\bf Mathematics Subject Classification (2010)}:  35J60, 35D30. }
\newpage
\section{Introduction}


Our aim is to find a way of proving the existence of renormalized solutions to a strongly nonlinear elliptic equation with $L^1$-data under minimal restrictions on the growth of the leading part of the operator.  We investigate operators $A$, which are monotone, but not necessarily strictly. The~modular function $M$, which controls the growth of the operator, is not assumed to be isotropic, i.e. $M=M(x,\xi)$ not $M=M(x,|\xi|)$. In turn, we can expect different behaviour of $M(x,\cdot)$ in various directions. We \textbf{do not} require  $M\in\Delta_2$, nor $M^*\in\Delta_2$, nor~any particular growth of $M$, such as $M(x,\xi)\geq c|\xi|^{1+\nu}$ for $\xi>\xi_0$.  The price we pay for relaxing the conditions on the growth is requirement of~log-H\"older-type regularity of the modular function (cf. condition (M)).

We study  the problem
\begin{equation}\label{intro:ell}\left\{\begin{array}{cl}
-\dv A(x,\nabla u)= f &\qquad \mathrm{ in}\qquad  \Omega,\\
u(x)=0 &\qquad \mathrm{  on}\qquad \partial\Omega,
\end{array}\right.
\end{equation}
where $\Omega$ is a bounded Lipschitz domain in $ \rn$, $N>1$, $f:\Omega\to\r$,  $f\in L^1(\Omega)$.

We consider $A$~belonging to an Orlicz class with respect to the second variable. Namely, we assume that function $A:\Omega\times\rn\to\rn$ satisfies the following conditions.
\begin{itemize}
\item[(A1)] $A$ is a Carath\'eodory's function;
\item[(A2)] There exists an $N$-function  $M:\Omega\times\rn\to\r$ and a constant $c_A\in(0,1]$ such that for all $\xi\in\rn$ we have
\[A(x,\xi)\xi\geq c_A\left(M(x,\xi)+M^*(x,A(x,\xi))\right),\]
where $M^*$ is conjugate to $M$ (see Definition~\ref{def:conj});
\item[(A3)] For all $\xi,\eta\in\rn$ and $x\in\Omega$ we have 
\[(A(x,\xi) - A(x, \eta)) \cdot (\xi-\eta)\geq 0.\]
\end{itemize}

Existence of solutions to~\eqref{intro:ell} is considered in
\[ V^M_0 =\{u\in W_0^{1,1}(\Omega):\ \nabla u\in L_M(\Omega;\rn)\}.\]
The space $L_M$ (Definition~\ref{def:MOsp}) is equipped with the modular function $M$ being an $N$-function (Definition~\ref{def:Nf}) controlling the growth of $A$,  cf.~(A2). 

Unlike other studies, instead of growth conditions we assume  regularity of~$M$.
\begin{itemize} 
\item[(M)] Suppose for every measurable set $G\subset\Omega$ and every $z\in\rn$ we have
\begin{equation}
\label{ass:M:int}\int_G M(x,z)dx<\infty.
\end{equation} Let us consider a family of $N$-dimensional cubes   covering the set $\Omega$. Namely, a family $\{\Qd\}_{j=1}^{N_\delta}$ consists of closed cubes of edge $2\delta$, such that  $\mathrm{int}\Qd\cap\mathrm{int} Q^\delta_i=\emptyset$ for $i \neq j$ and $\Omega\subset\bigcup_{j=1}^{N_\delta}\Qd$. Moreover, for each
cube $\Qd$ we define the cube $\tQd$ centered at the same point and with parallel corresponding edges of length $4\delta$.   Assume that there exist constants $a,b,c,\delta_0 >0$, such that for all $\delta<\delta_0$, $x\in\Qd$ and all $\xi\in\rn$ we have
\begin{equation}
\label{M2}  
\frac{M(x,\xi )}{\Mss} 
\leq c \left(1+ |\xi|^{-\frac{a}{\log(b\delta )}} \right),
\end{equation}
where \begin{equation}
\label{Mjd}\Mjd(\xi ):= \inf_{x\in \widetilde{Q}_j^\delta\cap\Omega}M(x,\xi ),
\end{equation} 
while $\Mss=({(\Mjd(\xi))}^*)^* $ is the greatest convex minorant of $\Mjd(\xi )$ (coinciding with the second conjugate cf. Definition~\ref{def:conj}).
\end{itemize}
 In further parts of the paper we describe the cases, when the above condition is not necessary. Let us only point out that to get (M) in the isotropic case, i.e. when we consider $M(x,\xi)=M(x,|\xi|)$, it suffices to assume log-H\"older-type condition with respect to $x$ \eqref{M2'},  cf.~Lemma~\ref{lem:Mass}.

We apply the truncation techniques. Let truncation $T_k(f)(x)$ be defined as follows\begin{equation}T_k(f)(x)=\left\{\begin{array}{ll}f & |f|\leq k,\\
k\frac{f}{|f|}& |f|\geq k.
\end{array}\right. \label{Tk}
\end{equation}

We call a function $u$ a renormalized solution to~\eqref{intro:ell}, when it satisfies the following conditions.\begin{itemize}
\item[(R1)] $u:\Omega\to\r$ is measurable and  for each $k>0$  \[T_k(u)\in V_0^M \cap L^\infty (\Omega),\qquad A(x,\nabla T_k(u))\in L_{M^*}(\Omega;\rn).\]
\item[(R2)] For every $h\in C^1_c(\R)$ and all $\varphi\in V_0^M\cap L^\infty (\Omega)$ we have
\[\int_\Omega A(x,\nabla u)\cdot\nabla(h(u)\varphi)dx=\int_\Omega fh(u)\varphi\,dx.\]
\item[(R3)] $\int_{\{l<|u|<l+1\}}A(x,\nabla u)\cdot\nabla u\, dx\to 0$ as $l\to\infty$.
\end{itemize}

Our main result reads as follows.
\begin{theo}\label{theo:main} Suppose $f\in L^1(\Omega)$, an $N$-function $M$ satisfies assumption (M) and function $A$ satisfies assumptions (A1)-(A3). Then there exists at least one renormalized weak solution to the problem \begin{equation*}
\left\{\begin{array}{cl}
-\dv A(x,\nabla u)= f &\qquad \mathrm{ in}\qquad  \Omega,\\
u(x)=0 &\qquad \mathrm{  on}\qquad \partial\Omega,
\end{array}\right.
\end{equation*} Namely, there exists $u \in V_0^M $ which satisfies (R1)-(R3). Moreover, $A(\cdot,\nabla u)\in L_{M^∗}(\Omega)$.
\end{theo}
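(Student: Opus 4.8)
The plan is to construct the renormalized solution as a limit of solutions to approximate problems with truncated (and mollified) data, and then pass to the limit using a combination of monotonicity arguments, the Young measure method, and careful modular-convergence estimates.

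\medskip

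\noindent\textbf{Step 1: Approximation.} First I would replace $f$ by a sequence $f_n\in L^\infty(\Omega)$ (e.g.\ $f_n=T_n(f)$) with $f_n\to f$ strongly in $L^1(\Omega)$ and $\|f_n\|_{L^1}\le\|f\|_{L^1}$. For each fixed $n$ one solves $-\dv A(x,\nabla u_n)=f_n$ in $V_0^M$. Existence of such a weak solution $u_n$ for bounded right-hand side should follow from a Galerkin/monotone-operator scheme in the reflexive-like setting of the Musielak--Orlicz space, using (A1)--(A3) together with coercivity from (A2); here the good approximation properties guaranteed by (M) (density of smooth functions in the modular topology) are what make the scheme work without $\Delta_2$/$\nabla_2$. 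I would expect this approximate existence to be stated or essentially available from the earlier parts of the paper.

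\medskip

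\noindent\textbf{Step 2: A priori estimates and truncations.} Testing the approximate equation with $T_k(u_n)$ and using (A2) gives, uniformly in $n$,
\[
\int_\Omega \Big(M(x,\nabla T_k(u_n))+M^*(x,A(x,\nabla T_k(u_n)))\Big)\,dx\le \tfrac{k}{c_A}\|f\|_{L^1(\Omega)},
\]
so $\{\nabla T_k(u_n)\}_n$ is bounded in $L_M(\Omega;\rn)$ and $\{A(x,\nabla T_k(u_n))\}_n$ is bounded in $L_{M^*}(\Omega;\rn)$ for every $k$. One also deduces that $\{u_n\}$ is bounded in $W_0^{1,1}(\Omega)$ (via the $N$-function lower bound on $M$ away from $0$), hence, up to a subsequence, $u_n\to u$ a.e.\ and in measure, and $T_k(u_n)\rightharpoonup T_k(u)$ with $\nabla T_k(u_n)\rightharpoonup \nabla T_k(u)$ weakly in $L^1$ (in fact weakly-$*$ in $L_M$, modulo the usual identification). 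The classical Boccardo--Gallou\"et-type argument, adapted to this modular setting, then yields that the level-set energies $\int_{\{l<|u_n|<l+1\}}A(x,\nabla u_n)\cdot\nabla u_n\,dx$ are small uniformly in $n$ as $l\to\infty$, which is the seed of (R3).

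\medskip

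\noindent\textbf{Step 3: Identification of the limit flux via Young measures and monotonicity.} This is the crux. For each $k$, associate to $\nabla T_k(u_n)$ a Young measure $\nu^k_x$ on $\rn$. Using the monotonicity (A3) and a Minty-type argument — testing the difference of the approximate equation with $T_k(u_n)-\varphi$ for suitable $\varphi$, and exploiting that the ``dissipation defect'' concentrates on the level sets where truncation acts — one shows that the weak-$L^1$ limit of $A(x,\nabla T_k(u_n))$ equals $A(x,\nabla T_k(u))$, and moreover that $\nabla T_k(u_n)\to\nabla T_k(u)$ in the modular sense on sets where $|u|\ne k$. The Young measure is the right tool precisely because $A$ is only monotone, not strictly monotone: one concludes that $\nu^k_x$ is supported on the set where $A(x,\cdot)$ agrees with $A(x,\nabla T_k(u)(x))$, which suffices to pass to the limit in all the integrals that appear, even though $\nabla T_k(u_n)$ itself need not converge strongly. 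The delicate point is handling the product $A(x,\nabla T_k(u_n))\cdot\nabla T_k(u_n)$ in the absence of $\Delta_2$: here one uses the modular (not norm) convergence, the biting/Chacon lemma, and the fact — provided by condition (M) — that smooth functions are modularly dense, so that integration-by-parts identities and the monotonicity trick remain valid after regularization.

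\medskip

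\noindent\textbf{Step 4: Passage to the limit in the renormalized formulation.} With $h\in C^1_c(\R)$ fixed, write the approximate equation tested against $h(u_n)\varphi$ for $\varphi\in V_0^M\cap L^\infty(\Omega)$. Because $h$ has compact support, only $\nabla T_k(u_n)$ for $k$ depending on $\mathrm{supp}\,h$ enters, so Step 3 lets us pass to the limit term by term: $\int A(x,\nabla u_n)\cdot h(u_n)\nabla\varphi\to\int A(x,\nabla u)\cdot h(u)\nabla\varphi$ and $\int A(x,\nabla u_n)\cdot h'(u_n)\varphi\,\nabla u_n\to\int A(x,\nabla u)\cdot h'(u)\varphi\,\nabla u$, while $\int f_n h(u_n)\varphi\to\int f h(u)\varphi$ by dominated convergence. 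This yields (R2). Property (R1) follows from the uniform bounds of Step 2 plus the identification $A(x,\nabla T_k(u))\in L_{M^*}$; property (R3) follows from the uniform smallness in Step 2 combined with the (modular) convergence of $\nabla T_l(u_n)$ established in Step 3, letting first $n\to\infty$ and then $l\to\infty$. The final assertion $A(\cdot,\nabla u)\in L_{M^*}(\Omega)$ is read off from (R3) together with (A2), since $\int_\Omega M^*(x,A(x,\nabla u))\,dx\le c_A^{-1}\int_\Omega A(x,\nabla u)\cdot\nabla u\,dx<\infty$.

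\medskip

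\noindent The main obstacle, as indicated, is Step 3: reconciling weak-$L^1$ convergence of the fluxes with merely monotone (not strictly monotone) $A$ and a modular function with no growth control. The Young measure representation together with the log-H\"older regularity (M) — which buys density of smooth functions and hence legitimacy of the monotonicity/integration-by-parts manipulations in the modular topology — is what makes this step go through.
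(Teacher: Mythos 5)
Your overall strategy lines up with the paper's: truncate the data, get a priori modular bounds on $\nabla T_k(u_n)$ and $A(x,\nabla T_k(u_n))$, control the level-set energy via a modular Poincar\'e inequality, identify the limit flux by a Minty-type argument made rigorous through modular density of smooth functions (condition (M)), and handle the product $A\cdot\nabla T$ via Young measures and Chacon's biting lemma. So the skeleton is right. However, there is a concrete inaccuracy in Step~3 that would become a gap if pursued literally.

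You assert that the Young measure $\nu^k_x$ ``is supported on the set where $A(x,\cdot)$ agrees with $A(x,\nabla T_k(u)(x))$.'' That does \emph{not} follow from (A3) alone. The monotonicity argument only yields the vanishing of the \emph{defect},
\[
\int_{\rn}\bigl[A(x,\lambda)-A(x,\nabla T_k(u))\bigr]\cdot\bigl[\lambda-\nabla T_k(u)\bigr]\,d\nu^k_x(\lambda)=0,
\]
hence $[A(x,\lambda)-A(x,\nabla T_k(u))]\cdot[\lambda-\nabla T_k(u)]=0$ on $\operatorname{supp}\nu^k_x$. For merely monotone (non--strictly monotone) $A$ in $\rn$, a zero inner product does not force $A(x,\lambda)=A(x,\nabla T_k(u))$ (think of skew-symmetric perturbations). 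What the paper actually extracts, and what is both true and sufficient, is the weaker \emph{energy identity}
\[
\int_{\rn}A(x,\lambda)\cdot\lambda\,d\nu^k_x(\lambda)=A(x,\nabla T_k(u))\cdot\nabla T_k(u)\quad\text{a.e.},
\]
obtained by combining the vanishing defect with the barycentre relation $\int\lambda\,d\nu^k_x=\nabla T_k(u)$ and the Minty identification of the flux limit. Together with the biting lemma this gives weak-$L^1$ convergence of the scalar products $A(x,\nabla T_{l+1}(u_s))\cdot\nabla T_{l+1}(u_s)$, which is exactly what is needed to establish (R3) and to pass to the limit in (R2). In the paper, the identification $\cA_k=A(x,\nabla T_k(u))$ is obtained first by a direct Minty--Browder trick (with a carefully chosen $\eta=\nabla T_k(u)\mathds{1}_{\Omega_i}+hz\mathds{1}_{\Omega_j}$), and the Young-measure machinery enters only afterwards, for the product; your sketch merges the two and leans on a support statement that is not available.

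A second, smaller point: your claim that $\nabla T_k(u_n)\to\nabla T_k(u)$ ``in the modular sense on $\{|u|\ne k\}$'' is neither needed nor established; weak-$\ast$ convergence in $L_M$ is what the scheme uses. Finally, in Step~4 you cannot test the truncated equation directly against $h(u_n)\varphi$ for general $\varphi\in V_0^M\cap L^\infty$. The paper tests against a doubly regularized function $\psi_l(u_s)\,h(u_r)\,\phi$ with $\phi\in W^{1,\infty}_0$, where $\psi_l$ localizes to the truncation range and $u_r$ is the modular approximation of $u$; only after passing to the limits in $s$, $r$, $l$ (and then invoking the approximation theorem once more) does one obtain (R2) for all admissible $\varphi$. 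These devices are precisely what condition (M) enables, so they should be made explicit rather than absorbed into ``integration-by-parts manipulations remain valid.''
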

\begin{ex} We give below pairs of functions  $M$ and $A$ satisfying conditions (M) and  (A1)-(A3), respectively.\begin{itemize}
\item Consider $M(x,\xi)=|\xi|^{p(x)}$ with log-H\"older $p:\Omega\to[p^-,p^+]$, where $p^-=\inf_{x\in\Omega}p(x)>1$ and $p^+=\sup_{x\in\Omega}p(x)<\infty,$ then $V_0^M=  W_0^{1,p(\cdot)}(\Omega)$ and we admitt $A(x,\xi)=|\xi|^{p(x)-2}\xi$ ($p(\cdot)$-Laplacian case) as well as \[A(x,\xi)=\alpha(x)|\xi|^{p(x)-2}\xi\quad\text{ with  }\quad 0<<\alpha(x)\in L^\infty(\Omega)\cap C(\Omega);\]
\item $M(x,\xi)=\sum_{i=1}^N|\xi_i|^{p_i(x)}$, where $\xi=(\xi_1,\dots,\xi_N)\in\rn$, log-H\"older functions $p_i:\Omega\to[p_i^-,p_i^+]$, $i=1,\dots,N$, where $p_i^-=\inf_{x\in\Omega}p_i(x)>1$ and $p_i^+=\sup_{x\in\Omega}p_i(x)<\infty,$ then $V_0^M= W_0^{1,\vec{p}(\cdot)}(\Omega)$ and we admitt  \[A(x,\xi)=\sum_{i=1}^N\alpha_i(x)|\xi|^{p_i(x)-2}\xi\quad\text{ with }\quad 0<<\alpha_i(x)\in L^\infty(\Omega)\cap C(\Omega).\]
\end{itemize}
\end{ex}

\begin{rem}[cf.~\cite{martin}] \rm When the modular function has a special form we can simplify our assumptions. In the case of $M(x,\xi)=M(x,|\xi|)$, via Lemma~\ref{lem:Mass}, we replace condition (M) in the above theorem by log-H\"older continuity of M, cf.~\eqref{M2'}. If $M$ has a form 
\[M(x,\xi)=\sum_{i=1}^jk_i(x)M_i(\xi)+M_0(x,|\xi|),\quad j\in\N,\]
instead of~whole (M) we assume only that $M_0$ is log-H\"older continuous~\eqref{M2'}, all $M_i$ for $i=1,\dots,j$ are $N$-functions and all $k_i$ are nonnegative and satisfy $\frac{k_i(x)}{k_i(y)}\leq C_i^{\log \frac{1}{|x-y|}}$ with $C_i>0$ for $i=1,\dots,j$.
\end{rem} 

\begin{rem} \rm 
Note that according to (A2) and the Fenchel-Young inequality we have
\[c_A\left(M(x,\xi)+M^*(x,A(x,\xi))\right)\leq A(x,\xi)\xi\leq  M(x,\xi)+M^*(x,A(x,\xi)) \]
satisfied with a certain $N$-function  $M:\Omega\times\rn\to\r$. This observation results in  $A(x,0)=0$. However, the framework admitts considering in (A2)
\[A(x,\xi)\xi\geq c_A\left(M(x,\xi)+M^*(x,A(x,\xi))\right)-k(x),\qquad 0\leq k(x)\in L^1(\Omega),\]
despite it does not imply $A(x,0)=0$.
\end{rem}
 
 The Musielak-Orlicz spaces equipped with the modular function satisfying $\Delta_2$-condition (cf.~Definition~\ref{def:D2}) have strong properties, however there is a vast range of $N$-functions not satisfying it, e.g.
\begin{itemize}
\item[i)] $M(x,\xi)=a(x)\left( \exp(|\xi|)-1+|\xi|\right)$;
\item[ii)] $M(x,\xi)= |\xi_1|^{p_1(x)}\left(1+|\log|\xi||\right)+\exp(|\xi_2|^{p_2(x)})-1$, when $(\xi_1,\xi_2)\in\R^2$ and $p_i:\Omega\to[1,\infty]$. It is a model example to imagine what we mean by anisotropic modular function. 
\end{itemize}
Nonetheless, our assumption that $M,M^*$ are $N$-functions (Definition~\ref{def:Nf})  in the variable exponent setting   restrict us to the case of $1<p_-\leq p(x)\leq p^+<\infty$.

\subsubsection*{State of art}

Existence to problems like~\eqref{intro:ell} is very well understood, when $A$ is independent of the spacial variable and has a polynomial growth. In~particular, there is vast literature for analysis of the case involving the $p$-Laplace operator $A(x,\xi)=|\xi|^{p-2}\xi$ and  problems  stated in the Lebesgue space setting (the modular  function is then $M(x,\xi)=|\xi|^p$). Let us note that  the variable exponent Lebesgue spaces (for $M(x,  \xi ) = | \xi |^{p(x)}$  with $1 < p_{\rm min} \leq p(x) \leq  p^{\rm max} < \infty $) are still reflexive. Despite the~methods of~analysis of problems in this setting are more advanced, they are in the same spirit.

Studies on renormalized solutions comes from DiPerna and Lions~\cite{diperna-lions} investigations on~the~Boltzmann equation.  In the elliptic setting the foundations of the branch were laid by Boccardo et.~al.~\cite{boc-g-d-m},  Dall'Aglio~\cite{dall} and Murat~\cite{murat}, providing results for operators with polynomial growth. Their generalisations to the variable exponent setting can be find in~\cite{andreianov,benboubker,wit-zim}. 

Investigations of nonlinear elliptic boundary value problems in~non-reflexive Orlicz-Sobolev-type setting was initiated by Donaldson~\cite{Donaldson} and continued by Gossez~\cite{Gossez2,Gossez3,Gossez}. For a~summary of~the~results we refer to~\cite{Mustonen} by Mustonen and Tienari. The generalization to the case of~vector Orlicz spaces with possibly anisotropic modular  function,  but independent of spacial variables was investigated in~\cite{Gparabolic}. 

The existence theory for problems in this setting arising  from fluids mechanics is developed from various points of view~\cite{gwiazda-non-newt,gwiazda-tmna,gwiazda2,Aneta}.  For the  recent existence results for elliptic problems we refer  to~\cite{renel2,renel1,Benb,renel3,Dong,fan12,le-ex,gwiazda-ren-ell,gwiazda-ren-cor,hhk,le-ex,liuzhao15}. In~\cite{fan12,hhk,liuzhao15} isotropic, separable and reflexive Musielak-Orlicz spaces are employed,~\cite{Benb} concerns anisotropic variable exponent spaces, \cite{Dong}~studies separable, but not reflexive Musielak-Orlicz spaces, while~\cite{le-ex} anisotropic, but separable and reflexive Orlicz spaces.
 Renormalized solutions to~elliptic problems in~Orlicz spaces are explored in~\cite{renel2,renel1,renel3}, while  in Musielak-Orlicz spaces in~\cite{gwiazda-ren-ell,gwiazda-ren-cor}.

\subsubsection*{Approximation in Musielak-Orlicz spaces} 
The highly challenging part of analysis in the general Musielak-Orlicz spaces is giving a relevant structural condition implying approximation properties of the space. However, we are equipped not only with the weak-* and strong topology of the gradients, but also with the intermediate one, namely - the modular topology.

 In the mentioned existence results even in the case, when the growth conditions imposed on~the~modular  function were given by a~general $N$-function, besides the growth condition on  $M^*$, also $\Delta_2$-condition on $M$ was assummed (which entails separability of~$L_{M^*}$, see~\cite{Aneta}). It results further in density of smooth functions in $L_M$ with respect to the weak-$*$ topology. In the case of~classical Orlicz spaces, the crucial density result was provided by Gossez~\cite{Gossez}. The improvement of this result for the vector Orlicz spaces was given in~\cite{Gparabolic}, while for the $x$--dependent log-H\"{o}lder continuous modular  functions in~\cite{BenkiraneDouieb}, developed in~\cite{GMWK,ASGcoll} and further in~\cite{ASGpara} in the case of log-H\"{o}lder continuous modular functions dependent on $x$, as well as on $t$. 
 
Let us discuss our assumption (M). First we shall stress that it is applied only in the proof of approximation result (Theorem~2.2). When we deal with the space equipped with the approximation properties, we can simply skip (M). Namely, this is the case e.g. of the following modular functions:
\begin{itemize}
\item $M(x,|\xi|)=|\xi|^p+a(x)|\xi|^q$, where $1\leq p<q$ and function $a$ is nonnegative a.e. in $\Omega$ and $a\in L^\infty(\Omega)$, covering the celebrated double-phase case~\cite{min-double-reg};
\item $M(x,\xi)=M_1(\xi)+a(x)M_2(\xi)$, where $M_1,M_2$ satisfy conditions  $\Delta_2$ and $\nabla_2$, moreover a~function $a$ is nonnegative a.e. in $\Omega$ and $a\in L^\infty(\Omega)$.
\end{itemize}
In the both above cases modular approximation sequence obtained in the spirit of Theorem~\ref{theo:approx} can be replaced by existence of a strongly converging  affine combination of the weakly converging sequence (ensured in any reflexive Banach space via Mazur's Lemma).

In the variable exponent case typical assumption resulting in approximation properties of the space is log-H\"older continuity of the exponent. In the isotropic case (when $M(x,\xi)=M(x,|\xi|)$) 
Lemma~\ref{lem:Mass} shows that to get (M), it suffices to   impose on $M$ continuity condition of log-H\"older-type with respect to $x$, namely for each $\xi\in\rn$ and $x,y,$ such that $|x-y|<\frac{1}{2}$ we have\begin{equation}
\label{M2'} \frac{M(x,\xi)}{M(y,\xi)}\leq\max\left\{ |\xi|^{-\frac{a_1}{\log|x-y|}}, b_1^{-\frac{a_1}{\log|x-y|}}\right\},\ \text{with some}\ a_1>0,\,b_1\geq 1.
\end{equation}  Note that condition~\eqref{M2'} 
 for $M(x,\xi)=|\xi|^{p(x)}$ relates to the log-H\"older continuity condition for the variable exponent $p$, namely there exists $a>0$, such that for $x,y$ close enough and each $\xi\in\rn$
\[|p(x)-p(y)|\leq \frac{a}{\log\left(\frac{1}{|x-y|}\right)}.\]
Indeed, 
\[ \frac{M(x,\xi)}{M(y,\xi)}= \frac{|\xi|^{p(x)}}{|\xi|^{p(y)}}=|\xi|^{p(x)-p(y)}\leq |\xi|^\frac{a}{\log\left(\frac{1}{|x-y|}\right)}=|\xi|^{-\frac{a}{\log {|x-y|} }}.\]

  There are several types of understanding generalisation of log-H\"older continuity to the case of general $x$-dependent isotropic modular functions (when $M(x,\xi)=M(x,|\xi|)$). The important issue is the interplay between types of continuity with respect to each of the variables separately. Besides our condition~\eqref{M2'} (sufficient for (M) via Lemma~\ref{lem:Mass}), we refer to the approaches of~\cite{hhk,hht} and~\cite{mmos:ap,mmos2013}, where the authors deal with the modular function of the form $M(x,\xi)=|\xi|\phi(x,|\xi|)$. We proceed without their doubling assumptions ($\Delta_2$). Since we are restricted to bounded domains, condition $\phi(x,1)\sim 1$ follows from our definition of $N$-function (Definition~\ref{def:Nf} ). As for the types of continuity,  in~\cite{mmos:ap,mmos2013} the authors restrict themselves to the case when $\phi(x,|\xi|)\le c \phi(y,|\xi|)$ when $|\xi|\in [1,|x-y|^{-n}].$ This condition implies~\eqref{M2'} and consequently~(M). Meanwhile in~\cite{hhk,hht}, the proposed condition yields  $\phi(x, b|\xi|)\le \phi(y,|\xi|)$ when $\phi(y,|\xi|)\in [1, |x-y|^{-n}],$ which does not imply~\eqref{M2'} directly. However, we shall mention that all three conditions are of the same spirit and balance types of continuity with respect to each of the variables separately.

\subsubsection*{Our approach}
 The challenges resulting from the lack of the growth conditions are significant and require precise handling with general $x$-dependent and anisotropic $N$-functions. The space we deal with is, in~general, neither separable, nor reflexive.  Resigning from imposing $\Delta_2$-condition on the conjugate of the modular function $M$ complicates understanding of the dual pairing. As a further consequence of relaxing growth condition, we cannot use classical results, such as the Sobolev embeddings or~the~Rellich-Kondrachov compact embeddings. We extend the main goal of~\cite{GMWK}, where the authors deal with bounded data.  Lack of~precise control on the growth of~the~ leading part of the operator, together with the low integrability 
   of~the~right-hand side results in noticeable difficulties in studies on convergence.
 
  Besides the refined version of approximation result of~\cite{GMWK} (Theorem~\ref{theo:approx}), we prove general modular Poincar\'{e}-type inequality (Theorem~\ref{theo:Poincare}). The main goal, i.e. the existence of renormalized solutions to  general nonlinear elliptic equation, is given in Theorem~\ref{theo:main}. Our methods leading to~this result are based on the scheme of~\cite{gwiazda-ren-ell,gwiazda-ren-cor}, i.e. we employ truncation arguments, the Minty-Browder monotonicity trick and the Young measures. However, unlike in the latter papers we put regularity restrictions on the modular function instead of the growth conditions.

\section{Preliminaries}
 
In this section we give only the general preliminaries concerning the setting. All necessary definitions and technical tools, as well as an introduction to the setting and general theorems are given in Appendix. 
 
\subsubsection*{Classes of functions}
 
\begin{defi}\label{def:MOsp} Let $M$ be an $N$-function (cf.~Definition~\ref{def:Nf}).\\ We deal with the three  Orlicz-Musielak classes of functions.\begin{itemize}
\item[i)]${\cal L}_M(\Omega;\rn)$  - the generalised Orlicz-Musielak class is the set of all measurable functions $\xi:\Omega\to\rn$ such that
\[\int_\Omega M(x,\xi(x))\,dx<\infty.\]
\item[ii)]${L}_M(\Omega;\rn)$  - the generalised Orlicz-Musielak space is the smallest linear space containing ${\cal L}_M(\Omega;\rn)$, equipped with the Luxemburg norm 
\[||\xi||_{L_M}=\inf\left\{\lambda>0:\int_\Omega M\left(x,\frac{\xi(x)}{\lambda}\right)\,dx\leq 1\right\}.\]
\item[iii)] ${E}_M(\Omega;\rn)$  - the closure in $L_M$-norm of the set of bounded functions.
\end{itemize}
\end{defi}
Then 
\[{E}_M(\Omega;\rn)\subset {\cal L}_M(\Omega;\rn)\subset { L}_M(\Omega;\rn),\]
the space ${E}_M(\Omega;\rn)$ is separable and $({E}_M(\Omega;\rn))^*=L_{M^*}(\Omega;\rn)$, see~\cite{gwiazda-non-newt,Aneta}.

Under the so-called $\Delta_2$-condition (Definition~\ref{def:D2}) we would be equipped with stronger tools. Indeed,  if $M\in\Delta_2$, then
\[{E}_M(\Omega;\rn)= {\cal L}_M(\Omega;\rn)= {L}_M(\Omega;\rn)\]
and $L_M(\Omega;\rn)$ is separable. When both  $M,M^*\in\Delta_2$, then $L_M(\Omega;\rn)$ is separable and reflexive, see~\cite{GMWK,gwiazda-non-newt}. We face the problem without this structure.

\begin{rem} Definition~\ref{def:Nf} (see points 3 and 4)  implies 
$\lim_{|\xi|\to \infty}\inf_{x\in\Omega}\frac{M^*(x,\xi)}{|\xi|}=\infty$ and 
  $\inf_{x\in\Omega}M^*(x,\xi)>0$ for any $\xi\neq 0$. Then, consequently, Lemma~\ref{lem:M*<M} ensures 
\begin{equation}
\label{LinfinLM}L^\infty(\Omega;\rn) \subset L_M(\Omega;\rn).\end{equation} 
\end{rem} 

\subsubsection*{Comments on assumptions on $A$ }

The following consideration explains how condition (A2) settles growth and coercivity condition on the leading part of the operator.

In the standard $L^p$-setting it is enough to note that (A2) implies directly \[A(x,\xi)\xi\geq c_A |\xi|^p\] and $|A(x,\xi)|\cdot|\xi|\geq \wt{c}_A |A(x,\xi)|^{p'},$ leading further to the condition \[\wt{c}_A |\xi|^{p-1} \geq  |A(x,\xi)|.\]

In the nonstandard growth setting, considering the first counterpart of the above condition, i.e.
\begin{equation}
\label{Adown}A(x,\xi)\xi\geq c_A M(x,\xi),
\end{equation} we get the minimal growth. As for the bound from above, we define an increasing function $P:\R\cup\{0\}\to\R\cup\{0\}$ by the following formula 
\[P(s):=\sup_{\xi:\ |\xi|=s}\left(\inf_{x\in\Omega, } M^*(x, \xi ) \right)^*.\] 
 Notice that for every $x\in\Omega$ and $\xi\in\rn$ such that $|\xi|=s$ it holds $ P(s) \geq M(x,\xi)$.  Moreover, we have an upper bound for the growth of the operator
\begin{equation}
\label{Aup}
 |A(x,\xi)| \leq 2 (P^*)^{-1}\left(\frac{1}{c_A}P\left(\frac{2}{c_A}|\xi |\right)\right) .
\end{equation} 
Indeed, to prove
\[c_A P^*\left(\frac{1}{2}|A(x,\xi)|\right) \leq  P\left(\frac{2}{c_A}|\xi | \right)\] it suffices to notice that
Fechel-Young inequality~\eqref{inq:F-Y} yields
\[A(x,\xi)\xi\leq P\left(\frac{2}{c_A}|\xi|\right)+P^*\left(\frac{c_A}{2}|A(x,\xi)|\right)\leq P\left(\frac{2}{c_A}|\xi|\right)+c_A P^*\left(\frac{1}{2}|A(x,\xi)|\right),\] 
whereas on the other hand
\[A(x,\xi)\xi\geq  {c_A} M^* (x,A(x,\xi))\geq c_A P^*\left(|A(x,\xi)|\right)\geq 2c_A P^*\left(\frac{1}{2}|A(x,\xi)|\right).\]
Conditions of this form are considered in classical Orlicz setting, when $M(x,\xi)=M(|\xi|)$ by e.g.~\cite{Gossez,Mustonen}. Note that then we can take $P(s)=M(s)$. Since (A2) implies~\eqref{Adown} and~\eqref{Aup}, we assume particular growth and coercivity of the leading part of the operator corresponding to the modular function of the space, where the solutions are defined. Nonetheless, conditions~\eqref{Adown} and~\eqref{Aup} are not sufficient in our approach. Note that they do not ensure that the operator and the solution are in the proper dual spaces. Let us stress further that the consequences of (A2) are  expressed by $N$-functions of general type of growth.

\subsubsection*{Main tools}

The existence of solutions to the truncated problem follows directly from~\cite[Theorem~1.5]{GMWK}.

\begin{theo}[Existence with bounded data, cf.~\cite{GMWK}]\label{theo:boundex} Suppose $g\in L^\infty(\Omega)$, an $N$-function $M$ satisfies assumption (M) and function $A$ satisfies assumptions (A1)-(A3). Then there exists a weak solution to the problem \[
\left\{\begin{split}
-\dv A(x,\nabla u)= g &\qquad \mathrm{ in}\qquad  \Omega,\\
u(x)=0 &\qquad \mathrm{  on}\qquad \partial\Omega,
\end{split}\right.
\] Namely, there exists $u \in W_0^{1,1} (\Omega)$ such that $\nabla u \in L_M (\Omega)$ satisfies
\[\int_\Omega A(x, \nabla u) \cdot \nabla \varphi dx =
\int_\Omega g\varphi\,dx,\]
for all $\varphi \in C_0^\infty(\Omega)$. Moreover, $A(\cdot,\nabla u)\in L_{M^∗}(\Omega)$.
\end{theo}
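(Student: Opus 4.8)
The plan is to treat Theorem~\ref{theo:boundex} by a Galerkin approximation combined with a Young-measure/monotonicity identification of the flux, carried out entirely in the modular topology so as to bypass the lack of reflexivity and of $\Delta_2$. (One can, of course, simply invoke \cite[Theorem~1.5]{GMWK} after checking that assumptions (A1)--(A3) and (M) match its hypotheses; what follows is how I would reconstruct that argument.) Fix a sequence $\{w_i\}_{i\in\N}\subset C^\infty_c(\Omega)$ whose span is dense in $C^\infty_c(\Omega)$ for the relevant topology, put $V_n=\mathrm{span}\{w_1,\dots,w_n\}$, and seek $u_n\in V_n$ with $\iO A(x,\nabla u_n)\cdot\nabla w_i\,dx=\iO g\,w_i\,dx$ for $i=1,\dots,n$. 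Existence of $u_n$ follows from Brouwer's fixed-point theorem applied to the continuous (by (A1)) self-map of $V_n$ built from the left-hand side: the needed coercivity on large balls of $V_n$ comes from (A2), which gives $\iO A(x,\nabla v)\cdot\nabla v\,dx\ge c_A\iO M(x,\nabla v)\,dx$, together with the modular Poincar\'e inequality (Theorem~\ref{theo:Poincare}) and the crude bound $\iO g\,v\,dx\le\|g\|_{L^\infty}\|v\|_{L^1}$.

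Next come the a priori estimates. Testing the Galerkin identity with $u_n$ and using (A2) yields
\[
c_A\iO\big(M(x,\nabla u_n)+M^*(x,A(x,\nabla u_n))\big)\,dx\le\iO g\,u_n\,dx\le\|g\|_{L^\infty}\|u_n\|_{L^1},
\]
and, combining this with Theorem~\ref{theo:Poincare} and the Fenchel--Young inequality to absorb the right-hand side, one gets uniform bounds on $\iO M(x,\nabla u_n)\,dx$ and on $\iO M^*(x,A(x,\nabla u_n))\,dx$; in particular $\{\nabla u_n\}$ is bounded in $L_M(\Omega;\rn)$, $\{A(x,\nabla u_n)\}$ is bounded in $L_{M^*}(\Omega;\rn)$, and $\{u_n\}$ is bounded in $W^{1,1}_0(\Omega)$ since $L_M\hookrightarrow L^1$. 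As $W^{1,1}_0(\Omega)\hookrightarrow L^1(\Omega)$ is compact, after extracting a subsequence $u_n\to u$ strongly in $L^1(\Omega)$ and a.e., $\nabla u_n\rightharpoonup\nabla u$ weakly-$*$ in $L_M$, and $A(x,\nabla u_n)\rightharpoonup\overline A$ weakly-$*$ in $L_{M^*}$ for some $\overline A$; weak-$*$ lower semicontinuity of the modular then gives $\overline A\in L_{M^*}(\Omega;\rn)$ and $\nabla u\in L_M(\Omega;\rn)$. Passing to the limit in the Galerkin identity and using the modular density of smooth functions (Theorem~\ref{theo:approx}) to enlarge the class of admissible test functions, we obtain $\iO\overline A\cdot\nabla\varphi\,dx=\iO g\,\varphi\,dx$ for all $\varphi\in C^\infty_c(\Omega)$.

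The remaining and hardest point is the identification $\overline A=A(\cdot,\nabla u)$, where monotonicity (A3) and the Young measure enter. Let $\nu=(\nu_x)_{x\in\Omega}$ be the Young measure generated by $\{\nabla u_n\}$; the uniform $L_{M^*}$ modular bound together with a de la Vall\'ee Poussin-type criterion gives equi-integrability of $\{A(x,\nabla u_n)\}$, so that $\overline A(x)=\int_{\rn}A(x,\lambda)\,d\nu_x(\lambda)$ and $\nabla u(x)=\int_{\rn}\lambda\,d\nu_x(\lambda)$ a.e. Testing the Galerkin identity with $u_n$ and letting $n\to\infty$ gives the energy convergence $\iO A(x,\nabla u_n)\cdot\nabla u_n\,dx\to\iO g\,u\,dx=\iO\overline A\cdot\nabla u\,dx$, which in Young-measure form reads $\iO\int_{\rn}A(x,\lambda)\cdot\lambda\,d\nu_x(\lambda)\,dx=\iO\int_{\rn}\int_{\rn}A(x,\lambda)\cdot\mu\,d\nu_x(\mu)\,d\nu_x(\lambda)\,dx$. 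Subtracting, the monotone and nonnegative integrand $(A(x,\lambda)-A(x,\mu))\cdot(\lambda-\mu)$ has zero integral against $\nu_x\otimes\nu_x\otimes dx$, hence vanishes a.e.; a Minty-type argument on the barycenter (testing monotonicity of $A(x,\cdot)$ against $\nabla u(x)$) then yields $\int_{\rn}A(x,\lambda)\,d\nu_x(\lambda)=A(x,\nabla u(x))$ a.e., i.e. $\overline A=A(\cdot,\nabla u)$. Inserting this into the limit equation produces the desired weak solution, and the modular bound on $\{A(x,\nabla u_n)\}$ with weak-$*$ lower semicontinuity gives $A(\cdot,\nabla u)\in L_{M^*}(\Omega;\rn)$. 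The two places I expect to need the most care are: (i) ensuring the Galerkin family is dense enough that the limit equation holds on all of $C^\infty_c(\Omega)$ — this is precisely the role of the refined approximation theorem (Theorem~\ref{theo:approx}), which here replaces the separability/reflexivity one would otherwise exploit; and (ii) the flux identification, where the absence of \emph{strict} monotonicity in (A3) rules out a plain Minty--Browder argument and forces the Young-measure route, with the additional subtlety that all the products above must be shown integrable without invoking $\Delta_2$ on $M$ or $M^*$.
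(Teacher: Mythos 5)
Your proposal takes a genuinely different route from the paper's. The paper's proof of Theorem~\ref{theo:boundex} is essentially a citation: it invokes \cite[Theorem~1.5]{GMWK} and then shows that the hypotheses match, namely that every $g\in L^\infty(\Omega)$ can be written as $g=\dv F$ with $F\in E_{M^*}(\Omega;\rn)$ (existence of some $F$ with $\dv F=g$ is elementary; that $F$ can be taken in $E_{M^*}$ follows from properties of the Bogovski\u{i} operator). You instead attempt to reconstruct the Galerkin/Young-measure argument that one would expect to find \emph{inside} the cited result. That is a legitimate thing to do, and indeed the overall architecture (Galerkin approximation, a priori bounds in $L_M$ and $L_{M^*}$ via (A2), weak-$*$ extraction, Young-measure identification of the flux through monotonicity) is the right shape for this class of problems and is consonant with the techniques the paper uses in its own main theorem. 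The paper's route is shorter and delegates the hard analysis; yours is self-contained but therefore carries the burden of closing all the analytic steps.

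The main gap in your sketch is the energy identity $\int_\Omega g\,u\,dx=\int_\Omega \ol A\cdot\nabla u\,dx$, which you assert in one breath but which is not free. After passing to the limit in the Galerkin identity you only know $\int_\Omega \ol A\cdot\nabla\varphi\,dx=\int_\Omega g\,\varphi\,dx$ for $\varphi\in C_c^\infty(\Omega)$. To insert $\varphi=u$ you need to approximate $u$ modularly by smooth functions, and Theorem~\ref{theo:approx} gives such an approximating sequence only for $\varphi\in V_0^M\cap L^\infty(\Omega)$. You have not established $u\in L^\infty(\Omega)$ (for bounded data this requires a separate De~Giorgi/Stampacchia-type or truncation argument that is not automatic in the anisotropic, non-$\Delta_2$ setting), nor have you introduced truncations $T_k(u)$ with a subsequent passage $k\to\infty$ to circumvent it. Without one of these devices the energy convergence — and hence the whole monotonicity/Young-measure identification of $\ol A$ with $A(\cdot,\nabla u)$ — does not close. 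A secondary point: your passage from $\int_\Omega A(x,\nabla u_n)\cdot\nabla u_n\,dx$ to its Young-measure representation $\int_\Omega\int_{\rn}A(x,\lambda)\cdot\lambda\,d\nu_x(\lambda)\,dx$ needs more than equi-integrability of $\{A(x,\nabla u_n)\}$ alone; the nonnegative product $A(x,\nabla u_n)\cdot\nabla u_n$ a priori only converges in the biting sense, and one must use the combination of the biting limit with the matching upper bound on the integrals (as in Theorem~\ref{theo:bitinglemma}) to upgrade this to weak $L^1$ convergence. You gesture at this but the step deserves to be made explicit, since it is exactly the point where the absence of $\Delta_2$ on $M$ and $M^*$ bites.
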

 
In fact, \cite[Theorem~1.5]{GMWK} is proven under the assumption that there exists $F:\Omega\to\rn$, such that $g= \dv F$ and $F\in E_{M^*}(\Omega)$. Nevertheless, each bounded $g$ is of this form. Existence of such $F$ is clear, while the fact that $F\in E_{M^*}(\Omega)$ is a consequence of properties of the Bogovski operator, see e.g.~[\cite{NavierStokes}, Lemma II.2.1.1]. 

\bigskip

The following  refined approximation result of~\cite[Theorem~2.7]{GMWK} being an~improvement of the case from~\cite{BenkiraneDouieb} is proven in Appendix. 
\begin{theo}[Approximation theorem]\label{theo:approx} 
Let $\Omega$ be a Lipschitz domain and an $N$-function $M$~satisfy condition (M). Then for any $\vp$ such that $\vp\in V_0^M\cap L^\infty(\Omega)$  there exists a sequence $\{\vp_\delta\}_{\delta>0}\in C_0^\infty(\Omega)$ converging modularly to $\vp$, i.e. such that $\nabla\vp_\delta\xrightarrow[]{M}\nabla \vp$.
\end{theo}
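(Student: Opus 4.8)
The plan is to prove the Approximation Theorem by a localization-and-mollification procedure adapted to the cube decomposition built into condition (M). First I would reduce to a model situation: given $\vp \in V_0^M \cap L^\infty(\Omega)$, extend $\vp$ by zero outside $\Omega$ (legitimate since $\vp \in W_0^{1,1}$) and, using the Lipschitz character of $\partial\Omega$, perform a standard partition-of-unity argument so that it suffices to approximate a function supported near a boundary chart where $\partial\Omega$ is a graph; there one translates $\vp$ slightly into the domain so that the (translated) support sits at positive distance from $\partial\Omega$, then mollifies. The translation step must be shown not to destroy modular convergence of the gradient, which uses only continuity of translations in $L^1$ together with the boundedness of $\nabla\vp$ in $L^\infty$ on the relevant set and the integrability hypothesis \eqref{ass:M:int}; note $\nabla\vp$ need \emph{not} be in $L^\infty$, but $\vp \in L^\infty$ and $\nabla\vp \in L_M$, so the modular $\int_\Omega M(x, \lambda \nabla\vp)\,dx$ is finite for small $\lambda$ and that is what we propagate.

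The heart of the argument is the mollification estimate on the cube grid $\{\Qd\}$. Fix the scale $\delta$, let $\vp_\delta = \vp * \rho_\delta$ with $\rho_\delta$ a standard mollifier of radius comparable to $\delta$ (shrunk appropriately so $\vp_\delta \in C_0^\infty(\Omega)$ after the boundary translation). For $x \in \Qd$, the value $\nabla\vp_\delta(x)$ depends only on $\nabla\vp$ restricted to $\tQd$. The plan is to estimate, for a fixed $\lambda > 0$ small enough that the relevant modular of $\nabla\vp$ is finite,
\begin{equation*}
\iO M\bigl(x, \lambda(\nabla\vp_\delta - \nabla\vp)\bigr)\,dx = \sum_{j=1}^{N_\delta} \iQd M\bigl(x, \lambda(\nabla\vp_\delta - \nabla\vp)\bigr)\,dx,
\end{equation*}
and on each cube to replace $M(x,\cdot)$ from above, via \eqref{M2}, by a multiple of the convexified infimum $\Msd$ evaluated at the argument, picking up the correction factor $c(1 + |\lambda(\nabla\vp_\delta - \nabla\vp)(x)|^{-a/\log(b\delta)})$. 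Because $\Msd$ is convex and $x$-independent on $\tQd$, Jensen's inequality lets one push the convolution inside: $\Msd(\lambda \nabla\vp_\delta(x)) \le (\rho_\delta * \Msd(\lambda\nabla\vp))(x)$, and a similar convexity/Jensen bound controls $\Msd$ of the difference by the local oscillation of $\nabla\vp$ against its mollification. Summing over $j$ and using that the $\tQd$ have bounded overlap returns a global quantity of the form $\int M$-type that tends to zero as $\delta \to 0$ by the $L^1$-type continuity of mollification, \emph{provided} the extra factor $|\cdot|^{-a/\log(b\delta)}$ does not blow up.

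Controlling that singular factor is the main obstacle, and it is exactly where the logarithmic scaling of (M) is designed to save the day. The exponent $-a/\log(b\delta) \to 0^+$ as $\delta \to 0$, so on the region where $|\nabla\vp_\delta - \nabla\vp|$ is bounded below, the factor is close to $1$; the danger is the region where this difference is small, where $|t|^{-a/\log(b\delta)} \to \infty$ as $t \to 0$. One splits the cube integral accordingly: where $|\lambda(\nabla\vp_\delta-\nabla\vp)(x)| \le 1$ one bounds $\Msd$ of that small argument by $\Msd$ of something of size $\delta^{\kappa}$ for a suitable power (using that $\nabla\vp \in L^1$ and a measure estimate on the bad set), so that the product with the $\delta^{-a/\log(b\delta)} = e^{-a}$-order factor — here is the key point, $\delta^{1/\log(b\delta)}$ is essentially constant — stays summable and small. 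This is the technical core inherited and refined from \cite{GMWK, BenkiraneDouieb}: the log-H\"older scaling makes a power of $\delta$ precisely cancel the logarithmically-growing exponent. The remaining steps — verifying $\vp_\delta \in C_0^\infty(\Omega)$, a diagonal choice of $\lambda \to 1$ (or rather, running the estimate for each fixed $\lambda$ and concluding modular convergence of $\nabla\vp_\delta \to \nabla\vp$ directly from the definition), and assembling the boundary-chart pieces — are routine given the interior estimate. I would expect the write-up to devote most of its length to the split of the bad set and the bookkeeping of the $\delta$-powers against $1/\log(b\delta)$.
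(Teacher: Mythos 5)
Your plan overlaps with the paper in several structural respects (mollification, reduction to nice subdomains, application of condition (M) on the cube grid, and Jensen's inequality against $\Msd$), but the core of your argument — the direct modular estimate of $\nabla\vp_\delta-\nabla\vp$ combined with a bad-set splitting — is not what the paper does, and the way you propose to handle the singular factor contains a sign error that makes the splitting unnecessary in one regime and unhelpful in the other.

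Concretely: for $\delta$ small (indeed for $b\delta<1$, which is the only relevant regime and is the one arranged in Lemma~\ref{lem:Mass}), the exponent $-a/\log(b\delta)$ is a small \emph{positive} number. Thus $|t|^{-a/\log(b\delta)}\to 0$ as $t\to 0$, not $\infty$; the factor in \eqref{M2} is bounded by $2c$ wherever the argument is $\le 1$, and the actual danger is the region where the argument is \emph{large}, not small. Your "bad set where the difference is small" is in fact harmless. But in the dangerous regime $|\nabla\vp_\delta-\nabla\vp|$ large, your estimate has no leverage: $\nabla\vp$ is only in $L_M(\Omega;\rn)$, not in $L^\infty$, so there is no uniform pointwise bound on the un-mollified term of the difference, and the factor $|\lambda(\nabla\vp_\delta-\nabla\vp)|^{-a/\log(b\delta)}$ cannot be controlled by a cheap power of $\delta$ alone. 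The paper's proof sidesteps this entirely. It first proves (Step 2 of Appendix C) the operator bound
\begin{equation*}
\iO M(x,\xi_\delta(x))\,dx \le C \iO M(x,\xi(x))\,dx
\end{equation*}
for the shrink-and-mollify operator $\xi_\delta(x)=\int\varrho_\delta(x-y)\xi(\kappa_\delta y)\,dy$, and the essential point there is a pointwise bound from \emph{above}: since $\|\xi\|_{L^1}\le 1$, $|\xi_\delta(x)|\le 2K/\delta^N$, and $(\delta^N)^{a/\log(b\delta)}=\exp(aN\log\delta/\log(b\delta))$ stays bounded as $\delta\to 0$. Only the mollified quantity is ever fed into the singular factor, and its $L^\infty$ bound is precisely what the log-H\"older scaling cancels. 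Then (Step 3) convergence is obtained not by estimating the difference directly, but by the density of integrable simple functions in $L_M$ in the modular topology (Lemma~\ref{lem:dens}): one picks a simple $E^n$ with $\int M(x,(E^n-\nabla\vp)/(\lambda/3))\,dx$ small, writes $(\nabla\vp)_\delta-\nabla\vp=[(\nabla\vp-E^n)_\delta]+[(E^n)_\delta-E^n]+[E^n-\nabla\vp]$, controls the first bracket by the Step-2 operator bound applied to $\nabla\vp-E^n$, the last by the choice of $E^n$, and the middle by dominated convergence, which is available precisely because $E^n$ is bounded. That detour through simple functions is what makes the proof close without any splitting of bad sets.

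So the gap in your proposal is twofold: you identify the wrong dangerous regime for the factor $|\cdot|^{-a/\log(b\delta)}$, and, once the regime is corrected, the direct estimate you propose has no handle on $\nabla\vp$ itself. Supplying the density-of-simple-functions step (or some equivalent reduction to bounded $\xi$) is not a bookkeeping detail but the key idea that makes the approach succeed.
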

The vital tool in our study is the following modular Poincar\'{e}-type inequality. The proof is also included in Appendix.
\begin{theo}[Modular Poincar\'e inequality]\label{theo:Poincare} 
Let $m:\rp\to\rp$ be an arbitrary function satisfying $\Delta_2$-condition and $\Omega\subset\rn$ be a bounded domain,
then there exist $c=c(\Omega,N,m)>0$ such that for every $g\in W^{1,1}(\Omega)$, such that $\int_\Omega m(|\nabla g|)dx<\infty$, we have
\[\int_\Omega m(|g|)dx\leq c
\int_\Omega m(|\nabla g|)dx.\]

\end{theo}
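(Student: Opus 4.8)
The plan is to reduce the estimate to a coordinate box and then slice in one direction, using the one-dimensional fundamental theorem of calculus. First I would read the hypothesis as $g\in W_0^{1,1}(\Omega)$ extended by zero to $\R^N$ (a nonzero constant shows the bound cannot hold for general $W^{1,1}$, and $W_0^{1,1}$ is the only case used in the applications). After a translation we may assume $\Omega\subset Q:=(0,\ell)^N$ with $\ell:=\operatorname{diam}\Omega$, since the orthogonal projection of $\Omega$ onto each coordinate axis has length at most $\operatorname{diam}\Omega$. The zero extension lies in $W_0^{1,1}(Q)$ and has $\nabla g=0$ a.e.\ on $Q\setminus\Omega$, so it suffices to prove the inequality with $\Omega$ replaced by $Q$. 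For a.e.\ $x'\in(0,\ell)^{N-1}$ the slice $t\mapsto g(t,x')$ is absolutely continuous on $[0,\ell]$ and vanishes at $t=0$, so that
\[
|g(x_1,x')|\ \le\ \int_0^\ell |\nabla g(t,x')|\,dt\qquad\text{for a.e.\ }(x_1,x')\in Q .
\]

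Next I would apply $m$ to this bound and remove the factor $\ell$. Put $H(x'):=\frac1\ell\int_0^\ell|\nabla g(t,x')|\,dt$, so that $|g(x_1,x')|\le \ell\,H(x')$ for a.e.\ $x_1$; since $m$ is non-decreasing, $m(|g(x_1,x')|)\le m(\ell\,H(x'))$. To dispose of the dilation I use $\Delta_2$: if $\ell\le1$ then $m(\ell s)\le m(s)$ by monotonicity, while if $\ell>1$ then choosing $k\in\N$ with $2^{k}\ge\ell$ and iterating $m(2s)\le C\,m(s)$ gives $m(\ell s)\le m(2^k s)\le C^{k}m(s)=:K\,m(s)$, with $K=K(\ell,C)=K(\Omega,m)$. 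Then Jensen's inequality for the convex function $m$ and the probability measure $\ell^{-1}\,dt$ on $[0,\ell]$ yields $m(H(x'))\le \frac1\ell\int_0^\ell m(|\nabla g(t,x')|)\,dt$. Chaining these,
\[
m(|g(x_1,x')|)\ \le\ \frac{K}{\ell}\int_0^\ell m(|\nabla g(t,x')|)\,dt ,
\]
and the right-hand side is independent of $x_1$.

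It then remains to integrate. Integrating the last inequality in $x_1$ over $(0,\ell)$ cancels the factor $\ell^{-1}$, and integrating in $x'\in(0,\ell)^{N-1}$ together with Tonelli's theorem gives
\[
\int_Q m(|g(x)|)\,dx\ \le\ K\int_Q m(|\nabla g(x)|)\,dx .
\]
Because $g=0$ and $\nabla g=0$ a.e.\ outside $\Omega$, both sides coincide with the corresponding integrals over $\Omega$, which proves the claim with $c=K=K(\Omega,N,m)$. The assumption $\int_\Omega m(|\nabla g|)\,dx<\infty$ is used only to make the right-hand side finite, while $g\in W^{1,1}$ is what legitimizes the slicing representation.

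The step I expect to require the most care is turning the doubling property $m(2s)\le C\,m(s)$ into a bound on the arbitrary dilation $m(\ell s)$ uniformly in $g$: this is precisely where $\Delta_2$ enters and where the dependence of $c$ on $\Omega$ originates, and if $\Delta_2$ is only assumed for large arguments one must additionally handle small $s$ (harmless on a bounded domain, but it costs a line). I would also flag that the Jensen step uses convexity of $m$; the $\Delta_2$-condition alone does not yield the estimate — it fails, for instance, for $m(s)=\sqrt s$ — so ``$m$ satisfying $\Delta_2$'' must be understood within the class of convex (Young) functions relevant to the paper, and the reduction by zero extension tacitly uses $g\in W_0^{1,1}(\Omega)$.
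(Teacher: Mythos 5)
Your proof is correct and arrives at the same conclusion, but it is genuinely leaner than the paper's. The paper slices along the diagonal direction $\omega_N=(1,\dots,1)$ and writes $2u(x)=\int_{-1/2}^{1/2}\sum_j\partial_j u(x+s\omega_N)\,ds$; you slice along a single coordinate axis, which avoids the $\sqrt N$ bookkeeping and is a bit cleaner. More substantially, the paper first proves the inequality for $u\in C_0^\infty$ (Step~1), then upgrades it to $u\in V_0^m$ by mollifying, invoking Jensen for the convolution, checking uniform integrability, and applying Vitali's theorem (Step~2), and finally rescales to a general bounded domain (Step~3). You bypass the entire approximation step by extending $g$ by zero and using the a.e.-absolute-continuity (ACL) characterization of $W^{1,1}$ functions along lines, which directly gives the one-dimensional fundamental theorem of calculus for a.e.\ slice; the rescaling is absorbed at the outset by placing $\Omega$ in a box of edge $\operatorname{diam}\Omega$. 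Both arguments dispose of the resulting dilation constant by iterating $\Delta_2$, and both rely on Jensen's inequality. Your two flags are well taken and apply equally to the paper's version: the conclusion is false for arbitrary $g\in W^{1,1}(\Omega)$ (take a nonzero constant), so the hypothesis must be $g\in W_0^{1,1}(\Omega)$ — indeed the paper's own proof starts from $C_0^\infty$ and only ever treats $V_0^m$, so this is a slip in the theorem's statement rather than in its proof; and the Jensen step needs $m$ convex (with $m(0)=0$ when you restrict from $Q$ back to $\Omega$), which is not implied by $\Delta_2$ alone, so the word ``arbitrary'' in the hypothesis should be read as ``arbitrary convex Young function.''
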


\section{The main proof}

\begin{proof}[Proof of Theorem~\ref{theo:main}] The proof is divided into several steps.

\medskip

\textbf{Step 1. Truncated problem.} Existence to a truncated problem 
\begin{equation}
\label{prob:trunc}\left\{\begin{split}
-\dv A(x,\nabla u_s)= T_s(f) &\qquad \mathrm{ in}\qquad  \Omega,\\
u_s(x)=0 &\qquad \mathrm{  on}\qquad \partial\Omega,
\end{split}\right.
\end{equation}
for $s>0$ is a direct consequence of Theorem~\ref{theo:boundex} with $g=T_s(f)$ (truncation $T_s$ comes from~\eqref{Tk}).

\medskip

\textbf{Step 2. A priori estimates.} In order to obtain uniform integrability of sequences $\{A(x,\nabla T_k(u_s))\}_{s>0}$ and $\{\nabla T_k(u_s)\}_{s>0}$ we need to obtain the following a priori estimates.

For $u_s$ being a weak solution to~\eqref{prob:trunc}, $s>0$ and $f\in L^1(\Omega)$, we have the following estimates for any $k>0$
\begin{eqnarray}
\int_\Omega M(x, \nabla T_k(u_s))dx&\leq& c k \|f\|_{L^1(\Omega)},\label{Mapriori}\\
\int_\Omega M^*(x, A(x,\nabla T_k(u_s)))dx&\leq& c k \|f\|_{L^1(\Omega)},\label{M*apriori}
\end{eqnarray}
where the constant $c$ depends only on the growth condition~(A2).

Indeed, considering $(T_k(u_s))_\delta$ -- a sequence approximating $T_k(u_s)$ as in Theorem~\ref{theo:approx}, we get
\begin{multline*}
\int_\Omega A(x, \nabla T_k(u_s))\nabla T_k(u_s)dx=\lim_{\delta\to 0}\int_\Omega A(x, \nabla T_k(u_s))\nabla T_k(u_s)dx= \\
=\lim_{\delta\to 0}\int_\Omega T_s(f) ( T_k(u_s))_\delta dx=\int_\Omega T_s(f)  T_k(u_s)dx.
\end{multline*}
We observe that due to Assumption (A2) we have
\begin{multline*}\int_\Omega c_A\left(M(x,\nabla T_k(u_s))+M^*(x,A(x, \nabla T_k(u_s)))\right)dx\leq\\\leq \int_\Omega A(x,\nabla T_k(u_s))\nabla T_k(u_s)dx=\int_\Omega T_s(f)  T_k(u_s)dx\leq k \|f\|_{L^1(\Omega)}.\end{multline*}
Estimates \eqref{Mapriori} and \eqref{M*apriori} are direct consequences of the above one. Then, according to Lemma~\ref{lem:unif}, we reach the goal of this step. 

\medskip

\textbf{Step 3. Controlled radiation.} The proof of this step is a modification of~\cite[Lemma~5.1, Corollary~5.2]{gwiazda-ren-ell}. We consider the $N$-function $\dm:\r_+\cup\{0\}\to\r$ defined as follows. Let
\begin{equation}
\label{dm}
m_*(r)=\left(\inf_{x\in\Omega,\ |\xi|=r} M(x,\xi)\right)^{**}.
\end{equation} 
Then, let $\dm$ be a solution to the differential equation
\[\dm'(s)=\left\{\begin{array}{ll}
m'_*(s)&\text{for }s:\ m'_*(s)\leq \alpha\frac{m_*(s)}{s},\\
\alpha\frac{m_*(s)}{s}&\text{for }s:\ m'_*(s)> \alpha\frac{m_*(s)}{s},
\end{array}\right.\]
with the initial condition $\dm(0)=0=m_*(0)$ and a certain $\alpha>1$. Note that $\dm'(s)\leq m'_*(s)$  for every $s$, so $\dm(s)\leq m_*(s)$. Due to Lemma~\ref{rem:2ndconj} also $ m_*(s)\leq  \inf_{x\in\Omega,\ |\xi|=s} M(x,\xi)$ for every $s$. Thus 
\[\dm(s)\leq \inf_{x\in\Omega,\ |\xi|=s} M(x,\xi)\qquad\forall_{s\in\r_+\cup\{0\}}.\]
Moreover, by~\cite[Chapter~II.2.3, Theorem~3, point~1. (ii)]{rao-ren} $\dm$ satisfies $\Delta_2$-condition (cf.~\eqref{D2} without dependence on $x$). 

\begin{prop}
Suppose $u_s$ is a weak solution to~\eqref{prob:trunc}, $s>0$ and $f\in L^1(\Omega)$. Then  there exist   $c>0$ and $\gamma:\R_+\to\R_+$, such that for every $l>0$
\begin{equation}
\label{a<gamma}
\int_{\{l<|u_s|<l+1\}}A(x,\nabla u_s)\nabla u_s dx\leq  \gamma\left(\frac{l}{\dm(l)}\right),
\end{equation} 
and $\gamma$ is independent of $l,s$ and  $\lim_{r\to 0}\gamma(r)=0$.

\end{prop}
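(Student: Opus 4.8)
The plan is to test the truncated equation \eqref{prob:trunc} against the natural truncation of $u_s$ localized to the shell $\{l<|u_s|<l+1\}$, and then squeeze the resulting energy between two comparable quantities: the left-hand side of \eqref{a<gamma} on one side, and a term that we can estimate by the modular Poincar\'e inequality (Theorem~\ref{theo:Poincare}) applied to $\dm$ on the other. Concretely, I would first introduce the admissible test function $\vp_l := T_1(G_l(u_s))$, where $G_l(r) = (|r|-l)_+ \,\mathrm{sgn}(r)$ is the ''remainder'' truncation, so that $\nabla \vp_l = \nabla u_s \cdot \mathbf 1_{\{l<|u_s|<l+1\}}$. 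Since $u_s$ solves \eqref{prob:trunc} in the weak sense and, via the a priori estimates of Step~2 together with Theorem~\ref{theo:approx}, $\vp_l$ can be used as a test function (it belongs to $V_0^M\cap L^\infty(\Omega)$ and is approximated modularly by smooth functions), one obtains
\[
\int_{\{l<|u_s|<l+1\}} A(x,\nabla u_s)\cdot\nabla u_s\,dx = \int_\Omega T_s(f)\,\vp_l\,dx \le \int_{\{|u_s|>l\}} |f|\,dx .
\]
This already gives a bound, but it is not yet of the required form $\gamma(l/\dm(l))$ with $\gamma(r)\to 0$; the point is that the measure of the level set $\{|u_s|>l\}$ must itself be controlled in terms of $\dm(l)$, uniformly in $s$.

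The second ingredient is therefore an estimate on $|\{|u_s|>l\}|$. Here I would use the coercivity bound \eqref{Adown} (a consequence of (A2)) together with \eqref{Mapriori}: testing with $T_l(u_s)$ gives $\int_\Omega M(x,\nabla T_l(u_s))\,dx \le c\, l\,\|f\|_{L^1}$, and since $\dm(|\nabla T_l(u_s)|) \le M(x,\nabla T_l(u_s))$ pointwise (by the construction of $\dm$ as a lower bound for $\inf_x M(x,\cdot)$), we get $\int_\Omega \dm(|\nabla T_l(u_s)|)\,dx \le c\,l\,\|f\|_{L^1}$. Applying the modular Poincar\'e inequality (Theorem~\ref{theo:Poincare}, valid since $\dm\in\Delta_2$) to $g = T_l(u_s)\in W_0^{1,1}(\Omega)$ yields $\int_\Omega \dm(|T_l(u_s)|)\,dx \le c\,l\,\|f\|_{L^1}$. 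On the set $\{|u_s|>l\}$ one has $|T_l(u_s)| = l$, so $\dm(l)\,|\{|u_s|>l\}| \le c\,l\,\|f\|_{L^1}$, that is
\[
|\{|u_s|>l\}| \le \frac{c\,\|f\|_{L^1}}{\dm(l)/l} = c\,\|f\|_{L^1}\,\frac{l}{\dm(l)} .
\]
The constant here is uniform in $s$, which is exactly what we need.

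Combining the two estimates, set $A_l := \{|u_s|>l\}$; then the left-hand side of \eqref{a<gamma} is bounded by $\int_{A_l}|f|\,dx$, and $|A_l|\le c\,\|f\|_{L^1}\,l/\dm(l)$. By absolute continuity of the integral of the fixed $L^1$ function $f$, the quantity $\gamma(r) := \sup\{\int_E |f|\,dx : |E|\le c\,\|f\|_{L^1}\,r\}$ is a well-defined nondecreasing function of $r$ with $\lim_{r\to 0}\gamma(r)=0$, and it is independent of $l$ and $s$; moreover $\int_{A_l}|f|\,dx \le \gamma(l/\dm(l))$, which is precisely \eqref{a<gamma}. (One should also note $\dm(l)>0$ for $l>0$ and $\dm(l)/l$ is nondecreasing, being the derivative-type ratio of a convex function, so $l/\dm(l)$ is well-behaved; this is where the $\Delta_2$ property and the construction of $\dm$ via the differential equation are used.)

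The main obstacle I anticipate is \textbf{justifying that $\vp_l = T_1(G_l(u_s))$ is an admissible test function} for the weak formulation of \eqref{prob:trunc} and that the integration against $T_s(f)$ is legitimate. The weak solution from Theorem~\ref{theo:boundex} is only known to test against $C_0^\infty(\Omega)$, so one must pass to test functions in $V_0^M\cap L^\infty(\Omega)$ by a density/approximation argument: use the modular approximation $(\vp_l)_\delta \to \vp_l$ from Theorem~\ref{theo:approx}, exploit that $A(x,\nabla u_s)\in L_{M^*}(\Omega)$ (so the pairing $\langle A(x,\nabla u_s),\nabla(\vp_l)_\delta\rangle$ converges by the modular convergence together with the a priori bound \eqref{M*apriori}), and use $T_s(f)\in L^\infty$ with $(\vp_l)_\delta \to \vp_l$ in $L^1$. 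A secondary technical point is that the shell integral $\int_{\{l<|u_s|<l+1\}}A(x,\nabla u_s)\cdot\nabla u_s\,dx$ is finite and equals $\int_\Omega A(x,\nabla u_s)\cdot\nabla\vp_l\,dx$; this follows from (A2), which gives $A(x,\nabla u_s)\cdot\nabla u_s \ge 0$, together with the fact that $\nabla u_s$ restricted to $\{l<|u_s|<l+1\}$ coincides with $\nabla T_{l+1}(u_s)$ on that set, so the integrand is dominated by the integrable function $A(x,\nabla T_{l+1}(u_s))\cdot\nabla T_{l+1}(u_s)$.
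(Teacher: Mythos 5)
Your proposal follows essentially the same route as the paper: test the truncated equation against the shell cutoff, estimate the level set $|\{|u_s|>l\}|$ via the Chebyshev inequality, the modular Poincar\'e inequality (Theorem~\ref{theo:Poincare}) applied to $\dm$, and the a priori bound \eqref{Mapriori}, and finally invoke absolute continuity of the integral of $f$ to produce $\gamma$. The only cosmetic difference is that you test directly with the signed remainder $T_1(G_l(u_s))$, whose derivative is $\mathds{1}_{\{l<|u_s|<l+1\}}$, whereas the paper writes its test function as $1-\psi_l(u_s)$ with $\psi_l$ depending on $|r|$; your choice is the cleaner one, since it gives $\nabla\left(T_1(G_l(u_s))\right)=\nabla u_s\,\mathds{1}_{\{l<|u_s|<l+1\}}$ without an extraneous $\mathrm{sgn}(u_s)$ factor, but both lead to the same estimate.
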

\begin{proof} 
Note that for $\dm$ given by~\eqref{dm} we have\[|\{|u_s|\geq l\}|=|\{|T_l(u_s)|= l\}|=|\{|T_l(u_s)|\geq l\}|=|\{\dm(|T_l(u_s)|)\geq \dm(l)\}|.\]
Moreover, for $l>0$ we have
\begin{equation*}
\begin{split}
|\{|u_s|\geq l\}|&\leq \int_{\Omega} \frac{\dm(|T_l(u_s)|)}{\dm(l)} dx\leq \frac{c(N,\Omega)}{\dm(l)} \int_\Omega\dm( |\nabla T_l(u_s)|)dx\leq \\
&\leq  \frac{c(N,\Omega)}{\dm(l)} \int_\Omega M(x,  \nabla T_l(u_s) )dx \leq 
 \frac{C(M,N,\Omega)}{\dm(l)}  \cdot l\|f\|_{L^1(\Omega)}\leq\\&\leq C(f,M,N,\Omega)\frac{l}{\dm(l)} 
.\end{split}\end{equation*}
In the above estimates we apply (respectively) the Chebyshev inequality, the Poincar\'{e} inequality (Theorem~\ref{theo:Poincare}), a priori estimate~\eqref{Mapriori} and the facts that $f\in L^1(\Omega)$ and that $\dm$ is an $N$-function (cf.~Definition~\ref{def:Nf}). Thus,  there exists $\gamma:\r_+\to\r_+$ independent of $l,s$, for which  $\lim_{r\to \infty}\gamma(r)=0$. Moreover, $\int_E|f|\, dx\leq \gamma(|E|).$ In particular,
\begin{equation}
\label{intfgammaul}
\int_{\{|u_s|\geq l\}} |f| dx \leq \gamma\left(\frac{l}{\dm(l)}\right).
\end{equation} 

As for the second assertion let us define $\psi_l:\R\to\R$ by 
\begin{equation}
\psi_l(r):=
\min\{(l+1-|r|)^+,1\}
\label{psil}\end{equation}
and consider $(\psi_l(u_s))_\delta$ -- a sequence approximating $\psi_l(u_s)$ as in Theorem~\ref{theo:approx}. Using $(1-\psi_l(u_s))_\delta$ as a test function in~\eqref{prob:trunc} we get
\begin{multline*}\int_\Omega A(x, \nabla u_s)\nabla (1-\psi_l(u_s))dx=\lim_{\delta\to 0} \int_\Omega A(x, \nabla u_s)\nabla (1-\psi_l(u_s))dx=\\
=\lim_{\delta\to 0} \int_\Omega T_s(f)(1-\psi_l(u_s))_\delta dx=
\int_\Omega T_s(f)(1-\psi_l(u_s))dx.\end{multline*}
We notice that the meaning of truncations and the form of $\psi_l$, together with~\eqref{intfgammaul} implies 
\[\begin{split}&\int_{\{l<|u_s|<l+1\}} A(x, \nabla u_s)\nabla u_s\,dx=\\
&=\int_{\{l<|u_s|<l+1\}} A(x, \nabla T_{l+1}(u_s))\nabla T_{l+1}(u_s)dx=\int_\Omega A(x, \nabla u_s)\nabla (1-\psi_l(u_s))dx=\\
&=\int_\Omega T_s(f)(1-\psi_l(u_s))dx\leq \int_{\{|u_s|\geq l\}}|f| dx\leq \gamma\left(\frac{l}{\dm(l)}\right),\end{split}\]
which was the aim.
\end{proof}

\medskip

\textbf{Step 4. Convergence of truncations}

\begin{prop}
Suppose an $N$-function $M$ satisfies assumption (M) and function $A$ satisfies assumptions (A1)-(A3). For $s>0$ and $f\in L^1(\Omega)$ let $u_s$ be a weak solution to~\eqref{prob:trunc}. Then there exists a measurable function $u:\Omega\to\r$, such that $T_k(u)\in V_0^M$, being a limit of some subsequence of $\{u_s\}_s$ in the following sense 
\begin{eqnarray}
&u_s\to u  &a.e.\ \text{in}\ \Omega,\label{conv:usae}\\
&|\{|u|>l\}|\leq  \gamma\left(\frac{l}{\dm(l)}\right),& l\in\N,\label{conv:umeas}
\end{eqnarray}
and for each $k\in\N$ and $s\to\infty$
\begin{eqnarray}
&T_k(u_s)\xrightarrow{} T_k(u)\quad \text{strongly\ in}\ L^p(\Omega)\ \text{for}\ p\in[1,\infty),&\label{conv:TuLp}\\
&\nabla T_k(u_s)\xrightharpoonup{} \nabla T_k(u)\quad \text{weakly\ in}\ L^1(\Omega),&\label{conv:nTuwL}\\
&\nabla T_k(u_s)\xrightharpoonup{*} \nabla T_k(u)\quad \text{weakly}-*\ \text{in}\ L_M(\Omega;\rn),&\label{conv:nTuwLM}\\
&A(x,\nabla T_k(u_s))\xrightharpoonup{*} A(x,\nabla T_k(u))\quad  \text{weakly}-*\ \text{in}\ L_{M^*}(\Omega;\rn).&\label{conv:ATuwLMs}
\end{eqnarray}
\end{prop}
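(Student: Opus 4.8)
The plan is to run the standard truncation-compactness machinery of Boccardo--Murat type, adapted to the Musielak--Orlicz setting, using the a priori estimates of Step 2 and the controlled-radiation estimate of Step 3. First I would establish the almost everywhere convergence \eqref{conv:usae}. For this I follow the classical argument: the a priori bound \eqref{Mapriori} together with the modular Poincar\'e inequality (Theorem~\ref{theo:Poincare}) applied to $\dm$ gives, exactly as in the Proposition of Step 3, the estimate $|\{|u_s|\ge l\}|\le C \frac{l}{\dm(l)}$ uniformly in $s$; since $\dm$ is an $N$-function, $\frac{l}{\dm(l)}\to 0$, so $\{u_s\}_s$ is bounded in measure and, moreover, does not concentrate at infinity. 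Next, for each fixed $k$, the sequence $\{T_k(u_s)\}_s$ is bounded in $W_0^{1,1}(\Omega)$ (the gradients are bounded in $L_M$ by \eqref{Mapriori}, hence uniformly integrable by the definition of $N$-function and de la Vall\'ee-Poussin, hence bounded in $L^1$), so by Rellich--Kondrachov (in $W_0^{1,1}$, or via compactness of $W_0^{1,1}\hookrightarrow L^1$) we get a subsequence with $T_k(u_s)\to v_k$ strongly in $L^1(\Omega)$ and a.e. A diagonal argument over $k\in\N$ produces a single subsequence and a limit function $u$ with $T_k(u)=v_k$; the a.e. convergence of the truncations for every level, combined with the non-concentration estimate, upgrades to $u_s\to u$ a.e. in $\Omega$, which is \eqref{conv:usae}, and passing to the limit in the measure estimate gives \eqref{conv:umeas}.

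Once a.e. convergence is in hand, $T_k(u_s)\to T_k(u)$ a.e. for every $k$, and since $\|T_k(u_s)\|_{L^\infty}\le k$, the dominated convergence theorem yields \eqref{conv:TuLp} for every $p\in[1,\infty)$. For the gradient convergences I would argue by reflexivity-free weak compactness: the bound \eqref{Mapriori} says $\{\nabla T_k(u_s)\}_s$ is bounded in $L_M(\Omega;\rn)$, hence (again by the $N$-function property and de la Vall\'ee-Poussin) uniformly integrable, so by Dunford--Pettis it is weakly sequentially precompact in $L^1(\Omega;\rn)$; extracting a further subsequence gives a weak $L^1$ limit, which must equal $\nabla T_k(u)$ because $T_k(u_s)\to T_k(u)$ in $L^1$ and the distributional gradient is continuous under $L^1$ convergence — this is \eqref{conv:nTuwL}. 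To get \eqref{conv:nTuwLM}, I would use that bounded modular balls in $L_M$ are weak-$*$ sequentially compact when tested against $E_{M^*}$ (recall $(E_{M^*})^*=L_M$, cf. the duality stated after Definition~\ref{def:MOsp}, together with the dual characterization $(E_M)^*=L_{M^*}$), so along a subsequence $\nabla T_k(u_s)\xrightharpoonup{*}\chi_k$ in $L_M$; testing against bounded functions (which lie in $E_{M^*}$) and comparing with the $L^1$-weak limit identifies $\chi_k=\nabla T_k(u)$. The same scheme applied to $\{A(x,\nabla T_k(u_s))\}_s$, whose modular norm in $L_{M^*}$ is controlled by \eqref{M*apriori}, gives a weak-$*$ limit in $L_{M^*}$ along a further subsequence; call it $\overline{A}_k$ for the moment — the statement \eqref{conv:ATuwLMs} asserts $\overline{A}_k=A(x,\nabla T_k(u))$, and I would establish this identification by the Minty--Browder monotonicity trick combined with a Young-measure argument, exactly as announced in the introduction. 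Finally, one more diagonal extraction over $k$ merges all these subsequences into a single one valid for all $k$ simultaneously, and the a priori bounds pass to the limit to certify $\nabla T_k(u)\in L_M$, i.e. $T_k(u)\in V_0^M$.

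The main obstacle is the identification $\overline{A}_k = A(x,\nabla T_k(u))$ in \eqref{conv:ATuwLMs}: this is precisely where the lack of growth/$\Delta_2$ control bites, since one cannot simply invoke strong convergence of gradients or the classical Leray--Lions argument. The plan there is to pass to a Young measure $\nu_x$ generated by $\nabla T_k(u_s)$, note that its barycenter is $\nabla T_k(u)$ by \eqref{conv:nTuwL}, use monotonicity (A3) together with the weak-$*$ convergence of $A(x,\nabla T_k(u_s))$ against suitable test functions to show $\int (A(x,\lambda)-A(x,\nabla T_k(u)))\cdot(\lambda-\nabla T_k(u))\,d\nu_x(\lambda)\le 0$ a.e., hence the integrand vanishes $\nu_x$-a.e., and then recover $\overline{A}_k(x)=\int A(x,\lambda)\,d\nu_x(\lambda)=A(x,\nabla T_k(u)(x))$; the delicate point is justifying the limit passage in the monotonicity inequality using only the modular duality pairing (test functions in $E_{M^*}$ and the weak-$*$ convergences \eqref{conv:nTuwLM}, \eqref{conv:ATuwLMs}) rather than any reflexivity, which is exactly the technical heart carried over from~\cite{gwiazda-ren-ell,gwiazda-ren-cor} and adapted via the approximation Theorem~\ref{theo:approx}. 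The remaining diagonal extractions and limit-passages in the a priori bounds are routine.
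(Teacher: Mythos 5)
Your overall scheme matches the paper's: use the a~priori bound \eqref{Mapriori} to get $W_0^{1,1}$-boundedness of the truncations, compactness of $W_0^{1,1}\hookrightarrow L^1$ plus a diagonal extraction over $k$ to produce $u$ and the a.e.\ convergence \eqref{conv:usae}; Vitali/dominated convergence for \eqref{conv:TuLp}; uniform integrability via Lemma~\ref{lem:unif} and Dunford--Pettis for \eqref{conv:nTuwL}; the modular bounds and $(E_{M^*})^*=L_M$, $(E_M)^*=L_{M^*}$ duality for weak-$*$ compactness in $L_M$ and $L_{M^*}$, yielding \eqref{conv:nTuwLM} and a weak-$*$ limit $\cA_k$ of $A(x,\nabla T_k(u_s))$ along a subsequence. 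Up to that point you are on the paper's track.

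The genuine gap is in the identification $\cA_k=A(x,\nabla T_k(u))$, which is precisely where all the difficulty is concentrated and where your plan becomes too vague to carry out. You say you would establish the Young-measure inequality ``using monotonicity (A3) together with the weak-$*$ convergence of $A(x,\nabla T_k(u_s))$ against suitable test functions''. But there is no admissible test function that produces the needed energy inequality $\limsup_{s\to\infty}\int_\Omega A(x,\nabla T_k(u_s))\cdot\nabla T_k(u_s)\,dx\le\int_\Omega\cA_k\cdot\nabla T_k(u)\,dx$ directly: $T_k(u_s)-T_k(u)$ is not admissible because $T_k(u)\in V_0^M\cap L^\infty$ cannot be used as a test function in \eqref{prob:trunc} without first approximating it modularly by $(T_k(u))_\delta\in C_0^\infty(\Omega)$ via Theorem~\ref{theo:approx} (this is exactly why condition (M) is needed). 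Moreover, when you test \eqref{prob:trunc} by $(T_k(u_s)-(T_k(u))_\delta)$ you pick up the full gradient $\nabla u_s$, not $\nabla T_k(u_s)$, so you must insert the cutoff $\psi_l(u_s)$ to localize to $\{|u_s|<l+1\}$; this produces extra cross terms $\int A(x,\nabla u_s)\nabla\psi_l(u_s)(\dots)dx$ and a mismatch between $A(x,\nabla T_k(u_s))$ and $A(x,\nabla T_{l+1}(u_s))$ on $\{k<|u_s|\}$, both of which have to be killed using the controlled-radiation estimate \eqref{a<gamma} and the biting-type argument of Lemma~\ref{lem:TM1}. None of this appears in your plan; without it, the ``monotonicity plus weak-$*$ convergence'' step does not close, because $\int A(x,\nabla T_k(u_s))\cdot\nabla T_k(u_s)\,dx$ is a product of two merely weakly convergent sequences.

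A secondary, less serious point: you propose to run the identification via Young measures, concluding from $\int_{\rn}[A(x,\lambda)-A(x,\nabla T_k(u))]\cdot[\lambda-\nabla T_k(u)]\,d\nu_x(\lambda)\le 0$ that $\cA_k=A(x,\nabla T_k(u))$. But (A3) is only monotone, not strictly monotone, so this integral vanishing does not force $\nu_x$ to be a Dirac mass, and one has to argue further. The paper instead runs a direct Minty--Browder argument here --- first reducing via \eqref{lim<<} and \eqref{lim<A'} to \eqref{Ak-mon}, then choosing $\eta=\nabla T_k(u)\mathds{1}_{\Omega_i}+hz\mathds{1}_{\Omega_j}$ on the truncated level sets $\Omega_K=\{|\nabla T_k(u)|\le K\}$, sending $i\to\infty$, dividing by $h$ and sending $h\to0$ --- which circumvents exactly this issue and needs only monotonicity. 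The Young-measure machinery is reserved by the paper for the radiation estimate (R3) in Step~5, where it is used in conjunction with the biting lemma (Theorem~\ref{theo:bitinglemma}). You should either switch to the Minty argument here, or add the extra reasoning needed to conclude from the Young-measure inequality under mere monotonicity.
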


\begin{proof}The proven a priori estimate~\eqref{Mapriori} \begin{eqnarray*}
\int_\Omega M(x, \nabla T_k(u_s))dx&\leq& c k \|f\|_{L^1(\Omega)} 
\end{eqnarray*}
implies that for each $k$ the sequence $(T_k(u_s))_{s=1}^{\infty}$ is bounded in $W^{1,1}_0(\Omega)$. Hence, there exists a function $u$ such that
\begin{eqnarray*}
T_k(u_s)&\xrightarrow[s\to\infty]{}& T_k(u)\ \text{strongly in } L^1(\Omega),\\
\nabla T_k(u_s)&\xrightharpoonup[s\to\infty]{ }& \nabla T_k(u)\ \text{weakly in } L^1(\Omega;\rn),\\
\nabla T_k(u_s)&\xrightharpoonup[s\to\infty]{*}& \nabla T_k(u)\ \text{weakly-$*$ in } L_M(\Omega;\rn),
\end{eqnarray*}
in particular implying~\eqref{conv:nTuwL} and~\eqref{conv:nTuwLM}. Furthermore, the Lebesgue Monotone Convergence Theorem implies
\[
 u_s \xrightarrow[s\to\infty]{}  u \quad \text{strongly in } L^1(\Omega),\]
and  up to a subsequence we have~\eqref{conv:usae}, i.e.
\[
 u_s \xrightarrow[s\to\infty]{}  u \quad \text{a.e. in } \Omega.\]

Since $\Omega$ is bounded, for  fixed $k\in\N$ convergence in~\eqref{conv:TuLp} results from uniform integrability in~$L^p(\Omega)$ of bounded functions $T_k(u_s)$ combined with the Vitali Convergence Theorem (Theorem~\ref{theo:VitConv}). Meanwhile, the Dominated Convergence Theorem (due to~\eqref{intfgammaul}) gives~\eqref{conv:umeas}.

On the other hand, if for every $k$ we denote
\[\ask=A(x,\nabla T_{k}(u_s(x))),\]
then it follows from~\eqref{M*apriori} that there exists  $\cA_k\in L_{M^*}(\Omega;\rn)$ such that
\begin{equation}
\label{a-conv-ca}\ask\xrightharpoonup{*} \cA_k \quad  \text{weakly}-*\ \text{in}\ L_{M^*}(\Omega;\rn).
\end{equation}
Our aim is now to show that in~\eqref{a-conv-ca}
\begin{equation}
\label{lim=ca}
\cA_k(x)=A(x,\nabla T_k(u )).
\end{equation}

We take approximating sequence of smooth functions $\nabla (T_k(u))_{\delta}\xrightarrow[\delta\to 0]{M} \nabla T_k(u)$ (cf. Theorem~\ref{theo:approx}) and show that\begin{equation}
\label{limsup2}
\lim_{l\to\infty}
\lim_{\delta\to 0}\limsup_{s\to\infty}
\int_\Omega \asl \psi_l(u_s)\nabla \left[T_k(u_s)-(T_k(u))_\delta\right]dx= 0.
\end{equation}
Testing~\eqref{prob:trunc} by $\vp=\psi_l(u_s)(T_k(u_s)-(T_k(u))_\delta),$ where $\psi_l$ is given by~\eqref{psil},  we get
\begin{equation}
\label{cf.C5}
\int_\Omega A(x,\nabla u_s)\nabla \left[ \psi_l(u_s)(T_k(u_s)-(T_k(u))_\delta)\right]dx=\int_\Omega T_s(f)\psi_l(u_s)(T_k(u_s)-(T_k(u))_\delta)dx.
\end{equation}
We observe that the right-hand side of~\eqref{cf.C5} tends to zero, i.e.
\[\lim_{l\to\infty}\lim_{\delta\to 0}\lim_{s\to\infty} \int_\Omega T_s(f)\psi_l(u_s)(T_k(u_s)-(T_k(u))_\delta)dx=0.\]
Indeed, the convergence a.e. is ensured by~\eqref{conv:usae} and to apply the Lebesgue Dominated Convergence Theorem we note
\[\begin{split}&\lim_{\delta\to 0}\lim_{s\to\infty}\left|\int_\Omega T_s(f)\psi_l(u_s)(T_k(u_s)-(T_k(u))_\delta)dx\right|\leq\\
& \leq \lim_{\delta\to 0}\lim_{s\to\infty}\int_\Omega |T_s(f)|\psi_l(u_s)\cdot|T_k(u_s)-T_k(u)|dx+\lim_{\delta\to 0}\lim_{s\to\infty}\int_\Omega |T_s(f)|\psi_l(u_s)\cdot|T_k(u)-(T_k(u))_\delta|dx\leq \\ 
&\leq \lim_{\delta\to 0}\lim_{s\to\infty}\int_\Omega |f|\cdot 2 k\,dx+\lim_{\delta\to 0}\lim_{s\to\infty}\int_\Omega |f|\cdot|T_k(u)-(T_k(u))_\delta|dx=\\
&=2k\|f\|_{L^1(\Omega)}+\lim_{\delta\to 0} \int_\Omega |f|\cdot|T_k(u)-(T_k(u))_\delta|dx.\end{split}
\] The last expression is convergent due to Lemma~\ref{lem:TM1}.

Let us now concentrate on the left-hand side of~\eqref{cf.C5}:
\begin{equation*}
\begin{split}
&\int_\Omega A(x,\nabla u_s)\nabla \left[ \psi_l(u_s)(T_k(u_s)-(T_k(u))_\delta)\right]dx=\\
=&\int_\Omega A(x,\nabla u_s)\nabla  \psi_l(u_s)\left[T_k(u_s)-(T_k(u))_\delta\right]dx+\int_\Omega A(x,\nabla u_s) \psi_l(u_s)\nabla \left[T_k(u_s)-(T_k(u))_\delta\right]dx=\\
=&I_1+I_2,\end{split}
\end{equation*}
where due to~\eqref{a<gamma} we have
\[\begin{split}
\lim_{l\to\infty}\left(\lim_{\delta\to 0} \limsup_{s\to\infty} |I_1|\right)&\leq\lim_{l\to\infty}\left(\lim_{\delta\to 0} \limsup_{s\to\infty}\int_{\{l<|u_s|<l+1\}} A(x,\nabla u_s)\nabla u_s|T_k(u_s)-T_k(u) |dx\right)+\\&+\lim_{l\to\infty}\left(\lim_{\delta\to 0} \limsup_{s\to\infty}\int_{\{l<|u_s|<l+1\}} A(x,\nabla u_s)\nabla u_s|T_k(u)-(T_k(u))_\delta|dx\right)=\\
&= II_1+II_2.\end{split}\] 
Moreover,
\[\begin{split}II_1&\leq \lim_{l\to\infty}\left(2k\limsup_{s\to\infty}\int_{\{l<|u_s|<l+1\}} A(x,\nabla u_s)\nabla u_sdx\right)\leq \lim_{l\to\infty}\left[2k\gamma\left(\frac{l}{\dm(l)}\right)\right]=0,\end{split}\] 
meanwhile the convergence of $II_2$ results from Lemma~\ref{lem:TM1}.

Then passing to the limit in~\eqref{cf.C5} we obtain\begin{equation}
\label{limsup}
\lim_{l\to\infty}\lim_{\delta\to 0}\limsup_{s\to\infty} I_2
=\lim_{l\to\infty}\lim_{\delta\to 0}\limsup_{s\to\infty}
\int_\Omega A(x,\nabla u_s) \psi_l(u_s)\nabla \left[T_k(u_s)-(T_k(u))_\delta\right]dx= 0.
\end{equation}
Then~\eqref{limsup} is equivalent to~\eqref{limsup2}.

\medskip

Before we apply monotonicity trick, we need to show that
\begin{equation}
\label{limtrunc}
\lim_{l\to\infty}\lim_{\delta\to 0}\limsup_{s\to\infty}
\int_\Omega \ask \psi_l(u_s)\nabla \left[T_k(u_s)-(T_k(u))_\delta\right]dx= 0.
\end{equation}

Taking into account~\eqref{limsup2}, the equality~\eqref{limtrunc} will be proven when the following expression is shown to tend to $0$ (still $k\leq l$)\begin{equation}
\begin{split}
&III=\int_\Omega (\ask -\asl) \psi_l(u_s)\nabla \left[T_k(u_s)-(T_k(u))_\delta\right]dx=\\
=&\int_{\Omega} (\asl -A(x,0)) \mathds{1}_{\{k<|u_s| \}}\psi_l(u_s) \nabla (T_k(u))_\delta dx=\\
=&\int_{\Omega} \asl  \mathds{1}_{\{k<|u_s| \}} \psi_l(u_s) \nabla (T_k(u))_\delta dx.
\end{split}
\end{equation}
We prove that
\begin{equation}\label{asl<al}
\begin{split}
\lim_{\delta\to 0}\limsup_{s\to\infty} |III|&\leq \lim_{\delta\to 0}\limsup_{s\to\infty}\int_{\Omega} |\asl|  \mathds{1}_{\{k<|u_s| \}} \psi_l(u_s) |\nabla (T_k(u))_\delta|\, dx\leq \\
& \leq \lim_{\delta\to 0} \int_{\Omega} |{\cal A}_{l+1}|  \mathds{1}_{\{k<|u| \}} \psi_l(u) |\nabla (T_k(u))_\delta|\, dx.
\end{split}
\end{equation}
For this we will use Lemma~\ref{lem:TM1} with
\[w^s=|\asl|\cdot|\nabla (T_k(u))_\delta|\xrightharpoonup[s\to\infty]{L^1(\Omega)}|{\cal A}_{l+1}|\cdot|\nabla (T_k(u))_\delta|=w.\]
The convergence $w^s\xrightharpoonup{} w$ is a consequence of~\eqref{a-conv-ca}.  Let 
$v^s=\mathds{1}_{\{k<|u_s| \}}\psi_l(u_s)$ and $v^s_\ve\in C(\Omega)\cap L^\infty (\Omega)$ with $\ve\geq 0$ be given by
\[v^s_\ve=\left\{\begin{array}{ll}
1 & k<|u_s|<l,\\
\text{affine} & k-\ve\leq |u_s|\leq k,\ l\leq |u_s|\leq l+1\\
0 & |u_s|<k-\ve,\ |u_s|>l+1.\\
\end{array}\right.\]
Notice that for $s\to\infty$  and every $\ve>0$, due to continuity of $v_\ve^s$, we have
\[v^s_\ve\xrightarrow{a.e.}v_\ve:=\left\{\begin{array}{ll}
1 & k<|u|<l,\\
\text{affine} & k-\ve\leq |u|\leq k,\ l\leq |u|\leq l+1\\
0 & |u|<k-\ve,\ |u|>l+1.\\
\end{array}\right.\]

Furthermore, for every $s$ we have
\begin{equation}
\label{g-ep-s}
\int_\Omega |\asl|\cdot|\nabla (T_k(u))_\delta|v^s\,dx\leq\int_\Omega |\asl|\cdot|\nabla (T_k(u))_\delta|v^s_\ve\,dx.
\end{equation}

Since $v_\ve\in L^\infty (\Omega)$, Lemma~\ref{lem:TM1} yields $\int_\Omega w^s v^s_\ve\, dx\to \int_\Omega w v_\ve\, dx,$ that is
\[\lim_{s\to\infty}\int_\Omega |\asl|\cdot|\nabla (T_k(u))_\delta|v^s_\ve\,dx=\int_\Omega |\al|\cdot|\nabla (T_k(u))_\delta|v_\ve\,dx.\]
The Lebesgue Monotone Convergence Theorem implies
\begin{equation}
\label{g-ep-ve}\lim_{\ve\to 0} \int_\Omega |\al|\cdot|\nabla (T_k(u))_\delta|v_\ve\,dx=\int_\Omega |\al|\cdot|\nabla (T_k(u))_\delta|v_0\,dx=\int_\Omega |\al|\cdot|\nabla (T_k(u))_\delta|\mathds{1}_{\{k<|u| \}}\psi_l(u)\,dx.
\end{equation}

Thus~\eqref{g-ep-s} together with~\eqref{g-ep-ve} give
\[\begin{split}
\limsup_{s\to \infty} \int_\Omega |\asl|\cdot|\nabla (T_k(u))_\delta|g^s\,dx\leq\int_\Omega |\al|\cdot|\nabla (T_k(u))_\delta|\mathds{1}_{\{k<|u| \}}\psi_l(u)\,dx\end{split}\]
and we get~\eqref{asl<al}.

Our aim now is to prove
\begin{equation}
\label{al-delta}\lim_{\delta\to 0}\int_\Omega |\al|\cdot|\nabla (T_k(u))_\delta|\mathds{1}_{\{k<|u| \}}\psi_l(u)\,dx=0
\end{equation}
Recall that $\nabla (T_k (u))_\delta\xrightarrow{M}\nabla T_k (u)$. Therefore by Definition~\ref{def:convmod} ii), the sequence $\{M(x,\nabla(T_k (u))_\delta/\lambda)\}_\delta$ is uniformly bounded in $L^1(\Omega;\rn)$ for some $\lambda$ and consequently, by~Lemma~\ref{lem:unif}  $\{\nabla(T_k (u))_\delta\}_\delta$ is uniformly integrable. Hence the Vitali Convergence Theorem (Theorem~\ref{theo:VitConv}) gives 
\[\lim_{\delta\to 0}\int_\Omega |\al|\cdot|\nabla (T_k(u))_\delta|\mathds{1}_{\{k<|u| \}}\psi_l(u)\,dx=\int_\Omega |\al|\cdot|\nabla T_k(u) |\mathds{1}_{\{k<|u| \}}\psi_l(u)\,dx,\]
which is equal to zero, because $T_k(u) |\mathds{1}_{\{k<|u| \}}=0$. Thus~\eqref{al-delta} and~\eqref{limtrunc} hold.
\medskip
  
We observe that we can remove $\psi_l(u_s)$ from~\eqref{limtrunc}. Indeed, notice that for $l\geq k$ due to~Lemma~\ref{lem:M*<M} we have 
\begin{equation*}
\begin{split}
&\int_\Omega \ask \psi_l(u_s)\nabla \left[T_k(u_s)-(T_k(u))_\delta\right]dx=\\ 
&=\int_\Omega \ask \nabla \left[T_k(u_s)-(T_k(u))_\delta\right]dx-\int_{\{|u_s|>l\}} A(x,0) (\psi_l(u_s)-1)\nabla (T_k(u))_\delta dx=\\ 
&=\int_\Omega \ask \nabla \left[T_k(u_s)-(T_k(u))_\delta\right]dx.\end{split}
\end{equation*}
Therefore,~\eqref{limtrunc} is equivalent to 
\begin{equation}\label{lim<<}
\lim_{\delta\to 0}\limsup_{s\to\infty}\int_\Omega \ask\nabla \left[T_k(u_s)-(T_k(u))_\delta\right]dx= 0.
\end{equation}

Now we apply the Minty-Browder monotonicity trick. Since~\eqref{a-conv-ca}, then for each $\delta$
\begin{equation}
\label{lim<A}\lim_{s\to\infty} 
\int_\Omega \ask \nabla (T_k(u))_\delta dx=  \int_\Omega {\cal A}_k\cdot \nabla (T_k(u))_\delta dx.
\end{equation}
Then~\eqref{lim<<} together with~\eqref{lim<A} imply
\begin{equation}
\label{lim<A'} \limsup_{s\to\infty} 
\int_\Omega \ask \nabla T_k(u_s) dx=\lim_{\delta\to 0} \int_\Omega {\cal A}_k\cdot \nabla (T_k(u))_\delta dx=\int_\Omega {\cal A}_k\cdot \nabla T_k(u) dx,
\end{equation}
where the last equality is obtained analogically as~\eqref{al-delta}.

Monotonicity of~$A$ results in
\[\int_\Omega \ask \nabla T_k(u_s) dx\geq \int_\Omega \ask \eta\ dx+\int_\Omega A(x,\eta)  (\nabla T_k(u_s)-\eta)\ dx\]
for any $\eta\in \rn$.  Taking upper limit with ${s\to\infty}$ above (due to~\eqref{lim<A'},~\eqref{a-conv-ca}, and~\eqref{conv:nTuwLM}) we get
\begin{equation*}
\int_\Omega {\cal A}_k\cdot \nabla T_k(u) dx\geq \int_\Omega {\cal A}_k\cdot \eta\ dx+\int_\Omega A(x,\eta)  (\nabla T_k(u)-\eta)\ dx.\end{equation*} 
Note that it is equivalent to
\begin{equation}
\label{Ak-mon} \int_\Omega( {\cal A}_k- A(x,\eta) )( \nabla T_k(u)-\eta) dx\geq  0.
\end{equation} 

Let us define\begin{equation}
\label{omm}
\Omega_K=\{x\in\Omega:\ |\nabla T_k(u)|\leq K\}.
\end{equation} 
Then, in~\eqref{Ak-mon} we choose
\[\eta=\nabla T_k(u)\mathds{1}_{\Omega_i}+hz\mathds{1}_{\Omega_j},\]
where  $0<j<i$, $h\in\rp$ and $z\in L^\infty(\Omega;\rn)$, to get 
\begin{equation*}
\int_\Omega( {\cal A}_k- A(x,\nabla T_k(u)\mathds{1}_{\Omega_i}+hz\mathds{1}_{\Omega_j}) )( \nabla T_k(u)-\nabla T_k(u)\mathds{1}_{\Omega_i}-hz\mathds{1}_{\Omega_j}) dx\geq  0.
\end{equation*} 
Notice that it is equivalent to
\begin{equation}
\label{Ak-po-mon} \int_{\Omega\setminus\Omega_i}  {\cal A}_k\nabla T_k(u)dx- \int_{\Omega\setminus\Omega_i}A(x,0)\nabla T_k(u)dx +h\int_{ \Omega_j} (A(x,\nabla T_k(u)+hz)-{\cal A}_k)z  dx\geq  0.
\end{equation} 
The first and the second expression above tend to zero when $i\to\infty.$ Indeed, since ${\cal A}_k,A(x,0)\in L_{M^*}(\Omega;\rn)$ and $\nabla T_k(u)\in L_M(\Omega;\rn)$, the H\"older inequality~\eqref{inq:Holder} gives boudedness of integrands in $L^1(\Omega)$. Then we take into account shrinking domain of integration to get the desired convergence to $0$. In~particular, we can erase these expressions in~\eqref{Ak-po-mon} and divide the remaining expression by $h>0$, to obtain
\begin{equation*} \int_{ \Omega_j} (A(x,\nabla T_k(u)+hz)-{\cal A}_k)z  dx\geq  0.
\end{equation*}

Note that
\[A(x,\nabla T_k(u)+hz)\xrightarrow[h\to 0]{}A(x,\nabla T_k(u))\quad\text{a.e. in}\quad \Omega_j.\] 
Moreover, as $A(x,\nabla T_k(u)+hz)$ is bounded on $\Omega_j$, Lemma~\ref{lem:M*<M}  results in
\[\int_{ \Omega_j} M^*\left(x,A(x,\nabla T_k(u)+hz)\right)dx\leq \frac{2}{c_A} \sup_{h\in(0,1)} \int_{ \Omega_j} M\left(x,\frac{2}{c_A}A(x,\nabla T_k(u)+hz)\right)dx.\]
The right-hand side is bounded, because $(\nabla T_k(u)+hz)_h$ is uniformly bounded in $L^\infty(\Omega_j;\rn)\subset L_M(\Omega;\rn)$ (cf.~\eqref{omm} and~\eqref{LinfinLM}). Hence, Lemma~\ref{lem:unif} gives uniform integrability of $\left(A(x,\nabla T_k(u)+hz)\right)_h$. When we notice that $|\Omega_j|<\infty$, we can apply the Vitali Convergence Theorem (Theorem~\ref{theo:VitConv}) to get
\[A(x,\nabla T_k(u)+hz)\xrightarrow[h\to 0]{}A(x,\nabla T_k(u))\quad\text{in}\quad L^1(\Omega_j;\rn).\] 
Thus
\begin{equation*} \int_{ \Omega_j} (A(x,\nabla T_k(u)+hz)-{\cal A}_k)z  dx\xrightarrow[h\to 0]{} \int_{ \Omega_j} (A(x,\nabla T_k(u))-{\cal A}_k)z  dx.
\end{equation*}
Consequently,
\begin{equation*}  \int_{ \Omega_j} (A(x,\nabla T_k(u))-{\cal A}_k)z  dx\geq 0,
\end{equation*}
for any $z\in L^\infty(\Omega;\rn)$. Let us take 
\[z=\left\{\begin{array}{ll}-\frac{A(x,\nabla T_k(u))-{\cal A}_k}{|A(x,\nabla T_k(u))-{\cal A}_k|}&\ \text{if}\quad A(x,\nabla T_k(u))-{\cal A}_k\neq 0,\\
0&\ \text{if}\quad A(x,\nabla T_k(u))-{\cal A}_k\neq 0.
\end{array}\right.\]
We obtain 
\begin{equation*}  \int_{ \Omega_j} |A(x,\nabla T_k(u))-{\cal A}_k| dx\leq 0,
\end{equation*}
hence \[A(x,\nabla T_k(u))={\cal A}_k\qquad \text{a.e.}\quad\text{in}\quad \Omega_j.\]
Since $j$ is arbitrary, we have the equality a.e. in $\Omega$ and~\eqref{lim=ca} is satisfied.
\end{proof}

\textbf{Step 5. Renormalized solutions.}

We aim at proving that $u$ is a renormalized solution (see Introduction). At first we observe that $u$ satisfies (R1) and concentrate on (R2).

Since $T_k(u)\in V_0^M\cap L^\infty(\Omega)$, Theorem~\ref{theo:approx} ensures that there exists a sequence $\{u_r\}_r\subset C_0^\infty(\Omega)$ indexed with $r\to \infty$,  such that
\begin{eqnarray*}
&u_r\xrightarrow{} u\quad \text{a.e.\ in}\ \Omega,&\\
&\nabla T_k(u_r)\xrightharpoonup{*} \nabla T_k(u)\quad \text{weakly}-*\ \text{in}\ L_M(\Omega;\rn),&\\
&\nabla h(u_r)\xrightharpoonup{*} \nabla h(u)\quad  \text{weakly}\ \text{in}\ L_{M}(\Omega),&
\end{eqnarray*}
where $h\in C_c^1(\R)$ is arbitrary. 

We test~\eqref{prob:trunc} by $\psi_l(u_s)h(u_r)\phi$ with $\phi\in W^{1,\infty}_0(\Omega)$ and get
\[L_{s,r,l}=\int_\Omega A(x,\nabla u_s) \nabla [\psi_l(u_s)h(u_r)\phi]dx= \int_\Omega T_s(f)  \psi_l(u_s)h(u_r)\phi\,dx=R_{s,r,l}.\]

We notice at first that due to the Lebesgue Dominated Convergence Theorem it holds that
\[\lim_{l\to\infty}\lim_{r\to \infty}\limsup_{s\to\infty} R_{s,r,l}=\int_\Omega f h(u)\phi dx. \]
Meanwhile on the left-hand side\[L_{s,r,l}=\int_\Omega A(x,\nabla u_s) \nabla  \psi_l(u_s)h(u_r)\phi dx+\int_\Omega A(x,\nabla u_s) \psi_l(u_s)\nabla [h(u_r)\phi]dx=L^1_{s,r,l}+L^2_{s,r,l},\]
where
\[\lim_{l\to\infty}\lim_{r\to \infty}\limsup_{s\to\infty}|L^1_{s,r,l}|\leq \|h\|_{L^\infty(\Omega)}\|\phi\|_{L^\infty(\Omega)}\lim_{l\to\infty}\lim_{r\to \infty} \left(\sup_{s}\int_{\{l<|u_s|<l+1\}} A_{s,l}(x) \nabla T_{l+1} (u_s) dx\right)=0\]
 due to~\eqref{a<gamma}. As for $L^2_{s,r,l}$ we notice that when $s\to\infty$, up to a subsequence,
\begin{equation*}\asl\xrightharpoonup{ } A(x,\nabla T_{l+1}(u))\quad  \text{weakly} \ \text{in}\ L^{1}(\Omega).
\end{equation*}
Indeed, a priori estimate~\eqref{M*apriori} combined with Lemma~\ref{lem:unif} give uniform integrability. Then, taking into account weak-* convergence~\eqref{conv:ATuwLMs}, the Dunford-Pettis Theorem (Theorem~\ref{theo:dunf-pet}) ensures weak $L^1$-convergence up to a subsequence.

Moreover, note that\begin{eqnarray*}
&|\psi_l(u_s)|\leq 1,&\\
&\nabla (h(u_r)\phi)\in L^\infty(\Omega;\rn).&
\end{eqnarray*} and for $s\to\infty$
\[\psi_l(u_s)\xrightarrow{} \psi_l(u)\quad \text{a.e.\ in}\ \Omega.\]
The sequence $\{  A(x,\nabla u_s) \psi_l(u_s)\nabla [h(u_r)\phi] \}_s$ is uniformly integrable. 
Due to the consequence of Chacon's Biting Lemma, Theorem~\ref{theo:bitinglemma}, we notice that \[\limsup_{r\to\infty}\limsup_{s\to\infty}\int_\Omega \asl\nabla h(u_r)\psi_l(u_s)dx=\int_\Omega A(x,\nabla T_{l+1}(u )) \psi_l(u )\nabla [h(u )\phi]dx.\]

Since $\mathrm{supp}\, h(u)\subset[-m,m]$ for some $m\in\N$ and we can consider only $l>m+1$. Then
\[\lim_{l\to\infty}\limsup_{r\to\infty}\limsup_{s\to\infty}L^2_{s,r,l}
=\lim_{l\to\infty}\int_\Omega A(x,\nabla T_{l+1}(u )) \psi_l(u )\nabla [h(u )\phi]dx=\int_\Omega A(x,\nabla  u  )  \nabla [h(u )\phi]dx.\]
and our solution $u$ satisfies condition (R2).

\medskip

Let us consider radiation control condition (R3), i.e.
\[\int_{\{l<|u|<l+1\}}A(x,\nabla u)\cdot\nabla u\, dx=\int_{\{l<|u|<l+1\}}A(x,\nabla T_{l+1}( u))\cdot\nabla  T_{l+1}(u )\, dx\xrightarrow[l\to\infty]{} 0 .\]
We follow the ideas of~\cite{gwiazda-ren-para} involving the Chacon Biting Lemma and the Young measure approach to show that for $s\to\infty$ it holds that
\begin{equation}
\label{117} \asl\cdot\nabla T_{l+1}(u_s)\xrightharpoonup{} A(x,\nabla T_{l+1}(u) )\cdot \nabla T_{l+1}(u)\qquad \text{weakly in } L^1(\Omega).
\end{equation}
First we observe that  the sequence $\{[\asl-A(x,\nabla T_{l+1}(u ))]\cdot [ \nabla T_{l+1}(u_s)-\nabla T_{l+1}(u )]\}_s$ is uniformly bounded in $L^1(\Omega)$. Indeed,
\[\begin{split}&\int_\Omega[\asl-A(x,\nabla T_{l+1}(u ))]\cdot [ \nabla T_{l+1}(u_s)-\nabla T_{l+1}(u )]dx\leq\\ 
&\qquad\leq \int_\Omega \asl \nabla T_{l+1}(u_s) dx
+\int_\Omega \asl \nabla T_{l+1}(u )  dx+\\
&\qquad+\int_\Omega A(x,\nabla T_{l+1}(u )) \nabla T_{l+1}(u_s) dx+\int_\Omega A(x,\nabla T_{l+1}(u )) \nabla T_{l+1}(u ) dx=IV_1+IV_2+IV_3+IV_4,
\end{split}\]
where $IV_1$ is uniformly bounded due to~\eqref{a<gamma} and $IV_4$ is independent of $s$. As for $IV_2$ we note 
\[\begin{split}
\limsup_{s\to\infty}IV_2&\leq\lim_{\delta\to 0}\limsup_{s\to\infty}\int_\Omega \asl (\nabla T_{l+1}(u)- \nabla( T_{l+1}(u))_\delta) dx +\lim_{\delta\to 0}\limsup_{s\to\infty}\int_\Omega \asl \nabla (T_{l+1}(u) )_\delta dx\leq\\
&= 0+\lim_{\delta\to 0} \int_\Omega \al \nabla (T_{l+1}(u) )_\delta dx=\int_\Omega \al \nabla T_{l+1}(u)   dx,\end{split}\]
where we applied~\eqref{lim<<},~\eqref{conv:ATuwLMs}, and then~\eqref{lim<A'}. Moreover, in the case of $IV_3$ the Fenchel-Young inequality and~\eqref{Mapriori} gives boundedness.

Then monotonicity of $A(x,\cdot)$ and  Theorem~\ref{theo:bitinglemma}  give, up to~a~subsequence, convergence
\begin{equation}
\label{110}
\begin{split}0&\leq [\asl-A(x,\nabla T_{l+1}(u ))]\cdot [\nabla T_{l+1}(u_s)-\nabla T_{l+1}(u )]\\
&\xrightarrow{b}\int_\rn [A(x,\lambda)-A(x,\nabla T_{l+1}(u ))]\cdot [\lambda-\nabla T_{l+1}(u )]d\nu_{x}(\lambda),\end{split}
\end{equation}
where $\nu_{x}$ denotes the Young measure generated by the sequence $\{\nabla T_{l+1}(u_s)\}_s$.

Since $\nabla T_{l+1}(u_s)\xrightharpoonup{}\nabla T_{l+1}(u )$ in $L^1(\Omega)$, we have $\int_\rn\lambda \, d\nu_x(\lambda)=\nabla T_{l+1}(u)$ for a.e. $x\in\Omega$. Then
\[\int_\rn  \asl \cdot [\lambda-\nabla T_{l+1}(u )]d\nu_{x}(\lambda)=0\]
and the limit in~\eqref{110} is equal for a.e. $x\in\Omega$ to
\begin{equation}
\label{111} \int_\rn [A(x,\lambda)-A(x,\nabla T_{l+1}(u ))]\cdot [\lambda-\nabla T_{l+1}(u )]d\nu_{x}(\lambda)=
\int_\rn  A(x,\lambda) \cdot  \lambda\, d\nu_{x}(\lambda)-\int_\rn  A(x,\lambda) \cdot \nabla  T_{l+1}(u )d\nu_{x}(\lambda).
\end{equation}

Uniform boundedness of the sequence $\{ \asl\nabla T_{l+1}(u_s) \}_s$ in $L^1(\Omega)$ (cf.~\eqref{a<gamma}) enables us  to apply once again Theorem~\ref{theo:bitinglemma} to obtain
\begin{equation*}
 \asl\nabla T_{l+1}(u_s)\xrightarrow{b} \int_\rn  A(x,\lambda) \cdot  \lambda\,d\nu_{x}(\lambda).
\end{equation*}
Moreover, assumption (A2) implies $\asl\nabla T_{l+1}(u_s)\geq 0$. Therefore, due to~\eqref{111} and~\eqref{110}, we have
\[\limsup_{s\to\infty} A(x,\nabla T_{l+1}(u_s) ) \nabla T_{l+1}(u_s)\geq \int_\rn  A(x,\lambda) \cdot  \lambda\,d\nu_{x}(\lambda).\]
Taking into account that in~\eqref{lim<A'} we can put ${\cal A}_k=A(x,\nabla T_{l+1}(u) )=\int_\rn  A(x,\lambda) \,d\nu_{x}(\lambda)$, the above expression implies\[\nabla T_{l+1}(u) \int_\rn  A(x,\lambda) \,d\nu_{x}(\lambda)\geq \int_\rn  A(x,\lambda) \cdot  \lambda\,d\nu_{x}(\lambda).\]

When we apply it, together with~\eqref{111}, the limit in~\eqref{110} is non-positive. Hence,
\begin{equation*} [\asl-A(x,\nabla T_{l+1}(u ))]\cdot [\nabla T_{l+1}(u_s)-\nabla T_{l+1}(u )]\xrightarrow{b} 0.
\end{equation*}
Observe further that $A(x,\nabla T_{l+1}(u ))\in L_{M^*}(\Omega;\rn)$ and we can choose ascending family of sets $E^{l+1}_j$, such that $| E^{l+1}_j|\to 0$ for $j\to \infty$ and $A(x,\nabla T_{l+1}(u ))\in L^\infty(\Omega\setminus E^{l+1}_j).$ Then, since $\nabla T_{l+1}(u_s)\xrightharpoonup{}\nabla T_{l+1}(u )$, we get\begin{equation*} A(x,\nabla T_{l+1}(u )) \cdot [\nabla T_{l+1}(u_s)-\nabla T_{l+1}(u )]\xrightarrow{b}  0
\end{equation*}
and similarly we conclude
\begin{equation*} \asl\cdot\nabla T_{l+1}(u )\xrightarrow{b}   A(x,\nabla T_{l+1}(u))\cdot\nabla T_{l+1}(u ).
\end{equation*}
Summing it up we get
\begin{equation*}
\asl\cdot\nabla T_{l+1}(u_s )\xrightarrow{b}  A(x,\nabla T_{l+1}(u))\cdot\nabla T_{l+1}(u ).
\end{equation*}
Recall that Theorem~\ref{theo:bitinglemma} together with~\eqref{lim<A'} and~\eqref{conv:ATuwLMs} results in~\eqref{117}. 

We turn back to prove (R3). Note that $\nabla u_s=0$ a.e. in $\{x\in\Omega:|u_s|\in\{l,l+1\}\}$. Then~\eqref{a<gamma} implies
\begin{equation*}
 \lim_{l\to\infty}\sup_{s>0}\int_{\{l-1<|u_s|<l+2\}}A(x,\nabla u_s)\cdot\nabla u_s\,dx=0.
\end{equation*}
For $g_l:\r\to\r$ defined by
\[g_l(r)=\left\{\begin{array}{ll}1&\text{if }\ l\leq |r|\leq l+1,\\
0&\text{if }\ |r|<l-1\text{ or } |r|> l+2,\\
\text{is affine} &\text{otherwise},
\end{array}\right.\]
we have
\begin{equation}
\label{128}
\int_{\{l-1<|u|<l+2\}}A(x,\nabla u)\cdot\nabla u\,dx\leq \int_{\Omega}g_l(u)A(x,\nabla T_{l+2}( u))\cdot\nabla  T_{l+2}( u)\,dx.
\end{equation}
Let us remind that we know that $u_s\to u$ a.e. in $\Omega$ (cf.~\eqref{conv:usae}) and $|\{x:|u_s|>l\}|\leq\gamma\left(l/\dm(l)\right)$ (cf.~\eqref{conv:umeas}). Moreover, we have weak convergence~\eqref{117}, $A(x,\nabla T_{l+2}( u_s))\cdot\nabla  T_{l+2}( u_s)>0$ and function $g_l$ is continuous and bounded. Thus, we infer that we can estimate the limit of the right-hand side of~\eqref{128} in the following way
\[\begin{split}
0&\leq \lim_{l\to\infty} \int_{\{l-1<|u|<l+2\}}A(x,\nabla u)\cdot\nabla u\,dx\leq \lim_{l\to\infty}\int_{\Omega}g_l(u)A(x,\nabla T_{l+2}( u))\cdot\nabla  T_{l+2}( u)\,dx=\\
&= \lim_{l\to\infty} \lim_{s\to\infty} \int_{\Omega}g_l(u)A(x,\nabla T_{l+2}(u_s))\cdot\nabla T_{l+2}(u_s)\,dx\leq\\
&\leq \lim_{l\to\infty}  \lim_{s\to\infty}\int_{\{l-1<|u|<l+2\}}A(x,\nabla T_{l+2}(u_s))\cdot\nabla T_{l+2}(u_s)\,dx=0,
\end{split}\]
where the last equality comes from~\eqref{a<gamma}.

Hence, our solution $u$ satisfies condition (R3) and  is a renormalized solution. 
\end{proof}

\section*{Appendix A}

\begin{defi}[$N$-function]\label{def:Nf} Suppose $\Omega\subset\rn$ is an open bounded set. A~function   $M:\Omega\times\rn\to\r$ is called an $N$-function if it satisfies the
following conditions:
\begin{enumerate}
\item $ M$ is a Carath\'eodory function (i.e. measurable with respect to $x$ and continuous with respect to the last variable), such that $M(x,\xi) = 0$ if and only if $\xi = 0$; and $M(x,\xi) = M(x, -\xi)$ a.e. in $\Omega$,
\item $M(x,\xi)$ is a convex function with respect to $\xi$,
\item $\lim_{|\xi|\to 0}\mathrm{ess\,sup}_{x\in\Omega}\frac{M(x,\xi)}{|\xi|}=0$,
\item $\lim_{|\xi|\to \infty}\mathrm{ess\,inf}_{x\in\Omega}\frac{M(x,\xi)}{|\xi|}=\infty$.
\end{enumerate}
\end{defi}

\begin{defi}[Complementary function] \label{def:conj} 
The complementary~function $M^*$ to a function  $M:\Omega\times\rn\to\r$ is defined by
\[M^*(x,\eta)=\sup_{\xi\in\rn}(\xi\cdot\eta-M(x,\xi)),\qquad \eta\in\rn,\ x\in\Omega.\]
\end{defi}
\begin{rem}\label{rem:f*<g*}
If $f(x,\xi)\leq g(x,\xi)$, then $g^*(x,\xi)\leq f^*(x,\xi)$.
\end{rem}

\begin{rem} If $M$ is an $N$-function and $M^*$ its complementary, we have\begin{itemize}
\item the Fenchel-Young inequality \begin{equation}
\label{inq:F-Y}|\xi\cdot\eta|\leq M(x,\xi)+M^*(x,\eta)\qquad \mathrm{for\ all\ }\xi,\eta\in\rn\mathrm{\ and\ }x\in\Omega.
\end{equation}
\item the generalised H\"older's inequality \begin{equation}
\label{inq:Holder}
\left|\int_{\Omega} \xi\cdot\eta\,dx\right|\leq 2\|\xi\|_{L_M }\|\eta\|_{L_{M^*} }\quad \mathrm{for\ all\ }\xi\in L_M(\Omega;\rn),\eta\in L_{M^*}(\Omega;\rn).
\end{equation}
\end{itemize}
\end{rem}

\begin{lem}\label{lem:M*<M} Suppose $M$ and $A$ are such that (A2) is satisfied, then 
\begin{equation*}
\int_\Omega M^*(x,A(x,\eta))dx\leq \frac{2}{c_A}\int_\Omega M\left(x, \frac{2}{c_A}\eta \right)dx\quad\text{for}\quad \eta\in L^\infty(\Omega;\rn).\end{equation*} 
\end{lem}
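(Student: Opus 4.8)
The plan is to derive the inequality directly from assumption (A2) combined with the Fenchel--Young inequality, working pointwise and then integrating. Fix $\eta \in L^\infty(\Omega;\rn)$. The starting point is (A2): for a.e. $x$,
\[
A(x,\eta)\cdot\eta \geq c_A\bigl(M(x,\eta) + M^*(x,A(x,\eta))\bigr),
\]
so in particular $c_A M^*(x,A(x,\eta)) \leq A(x,\eta)\cdot\eta$. The idea is to bound the right-hand side $A(x,\eta)\cdot\eta$ from above by something involving $M$ at a rescaled argument plus a controllable fraction of $M^*(x,A(x,\eta))$, so that the latter can be absorbed into the left-hand side.

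First I would apply the Fenchel--Young inequality~\eqref{inq:F-Y} in the rescaled form: for any $x$,
\[
A(x,\eta)\cdot\eta = \Bigl(\tfrac{2}{c_A}\eta\Bigr)\cdot\Bigl(\tfrac{c_A}{2}A(x,\eta)\Bigr) \leq M\Bigl(x,\tfrac{2}{c_A}\eta\Bigr) + M^*\Bigl(x,\tfrac{c_A}{2}A(x,\eta)\Bigr).
\]
Then, since $c_A \in (0,1]$ we have $\tfrac{c_A}{2} \leq \tfrac12$, and by convexity of $M^*(x,\cdot)$ together with $M^*(x,0)=0$ one gets $M^*(x,\tfrac{c_A}{2}A(x,\eta)) \leq \tfrac{c_A}{2}M^*(x,A(x,\eta))$ (using $M^*(x,t\zeta)\le t\,M^*(x,\zeta)$ for $t\in[0,1]$). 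Combining,
\[
c_A M^*(x,A(x,\eta)) \leq A(x,\eta)\cdot\eta \leq M\Bigl(x,\tfrac{2}{c_A}\eta\Bigr) + \tfrac{c_A}{2}M^*(x,A(x,\eta)),
\]
and moving the last term to the left yields $\tfrac{c_A}{2}M^*(x,A(x,\eta)) \leq M(x,\tfrac{2}{c_A}\eta)$, i.e. the pointwise bound $M^*(x,A(x,\eta)) \leq \tfrac{2}{c_A}M(x,\tfrac{2}{c_A}\eta)$. Integrating over $\Omega$ gives the claim.

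The only subtlety — and the step I would check most carefully — is the legitimacy of the integration and the finiteness of the quantities involved: one needs $A(x,\eta) \in L^1_{loc}$ or at least measurable so that $M^*(x,A(x,\eta))$ makes sense as a measurable function, and one needs $\int_\Omega M(x,\tfrac{2}{c_A}\eta)\,dx < \infty$. The latter follows from $\eta \in L^\infty(\Omega;\rn)$ together with the integrability condition~\eqref{ass:M:int} in assumption (M) (bounding $\tfrac{2}{c_A}|\eta|$ by a constant and using convexity/monotonicity of $M$ in $|\xi|$). Measurability of $x \mapsto A(x,\eta(x))$ follows from (A1), and measurability of the composition with the Carath\'eodory function $M^*$ is then standard. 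With those observations the pointwise inequality integrates without obstruction, so there is no real ``hard part'' here; the estimate is essentially a one-line rescaled Fenchel--Young argument with the absorption trick, and the bookkeeping of the constant $\tfrac{2}{c_A}$ is the only thing to keep straight.
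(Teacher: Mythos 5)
Your argument is correct and is essentially identical to the paper's: both use (A2), the rescaled Fenchel--Young inequality $A(x,\eta)\cdot\eta = \bigl(\tfrac{2}{c_A}\eta\bigr)\cdot\bigl(\tfrac{c_A}{2}A(x,\eta)\bigr)$, the convexity bound $M^*(x,\tfrac{c_A}{2}\zeta)\le\tfrac{c_A}{2}M^*(x,\zeta)$, and the absorption of $\tfrac{c_A}{2}M^*(x,A(x,\eta))$ into the left-hand side before integrating. The only cosmetic difference is that the paper drops the nonnegative term $c_A M(x,\eta)$ from the left rather than phrasing it as an ``absorption'' step, and it cites~\eqref{LinfinLM} for the finiteness remark you make.
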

\begin{proof}
Since $M^*$ is convex, $M^*(x,0)=0$ and $c_A\in (0,1]$, we notice that
\[M^*\left(x,\frac{c_A}{2}A\left(x,\eta\right)\right)\leq \frac{c_A}{2}M^*(x,A(x,\eta)).\]
Taking this into account together with~(A2) and~\eqref{inq:F-Y} we have
\[\begin{split}
c_A\left(M(x,\eta)+M^*(x,A(x,\eta))\right)\leq \frac{c_A}{2} A(x,\eta)\cdot \frac{2}{c_A}\eta &\leq M\left(x,\frac{2}{c_A}\eta\right)+M^*\left(x,\frac{c_A}{2}A(x,\eta)\right)\leq\\ &\leq M\left(x,\frac{2}{c_A}\eta\right)+\frac{c_A}{2}M^*\left(x,A(x,\eta)\right).\end{split}\]
We can ignore $M(x,\eta)>0$ on the left-hand side above, rearrange the remaining terms and integrate both sides over $\Omega$ (cf.~\eqref{LinfinLM}) to get the claim.
\end{proof}

\begin{rem}\label{rem:2ndconj} For any function $f:\r^M\to\r$ the second conjugate function $f^{**}$ is convex and $f^{**}(x)\leq f(x)$. In fact,  $f^{**}$ is a convex envelope of $f$, namely it is the biggest convex function smaller or equal to~$f$.
\end{rem}

\begin{lem}\label{lem:Mass} Suppose  a cube $\Qd$ is an arbitrary one defined in (M) with $\delta_0=1/(8\sqrt{N})$ and function $M:\rn\times[0,\infty)\rightarrow[0,\infty)$  is log-H\"older continuous, that is there exist constants $a_1>0$ and $b_1\geq 1$, such that for all $x,y\in\Omega$ with $|x-y|\leq \frac{1}{2}$ and all $\xi\in\rn$ we have~\eqref{M2'}.  Let us consider function  $ \Mjd $ given by~\eqref{Mjd} and its greatest convex minorant $(\Mjd)^{**}$. Then there exist constants $a,c>0$, such that~\eqref{M2} is satisfied. 
\end{lem}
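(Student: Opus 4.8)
\textbf{Proof plan for Lemma~\ref{lem:Mass}.}

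The plan is to reduce the ratio $M(x,\xi)/(\Mjd(\xi))^{**}$ to a comparison between $M(x,\xi)$ and $\Mjd(\xi)=\inf_{y\in\tQd\cap\Omega}M(y,\xi)$, and then to control the gap between $\Mjd$ and its convex envelope $(\Mjd)^{**}$. First I would fix $\delta<\delta_0=1/(8\sqrt N)$, a cube $\Qd$, and a point $x\in\Qd$. For any $y\in\tQd\cap\Omega$ the diameter bound gives $|x-y|\le \mathrm{diam}\,\tQd = 4\delta\sqrt N \le \tfrac12$, so the log-H\"older hypothesis \eqref{M2'} applies and yields
\[
M(x,\xi)\le \max\!\left\{|\xi|^{-\frac{a_1}{\log|x-y|}},\,b_1^{-\frac{a_1}{\log|x-y|}}\right\} M(y,\xi).
\]
Since $|x-y|\le 4\delta\sqrt N$ one has $\log|x-y|\le \log(4\sqrt N\,\delta)$, and because this quantity is negative for $\delta<\delta_0$, the exponent $-a_1/\log|x-y|$ is positive and bounded above by $-a_1/\log(4\sqrt N\,\delta) =: -a/\log(b\delta)$ with a suitable choice of the constants $a,b>0$ (absorbing $4\sqrt N$ into $b$). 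Taking the infimum over $y\in\tQd\cap\Omega$ on the right, and noting that for $|\xi|\ge 1$ the first term in the max dominates while for $|\xi|<1$ we can bound everything by $c(1+|\xi|^{-a/\log(b\delta)})$, gives
\[
M(x,\xi)\le c\left(1+|\xi|^{-\frac{a}{\log(b\delta)}}\right)\Mjd(\xi).
\]

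The remaining point is to pass from $\Mjd(\xi)$ to its greatest convex minorant $(\Mjd(\xi))^{**}$. Here I would argue that $\Mjd(\xi)$ is already comparable to $(\Mjd(\xi))^{**}$ up to the same type of correction factor. The key observation is that $\Mjd$, being an infimum over $y$ of the $N$-functions $M(y,\cdot)$, need not itself be convex, but the log-H\"older estimate forces it to be "almost radially monotone" in a quantitative way: for any fixed direction the values of $\Mjd$ at two radii $r_1<r_2$ differ from the convex interpolation only by a factor controlled by $|\xi|^{-a/\log(b\delta)}$. Concretely, since $(\Mjd)^{**}\le \Mjd$ always (Remark~\ref{rem:2ndconj}), only the reverse inequality $\Mjd(\xi)\le c(1+|\xi|^{-a/\log(b\delta)})(\Mjd(\xi))^{**}$ needs proof, and this follows by writing a point $\xi$ as a convex combination realizing the envelope and using that on the cube $\tQd$ the oscillation of $M(\cdot,\xi)$ in $x$ is governed by \eqref{M2'}; combining this with the $N$-function normalization $M(x,1)\sim 1$ (which holds on bounded domains by Definition~\ref{def:Nf}, points 3 and 4) closes the estimate. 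Chaining the two comparisons and adjusting constants delivers \eqref{M2}.

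I expect the genuine obstacle to be the second comparison — relating $\Mjd$ to $(\Mjd)^{**}$. The first step is essentially a direct application of the hypothesis, but convexification is a nonlocal operation and one has to rule out that taking the convex envelope destroys too much; the resolution is that convexification of an $N$-function--like profile only affects it on a bounded range of arguments where, on a bounded domain, $M$ is comparable to $|\xi|$ up to constants, and there the correction term $|\xi|^{-a/\log(b\delta)}$ is harmless. A secondary technical nuisance is bookkeeping the constants so that the final exponent is exactly of the form $-a/\log(b\delta)$ with $\delta$-independent $a,b$; this is handled by absorbing all dimensional factors ($\sqrt N$, the factor $4$ from the doubled cube) into $b$ at the very end.
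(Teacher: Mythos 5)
Your proposal gets the skeleton right: you correctly factor the quotient as
\[
\frac{M(x,\xi)}{(\Mjd(\xi))^{**}}
= \frac{M(x,\xi)}{\Mjd(\xi)}\cdot\frac{\Mjd(\xi)}{(\Mjd(\xi))^{**}},
\]
which is exactly the decomposition the paper uses, and your treatment of the first factor (apply \eqref{M2'} between $x$ and a point realizing the infimum, use the diameter bound $4\delta\sqrt{N}<1/2$, absorb dimensional factors into $b$) matches the paper's Step 1. You also correctly flag that the second factor is the real obstacle.

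The gap is in how you actually close the second comparison. Two of the heuristics you invoke do not hold up. First, the claim that ``convexification of an $N$-function--like profile only affects it on a bounded range of arguments'' is false: $(\Mjd)^{**}$ can lie strictly below $\Mjd$ on arbitrarily large $|\xi|$, so you cannot dismiss the discrepancy as a phenomenon confined to a compact set where $M$ is uniformly comparable to $|\xi|$. Second, the appeal to the normalization $M(x,1)\sim 1$ does not enter: it plays no role in the paper's argument and does not by itself control $\Mjd/(\Mjd)^{**}$ on the affine pieces of the envelope. What the paper actually does is the following. On a maximal interval $[\xi_1,\xi_2]$ where $(\Mjd)^{**}$ is affine and touches $\Mjd$ at the endpoints, pick $y_1,y_2\in\tQd$ realizing the infimum at $\xi_1,\xi_2$, write $\tilde\xi=t\xi_1+(1-t)\xi_2$, and then chain (a) $\Mjd(\tilde\xi)\le M(y_2,\tilde\xi)$ by definition of the infimum, (b) $M(y_2,\tilde\xi)\le tM(y_2,\xi_1)+(1-t)M(y_2,\xi_2)$ by convexity of $M(y_2,\cdot)$ in the $\xi$--variable, and (c) the resulting ratio
\[
h(t)=\frac{tM(y_2,\xi_1)+(1-t)M(y_2,\xi_2)}{tM(y_1,\xi_1)+(1-t)M(y_2,\xi_2)}
\]
is monotone in $t$ (its derivative has constant sign), hence is maximized at $t=1$ and equals $M(y_2,\xi_1)/M(y_1,\xi_1)$, which log-H\"older controls via $|y_1-y_2|\le 4\delta\sqrt N$, followed by the observation $\xi_1\le\tilde\xi$ to pass from the exponent in $\xi_1$ to the one in $\xi$. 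Step (b) uses convexity of $M$ in $\xi$ — a different ingredient than the $x$--oscillation you emphasize — and step (c) is a nontrivial optimization over the interpolation parameter. Your sketch points in the direction of ``write $\xi$ as a convex combination realizing the envelope,'' which is the right first move, but without (b) and (c) the comparison does not actually close, and the fallback to $M(x,1)\sim 1$ is a wrong turn. So this is a genuine gap, not merely missing bookkeeping.
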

\begin{proof}[Proof. cf.~\cite{martin}]First, we fix an arbitrary $y\in Q^\delta_j$ and note that
		\begin{equation}\label{Quotient}
			\frac{M(y,\xi)}{(M^\delta_j)^{**}(\xi)}=\frac{M(y,\xi)}{M^\delta_j(\xi)}\frac{M^\delta_j(\xi)}{(M^\delta_j)^{**}(\xi)}.
		\end{equation}
		We estimate separately both quotients on the right hand side of the latter equality. By continuity of $M$ we find $\bar{y}\in \widetilde{Q}^\delta_j$ such that $M^\delta_j(\xi)=M(\bar{y},\xi)$. Then using condition~\eqref{M2'} and the fact that $|y-\bar{y}|\leq 3\delta\sqrt{d}<\frac{1}{2}$ we get
		\begin{equation}\label{FirQuoEst}
			\frac{M(y,\xi)}{M(\bar y,\xi)}\leq \max\{\xi^{-\frac{a_1}{\log|y-\bar y|}}, b_1^{-\frac{a_1}{\log|y-\bar y|}}\}\leq \max\{\xi^{-\frac{a_1}{\log(3\delta\sqrt{N})}}, b_1^{-\frac{a_1}{\log(3\delta\sqrt{N})}}\}.
		\end{equation}
		In order to estimate the second quotient in \eqref{Quotient} we observe first that if $\xi\in[0,\infty)$ is such that $M^\delta_j(\xi)=(M^\delta_j)^{**}(\xi)$ then the statement is obvious. Therefore we assume that $M^\delta_j(\xi_0)>(M^\delta_j)^{**}(\xi_0)$ at some $\xi_0$. Due to continuity of $M^\delta_j$ and $(M^\delta_j)^{**}$ there is a neighborhood $U$ of $\xi_0$ such that $M^\delta_j>(M^\delta_j)^{**}$ on $U$. Consequently, $(M^\delta_j)^{**}$ is affine on $U$. Moreover, Definition~\ref{def:Nf} implies that $m_1\leq M^\delta_j\leq m_2$, where $m_1$ and $m_2$ are convex. Therefore there are $\xi_1,\xi_2$ such that $U\subset(\xi_1,\xi_2)$, $M^\delta_j>(M^\delta_j)^{**}$ on $(\xi_1,\xi_2)$, $(M^\delta_j)^{**}(\xi_i)=M^\delta_j(\xi_i)$, $i=1,2$ and $(M^\delta_j)^{**}$ is an affine function on $[\xi_1,\xi_2]$, i.e.   
\begin{equation}\label{ConvexificationIsAffine}
	(M^\delta_j)^{**}(t\xi_1+(1-t)\xi_2)=tM^\delta_j(\xi_1)+(1-t)M^\delta_j(\xi_2),\qquad\text{for}\quad t\in[0,1].
\end{equation}
We note that we consider $\xi_1>0$, because it follows that $0=M^\delta_j(0)=(M^\delta_j)^{**}(0)$. Now, thanks to the continuity of $M$ we find $y_i\in\wt{Q}^\delta_j$ such that $M^\delta_j(\xi_i)=M(y_i,\xi_i)$, $i=1,2$. Consequently, it follows from \eqref{ConvexificationIsAffine} that
\begin{equation}\label{ConvexificationIsAffineII}
	(M^\delta_j)^{**}(t\xi_1+(1-t)\xi_2)=tM(y_1,\xi_1)+(1-t)M(y_2,\xi_2).
\end{equation}
Denoting $\tilde\xi=t\xi_1+(1-t)\xi_2$ we get
\begin{equation}\label{QuoBiConEst}
	\frac{M^\delta_j\left(\tilde\xi\right)}{(M^\delta_j)^{**}\left(\tilde\xi\right)}\leq \frac{M\left(y_2,\tilde\xi\right)}{tM(y_1,\xi_1)+(1-t)M(y_2,\xi_2)}\leq\frac{tM(y_2,\xi_1)+(1-t)M(y_2,\xi_2)}{tM(y_1,\xi_1)+(1-t)M(y_2,\xi_2)}.
\end{equation}
Next, we observe that the definition of $M^\delta_j$ implies $M(y_1,\xi_1)=M^\delta_j(\xi_1)\leq M(y_2,\xi_1)$. We can assume without loss of generality that
\begin{equation}\label{MXiOneIneq}
	M(y_1,\xi_1)< M(y_2,\xi_1)
\end{equation}
because for $M(y_1,\xi_1)= M(y_2,\xi_1)$ inequality \eqref{QuoBiConEst} implies $M^\delta_j\leq(M^\delta_j)^{**}$ on $[\xi_1,\xi_2]$. Since we have always $M^\delta_j\geq(M^\delta_j)^{**}$ we arrive at $M^\delta_j=(M^\delta_j)^{**}$ on $[\xi_1,\xi_2]$.

Let us consider a function $h:[0,1]\rightarrow\R$ defined by
\begin{equation*}
	h(t)=\frac{tM(y_2,\xi_1)+(1-t)M(y_2,\xi_2)}{tM(y_1,\xi_1)+(1-t)M(y_2,\xi_2)}.
\end{equation*}
Then we compute
\begin{equation*}
	h'(t)=\frac{(M(y_2,\xi_1)-M(y_1,\xi_1))M(y_2,\xi_2)}{(t(M(y_1,\xi_1)-M(y_2,\xi_2))+M(y_2,\xi_2))^2}.
\end{equation*}
Obviously, we have $h'>0$ on $(0,1)$ due to \eqref{MXiOneIneq}. Therefore the maximum of $h$ is attained at $t=1$, which implies
\begin{equation}
	\frac{M^\delta_j\left(\tilde\xi\right)}{(M^\delta_j)^{**}\left(\tilde\xi\right)}\leq\frac{M(y_2,\xi_1)}{M(y_1,\xi_1)}.
\end{equation}
Next, we apply condition~\eqref{M2'} and $\xi_1\leq\tilde\xi$ to infer
\begin{equation}\label{SecQuoEst}
	\frac{M^\delta_j\left(\tilde\xi\right)}{(M^\delta_j)^{**}\left(\tilde\xi\right)}\leq \max\{\xi_1^{\frac{-a_1}{\log|y_2-y_1|}}, b_1^{\frac{-a_1}{\log|y_2-y_1|}}\}\leq \max\{\xi^{\frac{-a_1}{\log|y_2-y_1|}}, b_1^{\frac{-a_1}{\log|y_2-y_1|}}\}\leq\max\{\xi^{\frac{-a_1}{\log(4\delta\sqrt{N})}}, b_1^{\frac{-a_1}{\log(4\delta\sqrt{N})}}\}
\end{equation}
since $y_1,y_2\in\tilde{Q}^\delta_j$ implies $|y_1-y_2|\leq 4\delta\sqrt{N}<\frac{1}{2}$. Combining \eqref{Quotient} with \eqref{FirQuoEst} and \eqref{SecQuoEst} yields
\begin{equation*}
\begin{split}	\frac{M(y,\xi)}{(M^\delta_j)^{**}(\xi)}\leq \max\{\xi^{\frac{-a_1}{\log(3\delta\sqrt{N})}}, b_1^{\frac{-a_1}{\log(3\delta\sqrt{N})}}\}\cdot \max\{\xi^{\frac{-a_1}{\log(4\delta\sqrt{N})}}, b_1^{\frac{-a_1}{\log(4\delta\sqrt{N})}}\}\leq \max\{\xi^{\frac{-2a_1}{\log(4\delta\sqrt{N})}}, b_1^{\frac{-2a_1}{\log(4\delta\sqrt{N})}}\}\\
\leq  \xi^{\frac{-2a_1}{\log(4\delta\sqrt{N})}}+ b_1^{\frac{-2a_1}{\log(4\delta\sqrt{N})}}\leq c \left(1+ |\xi|^{-\frac{a}{\log(b\delta )}} \right),\end{split}
\end{equation*}
which is the desired conclusion.
\end{proof}

\begin{defi}[$\Delta_2$-condition]\label{def:D2}
 We say that an $N$-function $M:\Omega\times\rn\to\r$ satisfies $\Delta_2$ condition if for a.e. $x\in\Omega$, there exists a constant $c>0$ and nonnegative integrable function $h:\Omega\to\r$ such that
\begin{equation}
\label{D2} M(x,2\xi)\leq cM(x,\xi)+h(x).
\end{equation}
\end{defi}

\section*{Appendix B}
We have two equivalent definitions of modular convergence.
\begin{defi}[Modular convergence]\label{def:convmod}
We say that a sequence $\{\xi_i\}_{i=1}^\infty$ converges modularly to $\xi$ in~$L_M(\Omega;\rn)$ (and denote it by $\xi_i\xrightarrow[i\to\infty]{M}\xi$), if 
\begin{itemize}
\item[i)] there exists $\lambda>0$ such that
\begin{equation*}
\int_{\Omega}M\left(x,\frac{\xi_i-\xi}{\lambda}\right)dx\to 0,
\end{equation*}
equivalently
\item[ii)] there exists $\lambda>0$ such that 
\begin{equation*}
 \left\{M\left(x,\frac{\xi_i}{\lambda}\right)\right\}_i \ \text{is uniformly integrable in } L^1(\Omega)\quad \text{and}\quad \xi_i\xrightarrow[]{i\to\infty}\xi \ \text{in measure};
\end{equation*}
\end{itemize}
\end{defi}

\begin{defi}[Biting convergence]\label{def:convbiting}
Let $f_n,f\in  L^1(\Omega)$ for every $n\in\N$. We say that a sequence $\{f_n\}_{n=1}^\infty$ converges in the sense of biting to $f$ in~$L^1(\Omega)$ (and denote it by $f_n\xrightarrow[]{b}f$), if  there exists a sequence of measurable $E_k$ -- subsets of $\Omega$, such that $\lim_{k\to\infty} |E_k|=0$, such that for every $k$ we have $f_n\to f$ in $L^1(\Omega\setminus E_k)$.
\end{defi}

\begin{defi}[Uniform integrability] We call a sequence  $\{f_n\}_{n=1}^\infty$ of measurable functions $f_n:\Omega\to \rn$ 
 uniformly integrable if
\[\lim_{R\to\infty}\left(\sup_{n\in\mathbb{N}}\int_{\{x:|f_n(x)|\geq R\}}|f_n(x)|dx\right)=0,\] 
equivalently (cf.~\cite{pgasgaw-stokes}) if
\begin{equation}
\label{uni-int-con2}
\forall_{\ve>0}\quad\exists_{\delta>0}\qquad \sup_{n\in\mathbb{N}}\int_\Omega \left(|f_n(x)|-\frac{1}{\sqrt{\delta}}\right)_+ dx\leq\ve,
\end{equation}
where we denote the positive part of function $f$ by $(f(x))_+:=\max\{f(x),0\}$.
 \end{defi}

We use the following results.
\begin{lem}[Modular-uniform integrability,~\cite{gwiazda2}]\label{lem:unif}
Let $M$ be an $N$-function and $\{f_n\}_{n=1}^\infty$ be a sequence of measurable functions such that $f_n:\Omega\to \rn$ and $\sup_{n\in\N}\int_\Omega M(x,f_n(x))dx<\infty$. Then the sequence $\{f_n\}_{n=1}^\infty$ is uniformly integrable.
\end{lem}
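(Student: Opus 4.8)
The statement to prove is Lemma~\ref{lem:unif} (Modular-uniform integrability): if $\sup_n\int_\Omega M(x,f_n(x))\,dx<\infty$ then $\{f_n\}$ is uniformly integrable. The plan is to use the superlinear growth of $M$ encoded in condition~3 of Definition~\ref{def:Nf}, namely $\lim_{|\xi|\to\infty}\mathrm{ess\,inf}_{x\in\Omega}\frac{M(x,\xi)}{|\xi|}=\infty$, to dominate the ``large values'' part of $|f_n|$ by $M(x,f_n)$ up to a vanishing factor.

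First I would record the hypothesis: let $C_0:=\sup_n\int_\Omega M(x,f_n(x))\,dx<\infty$. Fix $\varepsilon>0$. By condition~4 of Definition~\ref{def:Nf} there exists $R_0>0$ such that for all $|\xi|\ge R_0$ and a.e. $x\in\Omega$ we have $M(x,\xi)\ge \frac{C_0}{\varepsilon}|\xi|$, equivalently $|\xi|\le \frac{\varepsilon}{C_0}M(x,\xi)$. Now for any $R\ge R_0$ estimate, for each $n$,
\[
\int_{\{|f_n|\ge R\}}|f_n(x)|\,dx\le \frac{\varepsilon}{C_0}\int_{\{|f_n|\ge R\}}M(x,f_n(x))\,dx\le \frac{\varepsilon}{C_0}\int_\Omega M(x,f_n(x))\,dx\le \frac{\varepsilon}{C_0}\cdot C_0=\varepsilon,
\]
using nonnegativity of $M$ (which follows from $M(x,\xi)\ge 0$, itself a consequence of convexity, $M(x,0)=0$ and evenness). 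Taking the supremum over $n$ and then letting $R\to\infty$ gives $\lim_{R\to\infty}\sup_n\int_{\{|f_n|\ge R\}}|f_n|\,dx\le\varepsilon$; since $\varepsilon>0$ was arbitrary, the limit is $0$, which is exactly the definition of uniform integrability.

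There is essentially no hard step here — the argument is the classical de la Vallée-Poussin type criterion, with $M(x,\cdot)$ playing the role of the superlinear test function. The only point requiring a little care is that $M$ is $x$-dependent, so I must invoke the \emph{uniform} (essential-infimum) superlinearity in condition~4 rather than pointwise superlinearity, to obtain the threshold $R_0$ independent of $x$; once that is in hand the estimate above is uniform in both $x$ and $n$. One could alternatively derive the equivalent formulation~\eqref{uni-int-con2} directly by the same device, splitting $\bigl(|f_n(x)|-\tfrac{1}{\sqrt\delta}\bigr)_+$ on the set where $|f_n|$ exceeds the threshold, but the version above is the cleanest route.
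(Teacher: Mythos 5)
Your proof is correct; it is the standard de la Vall\'ee--Poussin argument that condition~4 of Definition~\ref{def:Nf} (you cite it first as ``condition~3'' -- a slip, since the displayed formula is condition~4) makes $M(x,\cdot)$ a superlinear test function uniform in $x$, and the paper itself does not give a proof, merely citing~\cite{gwiazda2}. One small technicality worth acknowledging: condition~4 gives, for each fixed $\xi$ with $|\xi|\ge R_0$, a null set $N_\xi\subset\Omega$ outside of which $M(x,\xi)\ge\tfrac{C_0}{\varepsilon}|\xi|$; to obtain a single null set valid for all such $\xi$ simultaneously (as your estimate implicitly uses when integrating over $\{|f_n|\ge R\}$), one should take the union of $N_\xi$ over a countable dense subset of $\{|\xi|\ge R_0\}$ and then extend to all $\xi$ by continuity of $M(x,\cdot)$. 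With that remark in place the argument is complete and is the natural route to the statement.
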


\begin{lem}[Density of simple functions, \cite{Musielak}]\label{lem:dens}
Suppose~\eqref{ass:M:int}. Then the set of simple functions integrable on $\Omega$ is dense in $L_M(\Omega)$ with respect to the modular topology.
\end{lem}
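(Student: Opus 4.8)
The plan is to approximate a general $\xi\in L_M(\Omega)$ in three stages: first by its truncations in value (which are bounded), then by genuine simple functions at each bounded level, and finally to merge the two approximations by a diagonal argument, checking at each stage convergence in the modular sense of Definition~\ref{def:convmod}\,i). To start, since $\xi\in L_M(\Omega)$ (Definition~\ref{def:MOsp}) there is $\lambda>0$ with $\iO M(x,\xi/\lambda)\,dx<\infty$; in particular, by property~4 of Definition~\ref{def:Nf}, $\xi$ is finite a.e. Set $\xi_k:=\xi\,\mathds{1}_{\{|\xi|\le k\}}$. Then $\xi_k-\xi=-\xi\,\mathds{1}_{\{|\xi|>k\}}$, so using $M(x,-z)=M(x,z)$ and $M(x,0)=0$,
\[0\le M\!\left(x,\tfrac{\xi_k-\xi}{\lambda}\right)=M\!\left(x,\tfrac{\xi}{\lambda}\right)\mathds{1}_{\{|\xi|>k\}}\le M\!\left(x,\tfrac{\xi}{\lambda}\right)\in L^1(\Omega),\]
and the left-hand side tends to $0$ a.e.\ as $k\to\infty$; the dominated convergence theorem gives $\iO M(x,(\xi_k-\xi)/\lambda)\,dx\to0$.

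Next, for each fixed $k$ I would approximate the bounded, $\rn$-valued measurable function $\xi_k$ uniformly by simple functions: partitioning the range ball $\overline{B(0,k)}$ into finitely many measurable pieces of diameter $<1/n$, picking a point in each and pulling back under $\xi_k$, one obtains simple functions $s_{k,n}$ with $\|s_{k,n}-\xi_k\|_{L^\infty(\Omega)}<1/n$. These are integrable because $\Omega$ is bounded, and they lie in $L_M(\Omega)$ precisely by assumption~\eqref{ass:M:int}, since $\iO M(x,s_{k,n}(x))\,dx=\sum_i\int_{A_i}M(x,c_i)\,dx<\infty$ is a finite sum of finite terms. To convert $L^\infty$-smallness into modular smallness, I would use property~3 of Definition~\ref{def:Nf}: for every $\varepsilon>0$ there is $\rho>0$ with $\mathrm{ess\,sup}_{x\in\Omega}M(x,z)/|z|\le\varepsilon$ whenever $|z|\le\rho$; by continuity of $M(x,\cdot)$ and a countable dense set of $z$'s this upgrades to "for a.e.\ $x$, $M(x,z)\le\varepsilon|z|$ for all $|z|\le\rho$ simultaneously". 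Hence, for $n$ so large that $1/(n\lambda)\le\rho$,
\[\iO M\!\left(x,\tfrac{s_{k,n}-\xi_k}{\lambda}\right)dx\le \varepsilon\,\frac{|\Omega|}{n\lambda},\]
so $\iO M(x,(s_{k,n}-\xi_k)/\lambda)\,dx\to0$ as $n\to\infty$, for each fixed $k$.

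Finally I would combine the two steps using convexity of $M(x,\cdot)$:
\[\iO M\!\left(x,\tfrac{s_{k,n}-\xi}{2\lambda}\right)dx\le\tfrac12\iO M\!\left(x,\tfrac{s_{k,n}-\xi_k}{\lambda}\right)dx+\tfrac12\iO M\!\left(x,\tfrac{\xi_k-\xi}{\lambda}\right)dx.\]
Given $m\in\N$, first choose $k$ so that the second integral is $<1/m$, then $n$ so that the first is $<1/m$, and put $\sigma_m:=s_{k,n}$. Then $\{\sigma_m\}$ is a sequence of integrable simple functions in $L_M(\Omega)$ with $\iO M(x,(\sigma_m-\xi)/(2\lambda))\,dx\to0$, i.e.\ $\sigma_m\xrightarrow{M}\xi$; since $\xi$ was arbitrary, the integrable simple functions are modularly dense in $L_M(\Omega)$.

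The hard part is the single measure-theoretic subtlety in the second stage: passing from "$\|s_{k,n}-\xi_k\|_{L^\infty}$ small" to "$\iO M(x,(s_{k,n}-\xi_k)/\lambda)\,dx$ small", where a priori the null set in the inequality $M(x,z)\le\varepsilon|z|$ depends on $z$; this is removed using continuity of $M$ in its second argument together with a countable dense set of directions and the finiteness of $|\Omega|$. Everything else — the value truncation via dominated convergence, the uniform approximability of bounded functions by simple functions, and the diagonal selection through convexity — is routine, and the role of assumption~\eqref{ass:M:int} is exactly to ensure that the approximating simple functions belong to $L_M(\Omega)$ in the first place.
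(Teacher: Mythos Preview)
Your proof is correct. The paper does not supply its own argument for this lemma but only remarks that the result ``can be obtained by the method of the proof of \cite[Theorem~7.6]{Musielak}''; your three-stage scheme (truncation in value handled by dominated convergence, uniform approximation of the bounded truncation by simple functions with modular smallness via property~3 of Definition~\ref{def:Nf}, and the convex splitting to merge the two stages) is precisely that standard method.
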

The above result can be obtained by the method of the proof of~\cite[Theorem~7.6]{Musielak}.

We need the following consequence of the Chacon Biting Lemma, \cite[Lemma~6.9]{pedr}.
\begin{theo}\label{theo:bitinglemma}Let $f_n\in   L^1(\Omega)$ for every $n\in\N$,   $f_n(x)\geq 0$ for every $n\in\N$ and a.e. $x$ in $\Omega$. Moreover, suppose $f_n\xrightarrow[]{b}f$ (cf.~Definition~\ref{def:convbiting}) and $\limsup_{n\to\infty}\int_\Omega f_n dx\leq \int_\Omega f dx.$ Then  $f_n\xrightharpoonup{}f$ in $L^1(\Omega)$ for $n\to\infty$.
\end{theo}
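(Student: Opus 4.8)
The statement to be proved is Theorem~\ref{theo:bitinglemma}, a consequence of the Chacon Biting Lemma: if $f_n \geq 0$, $f_n \in L^1(\Omega)$, $f_n \xrightarrow{b} f$, and $\limsup_n \int_\Omega f_n\,dx \leq \int_\Omega f\,dx$, then $f_n \rightharpoonup f$ weakly in $L^1(\Omega)$.

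\medskip

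\textbf{Plan of proof.} The scheme is to combine the biting convergence (which gives strong $L^1$ convergence away from a small exceptional set) with the $\limsup$ mass-control hypothesis to upgrade to genuine weak $L^1$ convergence on all of $\Omega$, using the Dunford--Pettis theorem as the workhorse. First I would record that biting convergence on $\Omega \setminus E_k$ implies, after passing to a suitable subsequence, that $f_n \to f$ a.e.\ in $\Omega$ (the sets $E_k$ shrink to null measure, so a diagonal argument yields a.e.\ convergence on $\bigcup_k (\Omega\setminus E_k)$, which is $\Omega$ up to a null set); in particular $f \geq 0$ a.e.\ and $f \in L^1(\Omega)$. Next, by Fatou's lemma applied to the a.e.-convergent nonnegative sequence, $\int_\Omega f\,dx \leq \liminf_n \int_\Omega f_n\,dx$, which together with the hypothesis $\limsup_n \int_\Omega f_n\,dx \leq \int_\Omega f\,dx$ forces $\int_\Omega f_n\,dx \to \int_\Omega f\,dx$. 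So the total masses converge.

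\medskip

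The key step is establishing uniform integrability of $\{f_n\}$. Fix $\varepsilon > 0$ and a measurable set $A \subset \Omega$. Choose $k$ large enough that $|E_k| < \delta$ (with $\delta$ to be fixed) and, since $f_n \to f$ in $L^1(\Omega\setminus E_k)$, the restricted family $\{f_n|_{\Omega\setminus E_k}\}$ is uniformly integrable there (an $L^1$-convergent sequence is uniformly integrable), so $\int_{A\setminus E_k} f_n\,dx < \varepsilon/3$ provided $|A|$ is small, uniformly in $n$. The contribution from $E_k$ is handled using the mass convergence: write $\int_{E_k} f_n\,dx = \int_\Omega f_n\,dx - \int_{\Omega\setminus E_k} f_n\,dx$, and since $\int_\Omega f_n\,dx \to \int_\Omega f\,dx$ and $\int_{\Omega\setminus E_k} f_n\,dx \to \int_{\Omega\setminus E_k} f\,dx$, we get $\limsup_n \int_{E_k} f_n\,dx \leq \int_{E_k} f\,dx$, which is small for $k$ large by absolute continuity of the integral of $f \in L^1$. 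Combining, $\sup_n \int_A f_n\,dx$ is controlled whenever $|A|$ is small, which is uniform integrability. By the Dunford--Pettis theorem (Theorem~\ref{theo:dunf-pet}), $\{f_n\}$ is relatively weakly compact in $L^1(\Omega)$; since every weakly convergent subsequence must have limit $f$ (the a.e.\ limit necessarily coincides with any weak limit), the whole sequence converges weakly to $f$ in $L^1(\Omega)$.

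\medskip

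\textbf{Main obstacle.} The delicate point is the bookkeeping with the exceptional sets $E_k$: they depend on $k$ but not on $n$, yet for a clean uniform-integrability estimate one wants to first fix a small target set $A$ (or threshold) and then choose $k$, while the passage $\int_{\Omega\setminus E_k} f_n \to \int_{\Omega\setminus E_k} f$ for fixed $k$ is only valid along the full sequence, not a subsequence chosen earlier for a.e.\ convergence. One must therefore be careful to extract the a.e.-convergent subsequence first, prove uniform integrability for that subsequence, deduce weak convergence along it, and then run a standard subsequence-of-subsequence argument to recover convergence of the original sequence (every subsequence has a further subsequence converging weakly to $f$). The positivity hypothesis $f_n \geq 0$ is essential precisely here, both for Fatou and for splitting $\int_\Omega = \int_{E_k} + \int_{\Omega\setminus E_k}$ with nonnegative pieces.
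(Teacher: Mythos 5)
The paper does not prove this statement itself; it is quoted as a known consequence of Chacon's biting lemma with a citation to \cite{pedr}, so there is no internal proof to compare against. Your argument is the standard route to this corollary and is essentially correct: uniform integrability plus the Dunford--Pettis theorem gives relative weak $L^1$-compactness of $\{f_n\}$, and the biting convergence identifies every weak subsequential limit as $f$.

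Two remarks on economy and precision. First, you do not need to extract an a.e.-convergent subsequence in order to get the mass identity. Since $f_n\ge 0$, one has directly, along the full sequence,
\[
\int_\Omega f_n\,dx \;\ge\; \int_{\Omega\setminus E_k} f_n\,dx \;\xrightarrow[n\to\infty]{}\; \int_{\Omega\setminus E_k} f\,dx ,
\]
so $\liminf_n\int_\Omega f_n\,dx \ge \int_{\Omega\setminus E_k} f\,dx$; letting $k\to\infty$ (using $|E_k|\to 0$ and $f\in L^1$) gives $\liminf_n\int_\Omega f_n\,dx \ge \int_\Omega f\,dx$, which combined with the hypothesis yields $\int_\Omega f_n\,dx\to\int_\Omega f\,dx$ with no subsequence bookkeeping and no appeal to Fatou. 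Second, the bound on $\int_{E_k}f_n\,dx$ extracted from the mass convergence holds only for $n$ large; the finitely many remaining indices should be absorbed by further shrinking $\delta$, each $f_n$ being individually absolutely continuous — this is routine but worth saying. Finally, identification of the weak limit is cleaner if you avoid a.e.\ convergence altogether: if $f_{n_j}\rightharpoonup g$ in $L^1(\Omega)$, then on each $\Omega\setminus E_k$ the subsequence converges both weakly to $g$ and strongly to $f$, hence $g=f$ a.e.\ on $\Omega\setminus E_k$, and $|E_k|\to 0$ forces $g=f$ a.e.\ in $\Omega$. With these simplifications the subsequence-of-subsequences scaffolding in your final paragraph disappears entirely, but nothing you wrote is actually wrong.
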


\begin{theo}[The Vitali Convergence Theorem]\label{theo:VitConv} Let $(X,\mu)$ be a positive measure space, $\mu(X)<\infty $, and $1\leq p<\infty$. If $\{f_{n}\}$ is uniformly integrable in $L^p_\mu$,   $f_{n}(x)\to f(x)$ in measure  and $|f(x)|<\infty $  a.e. in $X$, then  $f\in  {L}^p_\mu(X)$
and  $f_{n}(x)\to f(x)$ in  ${L}^p_\mu(X)$.
\end{theo}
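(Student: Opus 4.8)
The plan is to argue in two stages: first identify $f$ as an element of $L^p_\mu(X)$ via Fatou, and then promote convergence in measure to convergence in $L^p_\mu(X)$ by a set-splitting estimate that exploits the equi-absolute continuity of the integrals $\int_E|f_n|^p\,d\mu$ hidden in the uniform integrability hypothesis (read as uniform integrability of $\{|f_n|^p\}$ in $L^1_\mu$, cf.~\eqref{uni-int-con2}). First I would note that, since $\mu(X)<\infty$, uniform integrability forces $\sup_n\int_X|f_n|^p\,d\mu<\infty$: choosing $R$ with $\sup_n\int_{\{|f_n|^p\ge R\}}|f_n|^p\,d\mu\le 1$ gives $\int_X|f_n|^p\,d\mu\le R\mu(X)+1=:C$ for every $n$. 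Convergence in measure yields a subsequence with $f_{n_k}\to f$ $\mu$-a.e.; the hypothesis $|f|<\infty$ a.e. makes $|f|^p$ well defined a.e. and equal to $\lim_k|f_{n_k}|^p$ a.e., so Fatou's lemma gives $\int_X|f|^p\,d\mu\le\liminf_k\int_X|f_{n_k}|^p\,d\mu\le C$, i.e. $f\in L^p_\mu(X)$.

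\textbf{Stage 2 ($f_n\to f$ in $L^p_\mu(X)$).} Next I would record the key equi-absolute continuity statement: for every $\varepsilon>0$ there is $\delta>0$ such that every measurable $E\subset X$ with $\mu(E)<\delta$ satisfies simultaneously $\sup_n\int_E|f_n|^p\,d\mu<\varepsilon$ and $\int_E|f|^p\,d\mu<\varepsilon$. Indeed, pick $R$ with $\sup_n\int_{\{|f_n|^p\ge R\}}|f_n|^p\,d\mu<\varepsilon/2$; then $\int_E|f_n|^p\,d\mu\le R\mu(E)+\varepsilon/2<\varepsilon$ as soon as $\mu(E)<\varepsilon/(2R)$, and one couples this threshold with the standard absolute-continuity $\delta$ for the single function $|f|^p\in L^1_\mu$ furnished by Stage 1, keeping the smaller of the two. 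Now, given $\varepsilon>0$, I would choose $\eta>0$ with $\eta^p\mu(X)<\varepsilon$, take $\delta$ as above, and use convergence in measure of the full sequence to find $N$ with $\mu(\{|f_n-f|\ge\eta\})<\delta$ for all $n\ge N$. Writing $E_n:=\{|f_n-f|\ge\eta\}$, splitting $X=E_n\cup(X\setminus E_n)$, and using $|a-b|^p\le 2^{p-1}(|a|^p+|b|^p)$ on $E_n$, for $n\ge N$ one obtains
\[
\int_X|f_n-f|^p\,d\mu\le 2^{p-1}\Big(\int_{E_n}|f_n|^p\,d\mu+\int_{E_n}|f|^p\,d\mu\Big)+\eta^p\mu(X)<(2^p+1)\,\varepsilon .
\]
Since $\varepsilon>0$ is arbitrary, $f_n\to f$ in $L^p_\mu(X)$; the full sequence converges because convergence in measure of the full sequence is used directly here, the subsequence of Stage 1 serving only to pin down the limit.

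\textbf{Main obstacle.} The statement is classical, so there is no genuine difficulty; the only delicate point is the bookkeeping of the nested choices of $\varepsilon$, $\eta$, $R$ and $\delta$ — in particular extracting the \emph{uniform-in-$n$} equi-absolute continuity of $E\mapsto\int_E|f_n|^p\,d\mu$ from the definition of uniform integrability, and pairing it with the (single-function) absolute continuity of $E\mapsto\int_E|f|^p\,d\mu$ guaranteed by Stage 1. I would also flag that $\mu(X)<\infty$ is used twice and cannot be dropped: once to pass from uniform integrability to the uniform $L^1_\mu$ bound on $\{|f_n|^p\}$, and once to dominate $\int_{X\setminus E_n}|f_n-f|^p\,d\mu$ by $\eta^p\mu(X)$.
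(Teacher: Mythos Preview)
Your proof is correct and is the standard textbook argument. Note, however, that the paper does not actually prove Theorem~\ref{theo:VitConv}: it is recorded in Appendix~B as a classical tool and merely cited where needed, so there is no ``paper's own proof'' to compare against.
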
 

\begin{theo}[The Dunford-Pettis Theorem]\label{theo:dunf-pet}
A sequence $\{f_n\}_n$ is uniformly integrable in $L^1(\Omega)$ if and only if it is relatively compact in the weak topology.
\end{theo}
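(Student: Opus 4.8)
The statement is the classical Dunford--Pettis criterion, and the plan is to prove the two implications separately after two preliminary observations. First, since $\Omega$ is bounded we have $|\Omega|<\infty$, and then uniform integrability in the sense used here already entails $L^1$-boundedness: choosing $R_0$ with $\sup_n\int_{\{|f_n|\ge R_0\}}|f_n|\,dx\le1$ gives $\|f_n\|_{L^1(\Omega)}\le R_0|\Omega|+1$ for all $n$. Second, by the Eberlein--\v{S}mulian theorem, ``relative compactness in the weak topology'' of a sequence in the Banach space $L^1(\Omega)$ amounts to: every subsequence admits a sub-subsequence converging weakly in $L^1(\Omega)$; I will verify exactly this.

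For the implication ``uniform integrability $\Rightarrow$ relative weak compactness'' I would argue by truncation and diagonalisation. Write $T_j$ for truncation at height $j$, $T_j(t)=\max\{-j,\min\{j,t\}\}$, and set $\omega(j):=\sup_n\int_{\{|f_n|\ge j\}}|f_n|\,dx\to0$. For each fixed $j$ the family $\{T_j\circ f_n\}_n$ is bounded in $L^\infty(\Omega)\subset L^2(\Omega)$, and $L^2(\Omega)$ is reflexive; a diagonal argument over $j\in\N$ then produces one subsequence (not relabelled) and functions $g_j$ with $T_j\circ f_n\rightharpoonup g_j$ in $L^2(\Omega)$, hence in $L^1(\Omega)$, as $n\to\infty$. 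Since $\|f_n-T_j\circ f_n\|_{L^1(\Omega)}\le\omega(j)$ uniformly in $n$, weak lower semicontinuity of the $L^1$-norm yields $\|g_j-g_{j'}\|_{L^1(\Omega)}\le\omega(\min\{j,j'\})$, so $(g_j)_j$ is Cauchy and $g_j\to f$ in $L^1(\Omega)$ for some $f\in L^1(\Omega)$. Finally, for $\varphi\in L^\infty(\Omega)$ I would use the splitting
\[\Big|\int_\Omega(f_n-f)\varphi\,dx\Big|\le\|\varphi\|_{L^\infty(\Omega)}\,\omega(j)+\Big|\int_\Omega(T_j\circ f_n-g_j)\varphi\,dx\Big|+\|\varphi\|_{L^\infty(\Omega)}\,\|g_j-f\|_{L^1(\Omega)},\]
fixing $j$ large and then letting $n\to\infty$, to conclude $f_n\rightharpoonup f$ weakly in $L^1(\Omega)$.

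For the converse, ``relative weak compactness $\Rightarrow$ uniform integrability'', I would argue by contradiction. If uniform integrability fails, negating the definition (and using $L^1$-boundedness, which holds because weakly relatively compact sets are bounded, together with $f_n\in L^1(\Omega)$ for each fixed $n$) produces $\varepsilon_0>0$, a subsequence $(f_{n_k})_k$ and measurable sets $E_k\subset\Omega$ with $|E_k|\to0$ and $\int_{E_k}|f_{n_k}|\,dx\ge\varepsilon_0$. By hypothesis together with Eberlein--\v{S}mulian I may also assume $f_{n_k}\rightharpoonup f$ weakly in $L^1(\Omega)$. The naive contradiction is blocked because the natural test function $\mathbf 1_{E_k}\operatorname{sgn}(f_{n_k})$ changes with $k$; the remedy is to split $E_k=E_k^+\cup E_k^-$ according to the sign of $f_{n_k}$ and to pass to the finite signed measures $\nu_k(A):=\int_A f_{n_k}\,dx$, each absolutely continuous with respect to Lebesgue measure on $\Omega$. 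Testing weak convergence against $\mathbf 1_A\in L^\infty(\Omega)$ gives $\nu_k(A)\to\int_A f\,dx$ for every measurable $A$, so by the Vitali--Hahn--Saks theorem the $\nu_k$ are uniformly absolutely continuous, i.e.\ $\sup_k|\nu_k(A)|\to0$ as $|A|\to0$. Since $|E_k^\pm|\le|E_k|\to0$, this forces $\nu_k(E_k^\pm)\to0$, whence $\int_{E_k}|f_{n_k}|\,dx=\nu_k(E_k^+)-\nu_k(E_k^-)\to0$, contradicting $\int_{E_k}|f_{n_k}|\,dx\ge\varepsilon_0$.

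The first implication is soft. The main obstacle is the converse: the direct duality argument fails because the optimal test function moves with the index, and I expect the decisive input to be the Vitali--Hahn--Saks theorem on setwise-convergent sequences of $\mu$-absolutely-continuous measures (or, equivalently, a hands-on sliding-hump / Baire-category argument yielding the same uniform absolute continuity).
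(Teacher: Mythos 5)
The paper does not prove this statement: Theorem~\ref{theo:dunf-pet} is recorded in Appendix~B as a classical fact to be cited, with no argument supplied. Your proposal is therefore not in competition with a proof in the paper, and I simply check it on its own merits.

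Your argument is correct and is one of the standard routes to Dunford--Pettis. In the forward direction, the truncation-plus-diagonalisation scheme is sound: the bound $\|f_n - T_j\circ f_n\|_{L^1}\le\omega(j)$, the weak lower semicontinuity step giving $\|g_j-g_{j'}\|_{L^1}\le\omega(\min\{j,j'\})$, and the three-term splitting against a fixed $\varphi\in L^\infty$ are all valid; the extraction for an arbitrary subsequence of $\{f_n\}$ (rather than just one subsequence) follows by the identical argument since any subfamily inherits the uniform integrability, so Eberlein--\v{S}mulian is applied legitimately. In the converse direction you correctly observe that the na\"{\i}ve duality pairing fails because the optimal test function drifts with $k$, and the remedy you propose --- pass to the signed measures $\nu_k(A)=\int_A f_{n_k}\,dx$, note that weak $L^1$-convergence gives setwise convergence $\nu_k(A)\to\int_A f\,dx$, and invoke Vitali--Hahn--Saks (or Nikod\'ym) to upgrade this to uniform absolute continuity --- is exactly the decisive input, and the bookkeeping with $E_k^\pm$ and the fact that a single fixed $f_n\in L^1$ cannot witness the failure (so that the offending indices $n_k$ escape to infinity, allowing a further weakly convergent subsequence) closes the contradiction. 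In short: a complete, correct proof of a result the paper takes for granted.
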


\begin{lem} \label{lem:TM1}
 Suppose $w_n\xrightharpoonup[n\to\infty]{}w$ in $L^1(\Omega)$, $v_n,v\in L^\infty(\Omega)$, and $v_n\xrightarrow[n\to\infty]{a.e.}v$. Then \[\int_\Omega w_n v_n\,dx \xrightarrow[n\to\infty]{}\int_\Omega w v\,dx.\]
\end{lem}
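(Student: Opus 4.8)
The plan is to exploit a Dunford--Pettis / Egorov argument. By the Dunford--Pettis Theorem (Theorem~\ref{theo:dunf-pet}) the weakly convergent sequence $\{w_n\}$ is uniformly integrable in $L^1(\Omega)$; let $\|v_n\|_{L^\infty}\le C$ for all $n$ and $\|v\|_{L^\infty}\le C$ as well (the pointwise a.e.\ limit of functions bounded by $C$ is bounded by $C$). Write the difference as
\[
\int_\Omega w_n v_n\,dx-\int_\Omega w v\,dx
=\underbrace{\int_\Omega w_n (v_n-v)\,dx}_{=:J_n}+\underbrace{\int_\Omega (w_n-w) v\,dx}_{=:K_n}.
\]
The term $K_n\to 0$ immediately, because $v\in L^\infty(\Omega)$ is a legitimate test function against $w_n\xrightharpoonup{}w$ in $L^1(\Omega)$. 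Everything therefore reduces to showing $J_n\to 0$.

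For $J_n$ I would fix $\varepsilon>0$ and use uniform integrability of $\{w_n\}$ to pick $R>0$ with $\sup_n\int_{\{|w_n|\ge R\}}|w_n|\,dx\le\varepsilon$. Split $\Omega$ accordingly:
\[
|J_n|\le \int_{\{|w_n|\ge R\}}|w_n|\,|v_n-v|\,dx+\int_{\{|w_n|< R\}}|w_n|\,|v_n-v|\,dx
\le 2C\varepsilon + R\int_\Omega |v_n-v|\,dx.
\]
Since $|v_n-v|\le 2C$ and $\Omega$ is finite measure, $v_n\to v$ a.e.\ gives, by the Dominated Convergence Theorem, $\int_\Omega|v_n-v|\,dx\to 0$. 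Hence $\limsup_n|J_n|\le 2C\varepsilon$, and letting $\varepsilon\to 0$ yields $J_n\to 0$. Combining with $K_n\to 0$ finishes the proof.

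The only mild subtlety — and the step I would be most careful with — is the interplay of the two smallness mechanisms: one must choose $R$ \emph{before} invoking the convergence $\int_\Omega|v_n-v|\,dx\to 0$, so that $R$ is a fixed constant when the dominated-convergence limit is taken; otherwise the term $R\int_\Omega|v_n-v|\,dx$ cannot be controlled. Once the order of quantifiers is arranged this way the estimate is routine, and no property of $w_n$ beyond uniform integrability (equivalently, weak $L^1$-compactness) is needed.
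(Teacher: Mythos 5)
Your proof is correct and uses the standard decomposition-plus-uniform-integrability argument. The paper states Lemma~\ref{lem:TM1} in Appendix~B without proof, so there is no official argument to compare against; the one you give is the natural one. You write
\[
\int_\Omega w_n v_n\,dx-\int_\Omega w v\,dx = J_n + K_n,
\]
kill $K_n$ by duality of $L^1$ against the fixed test function $v\in L^\infty(\Omega)$, and kill $J_n$ by combining Dunford--Pettis uniform integrability of $\{w_n\}$ with dominated convergence for $\int_\Omega|v_n-v|\,dx$ on the finite-measure set $\Omega$. Your remark about the order of quantifiers --- fix $R$ via uniform integrability \emph{before} letting $n\to\infty$, so that $R$ is a constant when you invoke dominated convergence --- identifies the one genuinely delicate step, and you handle it correctly.

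One point you should make explicit rather than slip in silently: you write ``let $\|v_n\|_{L^\infty}\le C$ for all $n$,'' but the statement of the lemma literally only says $v_n,v\in L^\infty(\Omega)$. A uniform $L^\infty$ bound on the sequence is an extra hypothesis, and it is genuinely needed: taking $\Omega=(0,1)$, $w_n=w\equiv 1$ and $v_n=n\,\mathds{1}_{(0,1/n)}$ gives $v_n\to 0=v$ a.e.\ with each $v_n\in L^\infty$, yet $\int_\Omega w_n v_n\,dx=1\nrightarrow 0=\int_\Omega wv\,dx$. The bound does hold everywhere the paper applies the lemma (the relevant $v_n$ are cutoffs such as $\mathds{1}_{\{k<|u_s|\}}\psi_l(u_s)$ or differences of truncations like $T_k(u)-(T_k(u))_\delta$, all uniformly bounded by explicit constants), so the lemma is used in its intended regime. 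But the hypothesis should be read as ``$\{v_n\}$ is uniformly bounded in $L^\infty(\Omega)$,'' and your write-up should say so, because the two estimates $\int_{\{|w_n|\ge R\}}|w_n|\,|v_n-v|\,dx\le 2C\varepsilon$ and $|v_n-v|\le 2C$ (needed for DCT) both collapse without it.
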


\section*{Appendix C}

\begin{proof}[Proof of Theorem~\ref{theo:approx}]The proof is divided into four steps. We start with the case of star-shaped domain and then, in the fourth step, we turn to any Lipschitz domain.

\medskip 

\textbf{Step 1.} Let us assume, that $\Omega$ is a star-shape domain with respect to the ball $B(0, r)$ (i.e. with respect to any point of this ball). For $0 < \delta < r/4$, we set $\kappa_\delta=1-\frac{2\delta}{r}$. It holds that
\[\kappa_\delta \Omega + \delta B(0, 1) \subset \Omega.\]
For a measurable function $\xi:\rn\to\rn$ with $\mathrm{supp}\,\xi\subset\Omega$, we define 
\begin{equation}
\label{xid}\xi_\delta(x) = \int_\Omega \vr_\delta( x-y)\xi (\kappa_\delta y)dy=
  \int_{B(0,\delta)} \vr_\delta(y)\xi (\kappa_\delta (x-y))dy,
\end{equation} 
where $ \vr_\delta(x)=\vr(x/\delta)/\delta^N$ is a standard regularizing kernel on $\rn$  (i.e. $\vr\in C^\infty(\rn)$,
$\mathrm{supp}\,\vr\subset\subset B(0, 1)$ and $\iO \vr(x)dx = 1$, $\vr(x) = \vr(-x)\geq 0$). Let us notice that $\xi_\delta\in C_0^\infty(\rn;\rn)$.

\medskip

\textbf{Step 2.} We show that the family of operators $(\xi_\delta)_\delta$ is uniformly bounded from $L_M(\Omega;\rn)$ to $L_M(\Omega;\rn)$. Without loss of generality we assume
\begin{equation}
\label{xi<1}\|\xi\|_{L^1(\Omega;\rn)}\leq 1.
\end{equation}
We have to show that\begin{equation}
\label{unifMxid}\iO M(x,\xi_\delta(x))dx\leq C\iO M(x,\xi (x))dx
\end{equation}
for every suffciently small $\delta$.

We consider $M_j^\delta(\xi)$ given by~\eqref{Mjd} and $\Mss$, see~Remark~\ref{rem:2ndconj}. Since $M(x,\xi_\delta(x))=0$ whenever $\xi_\delta(x)=0$, we have
\begin{equation}
\label{M:div-mult}\begin{split}
\iO M(x,\xi_\delta(x))dx=\sum_{j=1}^{N_\delta} \iQd M(x,\xi_\delta(x))dx=\\=\sum_{j=1}^{N_\delta} \iQdn \frac{M(x,\xi_\delta(x))}{\Msdx}{\Msdx}dx.\end{split}
\end{equation}

Our aim is to show now the following uniform bound
\begin{equation}
\label{M/M<c}\frac{M(x,\xi_\delta(x))}{\Msdx}\leq c
\end{equation}
for  sufficiently small $\delta>0$, $x\in\Qd\cap\Omega$ with $c$ independent of $\delta,x$ and $j$.  Let us fix an arbitrary cube and take $x\in \Qd$. For sufficiently small $\delta$ (i.e. $\delta< \delta_1:=\min\{ {r}/{4},\delta_0\}$), due to~\eqref{M2}, we obtain \begin{equation}
\label{M/M<xi}\frac{M(x,\xi_\delta(x))}{\Msdx} 
\leq   c \left(1+ |\xi_\delta(x)|^{-\frac{a}{\log(b\delta )}} \right).
\end{equation}

To estimate the right--hand side of~\eqref{M/M<xi} we consider~\eqref{xid}. Denote \[K=\sup_{B(0,1)}|\vr(x)|.\]
Note that for any $x,y\in\Omega$   and each $\delta>0$  we have
\[\vr_\delta(x-y)\leq  {K}/{\delta^N}.\]
Therefore, taking into account~\eqref{xi<1} we get
\begin{equation}
\label{xidest}\begin{split}|\xi_\delta(x)|& = \left| \int_\Omega \vr_\delta( x-y)\xi (\kappa_\delta y)dy\right|\\&\leq \frac{K}{\delta^N }  \int_{\Omega}  |\xi (\kappa_\delta y)|dy \leq \frac{K}{\delta^N\kappa_\delta}\|\xi\|_{L^1(\Omega;\rn)}\leq \frac{2K}{\delta^N}.\end{split}
\end{equation}
Note that $(2 K)^{-a/\log (b\delta )}\leq (4 K)^{-a/\log (b\delta_0 )}$ and \[\left|   {\delta^N} \right|^{ \frac{a}{\log (b\delta )}}=\exp \frac{aN \log  \delta}{\log (b\delta )},\]
which is bounded for $\delta\in [0,\delta_0]$. We combine this with~\eqref{M/M<xi} and~\eqref{xidest} to get\begin{equation}
\label{M/M<bezxi}\frac{M(x,\xi_\delta(x))}{(M_j^\delta(\xi_\delta(x)))^{**}} \leq c \left(1+ \left|4\frac{K}{\delta^N }\right|^{-\frac{a}{\log(b\delta )}} \right)\leq c.
\end{equation}
Thus, we have obtained~\eqref{M/M<c}. Now, starting from~\eqref{M:div-mult}, noting~\eqref{M/M<c}  and the fact that $\Mss$=0 if and only if $\xi=0$, we observe \[
\begin{split}
\iO M(x,\xi_\delta(x))dx &=\sum_{j=1}^{N_\delta} \iQdn \frac{M(x,\xi_\delta(x))}{\Msdx}{\Msdx}dx\leq \\
&\leq c\sum_{j=1}^{N_\delta} \iQdn {\Msdx}dx\leq\\
&\leq c\sum_{j=1}^{N_\delta} \iQd \   {{\Msd}\left( \int_{B(0,\delta)} \vr_\delta(y)\xi (\kappa_\delta (x-y))dy\right)}\mathds{1}_{\Qd\cap\Omega}(x) dx\leq\\
&\leq c\sum_{j=1}^{N_\delta} \int_\rn  \  {{\Msd}\left( \int_{B(0,\delta)} \vr_\delta(y)\xi (\kappa_\delta (x-y))\mathds{1}_{\Qd\cap\Omega}(x)dy\right)} dx\leq\\
&\leq  c\sum_{j=1}^{N_\delta} \int_\rn  {{\Msd}\left( \int_{B(0,\delta)} \vr_\delta(y)\xi (\kappa_\delta (x-y))\mathds{1}_{\tQd\cap\Omega}(x-y)dy\right)} dx.\end{split}
\]
Note  that by applying the Jensen inequality  the right-hand side above can be estimated by the following quantity
\[ \begin{split} 
& \quad \ c\sum_{j=1}^{N_\delta} \int_\rn \int_{\rn} \vr_\delta(y) {{\Msd}\left( \xi (\kappa_\delta (x-y))\mathds{1}_{\tQd\cap\Omega}(x-y) \right)} dy\,dx\leq\\
& \leq c \| \vr_\delta\|_{L^1({B(0,\delta);\rn})}\sum_{j=1}^{N_\delta}\int_\rn {{\Msd}\left( \xi (\kappa_\delta z)\mathds{1}_{\tQd\cap\Omega}(z) \right)}  dz\leq\\
&\leq c  \sum_{j=1}^{N_\delta}  \int_{\tQd\cap\Omega} {{\Msd}\left( \xi (\kappa_\delta z) \right)} dz.\end{split}
\] 
We applied inequality for convolution, boundedness of $\vr_\delta$, once again  the fact that $\Mss$=0 if~and only if~$\xi=0$. Then, by the definition of  $M_j^\delta(\xi)$, i.e.~\eqref{Mjd} and properties of $\Mss$, see~Remark~\ref{rem:2ndconj}, we realize that
\[ \begin{split} c  \sum_{j=1}^{N_\delta}  \int_{\tQd\cap\Omega} {{\Msd}\left( \xi (\kappa_\delta z) \right)} dz&\leq c'\sum_{j=1}^{N_\delta}  \int_{{\kappa_\delta}\tQd} {M\left(x, \xi (x) \right)}  dx\leq c'\sum_{j=1}^{N_\delta}  \int_{{2}\tQd} {M\left(x, \xi (x) \right)}  dx\leq \\&\leq C\int_\Omega {M\left(x, \xi (x) \right)} dx.\end{split}
\]
The last inequality above stands for computation of~a~sum taking into account the measure of~repeating parts of cubes.

We get~\eqref{unifMxid} by summing up the estimates of this step.

\medskip

{\bf Step 3.} Fix arbitrary $\vp\in V_0^M$ and recall definition of the cadidate for approximating family~\eqref{xid}. We are going to show that (still in the case of star-shape domains) it holds that
$$\int_\Omega M\left(x, \frac{(\nabla \vp)_\delta- \nabla \vp}{\lambda}\right) dx \xrightarrow[]{\delta\to 0} 0. $$

Fix $\sigma$ to be specified later and recall $C$ from~\eqref{unifMxid}. By Lemma~\ref{lem:dens} and  continuity of $M$ we can choose family of measurable sets  $\{ E_j \}_{j=0}^n$  such that $\bigcup_{j=0}^n E_j = \Omega$ and a simple vector valued function 
\[E^n(x)=\sum_{j=0}^n \mathds{1}_{E_j}(x) \va_{j}(x),\]
such that
	\begin{equation}\label{IE:aw15} 
	\int_\Omega M\left( x,  \frac{ E^n - \nabla \vp }{\frac{1}{3}\lambda} \right) dx < 
	\frac{\sigma}{C}.
	\end{equation}   Then by \eqref{unifMxid} we have 
	\begin{equation}\label{IE:aw16} 
	\int_\Omega M\left( x, 
	\frac{ (\nabla \vp -E^{n}  )_\dep  }{ \frac{1}{3} \lambda } \right) \,dx=\int_\Omega M\left( x, 
	\frac{ (\nabla \vp)_\dep -(E^{n})_\dep  }{ \frac{1}{3} \lambda } \right) \,dx < 
	\sigma.
	\end{equation}   Convexity of $M(x, \cdot)$ implies 
	\begin{equation*}
	\begin{split}
	&\int_\Omega 
	 M \left( x, \frac{ (\nabla \vp)_{\dep} - \nabla \vp }{ \lambda }\right) \,dx =\\
	& = \int_\Omega 
	M \left( x, \frac{ (\nabla \vp)_{\dep} -(E^n)_{\dep} 
	+  (E^n)_{\dep} -E^n +E^n - \nabla \vp}{ \lambda }\right) \,dx\\
	& \leq
	\frac{1}{3} \int_\Omega M\left( x, 
	\frac{ (\nabla \vp)_{\dep} -  (E^n)_\dep  }{ \frac{1}{3} \lambda } \right) \,dx 
	+ \frac{1}{3} \int_\Omega M\left( x, 
	\frac{  (E^n)_{\dep} -E^n }{
	\frac{1}{3} \lambda } \right) \,dx \\
	& + \frac{1}{3} \int_\Omega M\left( x,  \frac{ E^n - \nabla \vp }{\frac{1}{3}\lambda} \right) \,dx .
	\end{split}
	\end{equation*}
Since we have already estimated the first and the last expression  on the right-hand side above, let us concentrate on the second one.
 The Jensen  inequality and then the Fubini theorem lead to
	\begin{equation}\label{IE:aw17}
	\begin{split}
	 & \int_\Omega M\left( x, 
	\frac{  (E^n)_{\dep} -E^n }{
	\frac{1}{3} \lambda } \right) \,dx\\
	&=\int_\Omega M\left( x, 
	 \frac{ \sum_{j=0}^n (\mathds{1}_{E_j}(x) \va_j(x))_{\dep} -\sum_{j=0}^n \mathds{1}_{E_j}(x) \va_j(x) }{
	\frac{1}{3} \lambda } \right) \,dx\\
	& = \int_\Omega M \left( x, \frac{3}{ \lambda} \int_{B(0,\delta)} \varrho_\delta(y) \sum_{j=0}^n [   \mathds{1}_{E_j}(\kappa_\delta(x -  y)) \va_j(\kappa_\delta(x -  y)) - \mathds{1}_{E_j}(x) \va_j (x) ]\,dy \right) \,dx
	\\ &
	\leq 
	\int_{B(0,\delta)} \varrho_\delta(y)  \left( \int_\Omega M \left( x, \frac{3}{\lambda} \sum_{j=0}^n [   \mathds{1}_{E_j}(\kappa_\delta(x -  y)) \va_j(\kappa_\delta(x -  y)) - \mathds{1}_{E_j}(x) \va_j (x) ] \right) \,dx \right) \,dy.
	\end{split}
	\end{equation}
Using the continuity of the shift operator in $L^1$ we observe that poinwisely 
	\[\frac{3}{\lambda} \sum_{j=0}^n [   \mathds{1}_{E_j}(\kappa_\delta(x -  y)) \va_j (\kappa_\delta(x -  y))- \mathds{1}_{E_j}(x) \va_j (x) ] \xrightarrow[]{\dep\to 0} 0.\] 
	Moreover, note that
	$$
	 M \left( x, \frac{3}{\lambda} \sum_{j=0}^n [   \mathds{1}_{E_j}(\kappa_\delta(x -  y)) \va_j (\kappa_\delta(x -  y))- \mathds{1}_{E_j}(x) \va_j (x) ] \right) \leq \sup_{ |\vec\eta|=1}M \left( x, \frac{3}{ \lambda} \sum_{j=0}^n| \va_j| \vec\eta \right)  <\infty
$$
and
the Lebesgue Dominated Convergence Theorem provides the right-hand side of \eqref{IE:aw17} converges to zero as $\delta\to 0$.

To sum up, regarding to arbitrariness of $\sigma >0$ in~\eqref{IE:aw15} and~\eqref{IE:aw16}, and to the convergence of the second term we get the claim.

\medskip

{\bf Step 4.} If $\Omega$ is a bounded Lipschitz domain in~$\rn$, then there exists a finite family of open sets
$\{\Omega_i\}_{i\in I}$ and a finite family of balls $\{ B^i\}_{i\in I}$ such that 
$$\Omega=\bigcup\limits_{i\in I}\Omega_i$$
and every set $\Omega_i$ is star-shaped with respect to ball $B^i$ of radius $r_i$ (see e.g. \cite{Novotny}). 
Let us  introduce the partition of unity $\theta_i$ with
 for $x\in\Omega$. 
Then one can decompose function $\vp$ in the following way
	$$\vp(x) = \sum_{i\in I} (\theta_i \vp )(x).$$
Let us notice that if $\nabla \vp \in L_M(\Omega;\rn)$ and $\vp \in L^\infty(\Omega)$, then  
$\nabla (\theta_i \vp) = (\vp \nabla \theta_i + \theta_i \nabla \vp) \in L_M(\Omega;\rn)$.
Therefore we can apply the previous arguments to every function $\theta_i \vp$ of a  support on a star-shaped domain $\Omega_i\subset\Omega$.

\end{proof}

\begin{proof}[Proof of Theorem~\ref{theo:Poincare}] The proof consist of three steps starting with the case of smooth and compactly supported functions on small cube, then turning to the Orlicz class and concluding the~claim on arbitrary bounded set.

\medskip

{\bf Step 1.} We start the proof for $u\in C_0^\infty(\Omega)$ with $\mathrm{supp} u\subset\subset [-\frac{1}{4},\frac{1}{4}]^N$. Let $u$ be extended by $0$ outside $\Omega$ and $\oN=(1,\dots,1)\in\rn.$ Note that
\[u(x)=\int_{-\frac{1}{2}}^0\sum_{j=1}^N \partial_j u(x+s\oN)ds=\int_0^{\frac{1}{2}}\sum_{j=1}^N \partial_j u(x+s\oN)ds\] 
and so
\[2u(x)=\int_{-\frac{1}{2}}^{\frac{1}{2}}\sum_{j=1}^N \partial_j u(x+s\oN)ds.\] 
Then we realize that for the constant $c=\sqrt{N}/2$ we have
\[ u(x)\leq \int_{-\frac{1}{2}}^{\frac{1}{2}}\frac{1}{2}\sum_{j=1}^N |\partial_j u(x+s\oN)|ds\leq \int_{-\frac{1}{2}}^{\frac{1}{2}}c \|\nabla u(x+s\oN)\|ds.\] 
Applying $m$, which is increasing, to both sides above and the Jensen inequality (note that our interval with  the Lebesgue measure is a probability space) we get
\[m(|u(x)|)\leq m\left(\int_{-\frac{1}{2}}^{\frac{1}{2}}c \|\nabla u(x+s\oN)\|ds\right)\leq \int_{-\frac{1}{2}}^{\frac{1}{2}}   m\left(c\|\nabla u(x+s\oN)\|\right)ds.\]
Integrating over $\Omega$ and changing the order of integration we obtain
\[\begin{split}
\int_\Omega m(|u(x)|)dx&\leq \int_\Omega \int_{-\frac{1}{2}}^{\frac{1}{2}}  m\left(c\|\nabla u(x+s\oN)\|\right)dsdx=   \int_{-\frac{1}{2}}^{\frac{1}{2}} \int_\Omega  m\left(c\|\nabla u(x+s\oN)\|\right) dx ds\leq\\
&\leq   \| 1\|_{L^1\left(-d,d\right)}\sup_{s\in \left(-d,d\right)} \int_\Omega m\left(c\|\nabla u(x+s\oN)\|\right) dx =  \int_\Omega m\left(c\|\nabla u(x)\|\right) dx.\end{split}\]

Since $m\in\Delta_2$, we apply~\eqref{D2} (with constant $c_{m,\Delta_2}$ and no $x$-dependence) $k$ times with the~smallest $k$, such that $c(\Omega,N)<2^k$. Then, due to monotonicity of $m$, we get 
\[\int_{\Omega_1} m(c(\Omega,N)|\nabla {u}|)dx\leq (c_{m,\Delta_2})^k\int_{\Omega_1} m(|\nabla {u}|)dx.\] 

\medskip

{\bf Step 2.} Let us consider now an open set $\wt{\Omega}$, such that $\overline{\Omega}\subset\wt{\Omega}\subset [-\frac{1}{4},\frac{1}{4}]^N$. Step~1. provides that for $u\in C_0^\infty(\wt{\Omega})$ we have \begin{equation}
\label{inq:m-poi} \|m(|u|)dx\|_{L^1(\rn)}\leq C\|m(|\nabla u|)dx\|_{L^1(\rn)}.
\end{equation}
Now, we aim at showing that for each $u\in V_0^m$ the inequality also holds. Of course, each such $u$ can be regularised by convolution  with a standard mollifier $\varrho_\frac{1}{n}$
\[u_n(x):=\varrho_\frac{1}{n} * u(x),\]
where $\frac{1}{n}<\frac{1}{2}{\rm dist}(\partial \wt{\Omega},\Omega)$. Such $u_n$ is smooth and compactly supported in $\wt{\Omega}$, so we have~\eqref{inq:m-poi} for~ $u_n$. Passing to the limit with $n\to \infty$ gives $u_n\to u$ and $\nabla u_n\to\nabla u$ a.e. in~$\rn$. Then continuity of $m$ gives 
\[ m(|u_n|)\to m(|u|) \quad\text{and}\quad m(|\nabla u_n|)\to m(|\nabla u|)\quad  \text{a.e. in}\ \rn.\]

To get the strong convergence in $L^1(\Omega)$ of the sequence, we are going to apply the Vitali Convergence Theorem (Theorem~\ref{theo:VitConv}). It suffices to show uniform integrability of the sequence via condition~\eqref{uni-int-con2}.
Function $u\in W^{1,1}(\Omega)$, so $\nabla u_n=\varrho_\frac{1}{n}*\nabla u$.
The Jensen inequality implies
\[\int_{\wt{\Omega}} m(|\nabla u_n|)dx\leq\int_{\wt{\Omega}} m(|\nabla u|)dx.\]
 Observe that $t\mapsto |m(t)-1/\sqrt{\delta}|_+$ is a convex function and the Jensen inequality implies
\[\int_{\wt{\Omega}}\left(m(|\nabla u_n|)-\frac{1}{\sqrt{\delta}}\right)_+dx\leq\int_{\wt{\Omega}}\left(m(|\nabla u|)-\frac{1}{\sqrt{\delta}}\right)_+dx.\] Moreover, $m(|\nabla u|)\in L^1(\wt{\Omega})$, so for every $\ve>0$ there exists $\delta>0$, such the right-hand side above is smaller than $\ve$, i.e. condition~\eqref{uni-int-con2} is satisfied and we get uniform integrability of  $\{m(|\nabla u_n|)\}_n$. From~\eqref{inq:m-poi} we notice that 
 $m(| u|)\in L^1(\wt{\Omega})$ and due to the same arguments the sequence $\{m(| u_n|)\}_n$ is uniformly integrable.

\medskip

{\bf Step 3.} Suppose that $\Omega$ is arbitrary bounded set containing $0$. It is contained in the cube of~the~edge $D={\rm diam} \Omega$. Then $\wt{u}(x)=u\left(4Dx\right)$ has ${\rm supp}\,\wt u\subset \Omega_1\subset \left[-\frac{1}{4},\frac{1}{4}\right]^N.$ We have
\[\int_\Omega m(|u|)dx=(4D)^N\int_{\Omega_1} m(|\wt{u}|)dx\leq (4D)^NC\int_{\Omega_1} m(|\nabla\wt{u}|)dx=C\int_{\Omega} m(4D|\nabla {u}|)dx.\]
Moreover, we estimate the right-hand side as in Step~1 in order to put a constant outside the~integral and the claim follows for such $\Omega$. To obtain it on an arbitrary domain we need only to observe that the Lebesgue measure is translation-invariant.
\end{proof}

\bibliographystyle{plain}
\bibliography{gszg-arxiv.bib}

\end{document}